\DeclareMathOperator{\conv}{Conv}
\DeclareMathOperator{\weak}{Weak}
\DeclareMathOperator{\Bot}{bot} 
\DeclareMathOperator{\Top}{top}
\DeclareMathOperator{\fl}{fl}
\DeclareMathOperator{\asc}{asc}
\DeclareMathOperator{\argmax}{argmax}
\DeclareMathOperator{\Perm}{Perm}
\newcommand{\ph}{\varphi}
\newcommand{\bs}{\boldsymbol}
\renewcommand{\tilde}{\widetilde}
\newcommand{\R}{\mathbb{R}}
\newcommand\precdot{\mathrel{\ooalign{$\prec$\cr
  \hidewidth\raise0.001ex\hbox{$\cdot\mkern0.6mu$}\cr}}}
\newtheorem{theorem}{Theorem}[section]
\newtheorem{def-prop}[theorem]{Definition-Proposition}
\newtheorem{prop}[theorem]{Proposition}
\newtheorem{lemma}[theorem]{Lemma}
\newtheorem{cor}[theorem]{Corollary}
\newtheorem{introthm}{Theorem}
\theoremstyle{definition}
\newtheorem{ex}[theorem]{Example}
\newtheorem{defin}[theorem]{Definition}
\theoremstyle{remark}
\newtheorem*{remark}{Remark}
\newcommand*{\da@rightarrow}{\mathchar"0\hexnumber@\symAMSa 4B }
\newcommand*{\da@leftarrow}{\mathchar"0\hexnumber@\symAMSa 4C }
\newcommand*{\xdashrightarrow}[2][]{%
  \mathrel{%
    \mathpalette{\da@xarrow{#1}{#2}{}\da@rightarrow{\,}{}}{}%
  }%
}
\newcommand{\xdashleftarrow}[2][]{%
  \mathrel{%
    \mathpalette{\da@xarrow{#1}{#2}\da@leftarrow{}{}{\,}}{}%
  }%
}
\newcommand*{\da@xarrow}[7]{%
  \sbox0{$\ifx#7\scriptstyle\scriptscriptstyle\else\scriptstyle\fi#5#1#6\m@th$}%
  \sbox2{$\ifx#7\scriptstyle\scriptscriptstyle\else\scriptstyle\fi#5#2#6\m@th$}%
  \sbox4{$#7\dabar@\m@th$}%
  \dimen@=\wd0 %
  \ifdim\wd2 >\dimen@
    \dimen@=\wd2 %
  \fi
  \count@=2 %
  \def\da@bars{\dabar@\dabar@}%
  \@whiledim\count@\wd4<\dimen@\do{%
    \advance\count@\@ne
    \expandafter\def\expandafter\da@bars\expandafter{%
      \da@bars
      \dabar@ 
    }%
  }%
  \mathrel{#3}%
  \mathrel{%
    \mathop{\da@bars}\limits
    \ifx\\#1\\%
    \else
      _{\copy0}%
    \fi
    \ifx\\#2\\%
    \else
      ^{\copy2}%
    \fi
  }%
  \mathrel{#4}%
}
\title{One-skeleton posets of Bruhat interval polytopes}
\author{Christian Gaetz}
\thanks{The author is supported by a Klarman Postdoctoral Fellowship at Cornell University.}
\address{Department of Mathematics, Cornell University, Ithaca, NY.}
\email{\href{mailto:crgaetz@gmail.com}{{\tt crgaetz@gmail.com}}}
\date{\today}
\begin{document}
\begin{abstract}
Introduced by Kodama and Williams, \emph{Bruhat interval polytopes} are generalized permutohedra closely connected to the study of torus orbit closures and total positivity in Schubert varieties. We show that the 1-skeleton posets of these polytopes are lattices and classify when the polytopes are simple, thereby resolving open problems and conjectures of Fraser, of Lee--Masuda, and of Lee--Masuda--Park. In particular, we classify when generic torus orbit closures in Schubert varieties are smooth.
\end{abstract}
\keywords{Bruhat interval polytope, weak order, Bruhat order, lattice, Schubert variety, torus orbit, smooth, bridge polytope}
\maketitle

\section{Introduction}

\subsection{Bruhat interval polytopes}
For a permutation $w$ in $S_n$, write $\bs{w}$ for the vector $(w^{-1}(1),\ldots,w^{-1}(n)) \in \R^n$. The \emph{Bruhat interval polytope} $Q_w$ is defined as the convex hull:
\[
Q_w \coloneqq \conv(\{\bs{u} \mid u \preceq w \}) \subset \R^n,
\]
where $\preceq$ denotes Bruhat order on $S_n$ (see Section~\ref{sec:background}). Bruhat interval polytopes were introduced by Kodama and Williams in \cite{kodama-williams}, where it is shown that they are the images under the \emph{moment map} of the \emph{Schubert variety} $X_w$ in the flag variety, and also of the \emph{totally positive part} $X_w^{\geq 0}$ of the Schubert variety. Therefore, the combinatorics of $Q_w$ encodes information about the actions of the torus and positive torus on $X_w$ and $X_w^{\geq 0}$ respectively.

The combinatorics of $Q_w$ was studied further by Tsukerman and Williams \cite{Tsukerman-Williams}, who showed that $Q_w$ is a \emph{generalized permutohedron} in the sense of Postnikov \cite{postnikov-beyond} and the matroid polytope of a flag \emph{positroid}. Additional connections to the geometry of matroids were made in \cite{boretsky-eur-williams}, and Bruhat interval polytopes have also appeared \cite{Williams} in the context of BCFW-bridge decompositions \cite{arkani-hamed_bourjaily_cachazo_goncharov_postnikov_trnka_2016} from physics, and in the study \cite{lee-conjecture, lee-retractions, lee-toric-bip, lee-poincare} of \emph{generic torus orbit closures} $Y_w$ in $X_w$.

\subsection{The 1-skeleton of $Q_w$ as a lattice}
\label{sec:intro-lattice}
Throughout this work, we study the \emph{1-skeleton poset} $P_w$ of $Q_w$, a partial order on the lower Bruhat interval $[e,w]=\{u \mid u \preceq w\}$.

\begin{defin}
\label{def:Pw}
The poset $(P_w, \leq_w)$ has underlying set the Bruhat interval $[e,w]$ and cover relations $u \lessdot_w v$ whenever $Q_w$ has an edge between vertices $\bs{u}$ and $\bs{v}$ and $\ell(v)>\ell(u)$, where $\ell$ denotes Coxeter length. See Figure~\ref{fig:poset-example} for an example.
\end{defin}

\begin{figure}
    \centering
    \includegraphics{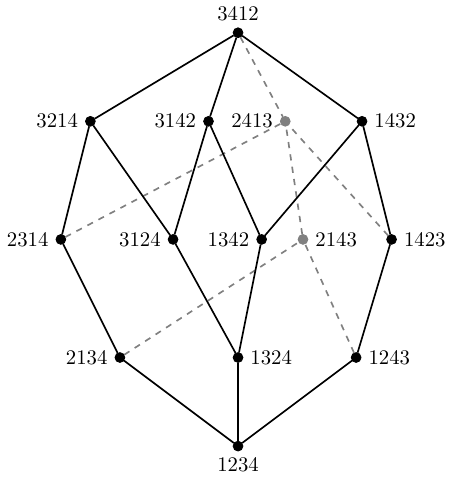}
    \caption{The Hasse diagram of the poset $P_{3412}$. The facial structure of the polytope $Q_{3412}$ may be seen by viewing the black vertices and edges as the ``front" and the gray ones as the ``back".}
    \label{fig:poset-example}
\end{figure}

When $w=w_0$ is the \emph{longest permutation}, the polytope $Q_w$ is the \emph{permutohedron}, a fundamental object in algebraic combinatorics, and the poset $P_w$ is the very well-studied \emph{right weak order} (see Section~\ref{sec:background}). For general $w$, since edges of $Q_w$ must be Bruhat covers by \cite[Thm.~4.1]{Tsukerman-Williams}, the order $\leq_w$ is intermediate in strength between right weak order and Bruhat order on $[e,w]$.

Since the work of Bj\"{o}rner \cite[Thm.~8]{bjorner-lattice} it has been known that the weak order $P_{w_0}$ on $S_n$ is a \emph{lattice}; in our first main theorem, we generalize this to all of the posets $P_w$.

\begin{introthm}[Proven as Theorem~\ref{thm:bip-poset-is-lattice}]
\label{thm:intro-lattice}
Let $w \in S_n$, then $P_w$ is a lattice.
\end{introthm}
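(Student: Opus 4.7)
My plan is to prove the lattice property of $P_w$ by induction on $\ell(w)$, with the base case $w = e$ trivial. Because $P_w$ is finite and bounded --- the minimum is $e$, and a standard perturbation argument shows the maximum is $w$ (edges of $Q_w$ change length by exactly one by \cite{Tsukerman-Williams}, so any linear functional closely approximating $\ell$ has $\bs{w}$ as its unique maximum on $Q_w$) --- it suffices to show that $P_w$ is a meet-semilattice: every pair $u, v \in [e, w]$ admits a meet in $\leq_w$.

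For the inductive step, I would choose a simple reflection $s_i$ that is a right descent of $w$, set $w' = w s_i$ so that $\ell(w') = \ell(w) - 1$, and describe $Q_w$ recursively in terms of $Q_{w'}$ via a \emph{bridge} construction, motivated by the paper's keyword ``bridge polytope'' and the BCFW framework of \cite{Williams, arkani-hamed_bourjaily_cachazo_goncharov_postnikov_trnka_2016}. Concretely, I would partition $[e, w]$ into $A = [e, w']$ and $B = [e, w] \setminus A$, use the parabolic decomposition of Bruhat intervals to show that right-multiplication by $s_i$ maps $B$ into $A$, and then invoke the Tsukerman-Williams classification of edges of $Q_w$ together with the Kodama-Williams moment-map description to identify which edges of $Q_w$ lie within $A$, within $B$, and between the two pieces --- the ``bridge'' edges of the form $\{u, us_i\}$.

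To construct the meet of $u, v \in [e, w]$, the candidate is obtained by right-multiplying by $s_i$ when necessary so that both lie in $A$, applying the inductive lattice structure on $P_{w'}$, and then lifting back via $s_i$ if appropriate. I expect the main obstacle to be compatibility: verifying that the resulting element is indeed $\leq_w u$ and $\leq_w v$ in $P_w$ (not merely in the restricted order on $[e, w']$) and that it dominates every common lower bound. This reduces to controlling the ``diagonal'' edges of $Q_w$ --- edges which are Bruhat covers but not weak-order covers --- via the Tsukerman-Williams edge criterion, and checking that the bridge projection and lifting commute appropriately with $\leq_w$. As an alternative route I would also consider, one might seek a direct combinatorial characterization of $\leq_w$ --- perhaps via a refined, $w$-dependent inversion-set invariant --- for which the lattice property reduces to a purely set-theoretic closure statement, bypassing the inductive gluing entirely.
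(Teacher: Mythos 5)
Your inductive plan has a genuine gap: the polytope $Q_{w'}$ for $w'=ws_i$ is \emph{not} a face of $Q_w$, and consequently the poset $P_{w'}$ is \emph{not} the restriction of $\leq_w$ to $[e,w']$ --- it typically has strictly more relations. A minimal example: take $w=321=w_0$ in $S_3$ and $s_i=s_1$, so $w'=w s_1 = 231$. Then $Q_{231}$ is a quadrilateral with an edge joining $\bs{132}=(1,3,2)$ and $\bs{231}=(3,1,2)$, so $132\lessdot_{w'}231$ in $P_{w'}$. But that segment is a \emph{diagonal} of the hexagon $Q_{321}$, not an edge, and indeed $132\not\leq_R 231$ in weak order $P_{321}$; restricting $P_{321}$ to $\{123,213,132,231\}$ one gets a ``fork'' with $132$ and $231$ incomparable, whereas $P_{231}$ is a diamond. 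So a meet computed in $P_{w'}$ by the inductive hypothesis need not be the meet in $P_w$, and the ``compatibility'' obstacle you flag is not a technical nuisance but the point where the recursion breaks. The partition of $[e,w]$ into $A=[e,w']$ and $B$ and the fact that $B\cdot s_i\subseteq A$ are both correct (and standard via the lifting property), but they give a decomposition of the \emph{vertex set}, not of the polytope or its $1$-skeleton; the edges among vertices of $A$ inside $Q_w$ are not the edges of $Q_{w'}$.

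Your ``alternative route'' is much closer to what the paper actually does. The paper does \emph{not} induct on length; instead it realizes $P_w$ as a quotient of weak order by the equivalence $\Theta_w$ coming from the normal fan of $Q_w$ (since $Q_w$ is a generalized permutohedron). It shows each class $[x]_w$ is a weak-order interval $[\Bot_w(x),\Top_w(x)]_R$, that $\Top_w$ is order-preserving (Theorem~\ref{thm:top-is-order-preserving}, driven by Proposition~\ref{prop:v-paths-to-u-paths} about the graphs $\Gamma_w(u)$), and then exhibits the join directly as $u\lor_w v=\Bot_w(\Top_w(u)\lor_R\Top_w(v))$. This is exactly the ``refined, $w$-dependent inversion-set invariant'' you gesture at: $u\leq_w v$ iff $T_L(\Top_w(u))\subseteq T_L(\Top_w(v))$, and $T_L(\Top_w(u))$ is read off from $\Gamma_w(u)$ via equation~\eqref{eq:inversions-of-top-from-graph}. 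Note also that the paper constructs the \emph{join}, not the meet --- this is not a cosmetic choice, since $\Theta_w$ respects $\lor_R$ but not $\land_R$, so the natural semilattice structure to propagate from weak order is the join one. If you wish to salvage a bridge-type induction, you would need an honest recursive description of the normal fan of $Q_w$ in terms of that of $Q_{w'}$, which is substantially harder than the vertex-set partition and is not what the BCFW picture in \cite{Williams} supplies.
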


As we explain in the remainder of Section~\ref{sec:intro-lattice}, special cases of this lattice structure confirm a conjecture of Fraser \cite[Rmk.~3.7]{fraser2020cyclic}, imply new properties of $Q_w$, and suggest interesting directions for future work.

\subsubsection{BCFW-bridge decompositions}
In the last decade, there has been an explosion of work (see \cite{arkani-hamed_bourjaily_cachazo_goncharov_postnikov_trnka_2016}) relating the physical theory of \emph{scattering amplitudes} to the combinatorics and geometry of the \emph{totally nonnegative Grassmannian} $Gr(k,n)_{\geq 0}$ by way of the \emph{amplituhedron}. In this setting, \emph{on-shell diagrams} from physics correspond to \emph{reduced plabic graphs}, which give parametrizations of an important cell decomposition of $Gr(k,n)_{\geq 0}$ \cite{postnikov2006total}. 

In \cite[\S3.2]{arkani-hamed_bourjaily_cachazo_goncharov_postnikov_trnka_2016} it is shown that reduced plabic graphs for a given cell may be built up recursively using \emph{BCFW-bridge decompositions}. In \cite[Thm.~3.2]{Williams}, Williams showed that these decompositions of plabic graphs correspond to the maximal chains in $P_v$ when $v$ is a Grassmannian permutation, and that this is analogous to the fact that reduced words for the longest permutation $w_0$ correspond to maximal chains in $P_{w_0}$ (weak order). Since weak order is a lattice, Theorem~\ref{thm:intro-lattice} extends this analogy and implies new structure within the set of BCFW-bridge decompositions. That $P_v$ is a lattice for Grassmannian permutations was conjectured by Fraser \cite[Rmk.~3.7]{fraser2020cyclic}. Fraser also conjectured that a larger class of posets, which are not necessarily the 1-skeleton posets of any polytope, are lattices; this problem remains open. 

\subsubsection{Quotients of weak order}
Theorem~\ref{thm:intro-lattice} is proven by realizing $P_w$ as a quotient of weak order $P_{w_0}$ by an equivalence relation $\Theta_w$ which respects the weak order join operation (but does \emph{not} respect the meet operation!) Thus $P_w$ is a \emph{semilattice quotient} of $P_{w_0}$ but not a \emph{lattice quotient}. There are families of very important lattice congruences on weak order \cite{reading-cambrian-lattices, reading-speyer} and lattice quotients and lattice homomorphisms of weak order have been classified \cite{reading-noncrossing, reading-homomorphisms}. This work thus suggests that semilattice quotients and homomorphisms of weak order are an intriguing topic for further study.

\subsubsection{The parabolic map and the mixed meet}
Let $S_n(I)$ denote the Young subgroup of $S_n$ generated by a subset $I$ of the simple reflections. Billey, Fan, and Losonczy proved \cite[Thm.~2.2]{billey-parabolic} that for any $w \in S_n$ the set $S_n(I) \cap [e,w]$ has a unique maximal element $m(w,I)$ under Bruhat order; the map $w \mapsto m(w,I)$ is called the \emph{parabolic map}. Richmond and Slofstra \cite[Thm.~3.3 \& Prop.~4.2]{Richmond-fiber-bundle} showed that this element $m(w,I)$ determines whether the projection of the Schubert variety $X_w \subset G/B$ to a partial flag variety $G/P$ is a fiber bundle, and is thus important for understanding the singularities of $X_w$. We apply Theorem~\ref{thm:intro-lattice} to show in Theorem~\ref{thm:join-of-atoms} that the element $m(w,I)$ is just the join in $P_w$ of the simple reflections from $I$, demonstrating the richness of the lattice structure on $P_w$.

A related operation of \emph{mixed meet} was studied by Bump and Chetard in \cite[Thm.~3]{bump2021matrix} in relation to certain intertwining operators of representations of reductive groups over nonarchimedean local fields. The mixed meet of $u,v \in S_n$ is the unique Bruhat maximal permutation in $[e,u]_R \cap [e,v]$. In the language of Section~\ref{sec:lattice}, this element is $\Bot_v(u)$, the unique minimal element under $\leq_R$ in the equivalence class of $u$ under the equivalence relation $\Theta_v$ induced on $S_n$ by the normal fan of $Q_v$. This element is a translate of $\mu_v(u)$, where $\mu_v$ is the \emph{matroid map} obtained by viewing $[e,v]$ as a Coxeter matroid in the sense of \cite{coxeter-matroids}.

\subsubsection{The non-revisiting path property}
A polytope $Q$ has the \emph{non-revisiting path property} if no shortest path in its 1-skeleton between two vertices returns to a face after having left it. This property has long been of interest in the field of combinatorial optimization. In \cite{Hersh}, Hersh conjectures that any simple polytope whose 1-skeleton poset is a lattice has the non-revisiting path property, and proves several weaker properties satisfied by such polytopes. Thus, combining Theorem~\ref{thm:intro-lattice} and the classification of simple Bruhat interval polytopes in Theorem~\ref{thm:intro-smooth} below, we obtain a rich new family of examples to which Hersh's conjecture and results apply. Additionally, in Section~\ref{sec:directionally-simple} we observe that all polytopes $Q_w$ are \emph{directionally} simple. It is thus natural to ask: can Hersh's conjecture and results be extended to the class of directionally simple polytopes?

\subsection{Bruhat interval polytopes and generic torus orbit closures}

\subsubsection{Simple Bruhat interval polytopes and smooth torus orbit closures}
Let $G=GL_n(\mathbb{C})$, let $B$ denote the Borel subgroup of upper triangular matrices, and let $T$ denote the maximal torus of diagonal matrices. The \emph{flag variety} $Fl_n=G/B$ and its Schubert subvarieties $X_w \coloneqq \overline{BwB/B}$ are of fundamental importance in many areas of algebraic combinatorics, algebraic geometry, and representation theory. The torus $T$ acts naturally on $G/B$ via left multiplication, and the fixed points $(G/B)^T$ are the points $wB$ for $w \in S_n$, where we identify $w$ with its permutation matrix. The fixed points of the Schubert variety $X_w$ are $\{uB \mid u \preceq w\}$.

Torus orbits in $G/B$ and their closures are a rich family of varieties, studied since Klyachko \cite{Klyachko} and Gelfand--Serganova \cite{Gelfand-serganova} with close connections to matroids and Coxeter matroids \cite{coxeter-matroids}. One class of torus orbit closures has received considerable interest \cite{lee-conjecture, lee-retractions, lee-toric-bip, lee-poincare} of late: \emph{generic} torus orbit closures in Schubert varieties. A torus orbit closure $Y \subset X_w$ is called generic if $Y^T=X_w^T$; we write $Y_w$ for any generic torus orbit closure in $X_w$.

One of the main properties of interest for torus orbits in the flag variety has historically been their singularities \cite{Carrell-normality, Carrell-smooth-point} and in particular determining when they are smooth. For Schubert varieties themselves, smoothness was famously characterized by Lakshmibai--Sandhya \cite[Thm.~1]{Lakshmibai-sandhya} in terms of permutation pattern avoidance. In our next main theorem, we resolve a conjecture of Lee and Masuda \cite[Conj.~7.17]{lee-conjecture} by classifying when $Y_w$ is smooth.

\begin{introthm}[Conj.~7.17 of Lee--Masuda \cite{lee-conjecture}; Proven below as Corollary~\ref{cor:smooth-conjecture}]
\label{thm:intro-smooth}
Let $w \in S_n$, then $Q_w$ is a simple polytope if and only if it is simple at the vertex $\bs{w}$; equivalently, $Y_w$ is a smooth variety if and only if it is smooth at the point $wB$.
\end{introthm}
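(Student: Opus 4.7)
The equivalence between smoothness of $Y_w$ at the $T$-fixed point $uB$ and simplicity of $Q_w$ at the vertex $\bs{u}$ is standard for toric varieties: $Y_w$ is a toric variety with moment polytope $Q_w$, and smoothness of a toric variety is detected at its $T$-fixed points, where it is equivalent to local simplicity of the moment polytope. This reduces Theorem~\ref{thm:intro-smooth} to the polytopal statement that $Q_w$ is simple if and only if $Q_w$ is simple at $\bs{w}$.

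One direction is trivial. For the converse, assume $Q_w$ is simple at $\bs{w}$ and set $d \coloneqq \dim Q_w$. Since $w$ is Bruhat-maximal and hence the top of the lattice $P_w$ (Theorem~\ref{thm:intro-lattice}), no elements cover $w$, and so the $d$ edges at $\bs{w}$ correspond exactly to the coatoms of $P_w$; that is, $P_w$ has precisely $d$ coatoms. The task is to show that for every $u \in [e,w]$, the total number of $P_w$-neighbors of $u$ (elements covering, or covered by, $u$) equals $d$. To this end I would exploit the realization of $P_w$ as the semilattice quotient $P_{w_0}/\Theta_w$ (foreshadowed in the introduction and established in the proof of Theorem~\ref{thm:intro-lattice}), where $\Theta_w$ is the equivalence relation induced on weak order by the normal fan of $Q_w$. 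Each element of $P_{w_0}$ has exactly $n-1$ weak-order neighbors; under the quotient, the neighbors of $[u]$ in $P_w$ correspond to the simple transpositions that ``exit'' the $\Theta_w$-class of $u$. At the singleton class $\{w\}$, all $n-1$ simple transpositions exit, and by hypothesis these produce exactly $d$ distinct coatom classes.

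The plan is then to show, by descending induction on $\ell(u)$ along covers in $P_w$, that every $\Theta_w$-class produces exactly $d$ neighbors, by arguing that the local structure at an edge $\bs{u}\bs{v}$ allows one to ``transport'' the neighbor count across the edge: passing from $[v]$ to $[u]$ should swap out one exit simple transposition for the one corresponding to the edge $\bs{u}\bs{v}$ itself, keeping the total fixed. The main obstacle is formalizing this transport, since a priori the quotient map could create or destroy exits when moving between classes of differing sizes. I expect the essential input to be the concrete description of $\Theta_w$-classes via the $\Bot_w$ operator highlighted in the introduction, which should give explicit combinatorial control over the exit transpositions and permit a local comparison of the normal cone at $\bs{u}$ to the simplicial normal cone at $\bs{w}$ guaranteed by the hypothesis.
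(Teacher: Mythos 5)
Your reduction of the geometric statement to the polytopal one is essentially what the paper does (it is carried out in \cite{lee-conjecture}), and identifying the edges at $\bs{w}$ with the coatoms of $P_w$ is correct. The gap is in the proposed downward induction. You want to argue that the vertex degree of $Q_w$ is \emph{locally constant} along edges (``swap out one exit simple transposition\ldots keeping the total fixed''), but this is false in general: when $Q_w$ is not simple, the degree genuinely varies between vertices, so the ``transport'' map you describe cannot be a bijection on edges at covering pairs. What is actually true, and what the paper establishes (Theorem~\ref{thm:injection}, by constructing an explicit edge-by-edge injection $E_w(u) \hookrightarrow E_w(v)$ for each cover $u \lessdot_w v$ in $P_w$, via a lengthy case analysis on the graphs $\Gamma_w(\cdot)$), is only the \emph{inequality} $\deg_w(\bs{u}) \leq \deg_w(\bs{v})$ whenever $u \leq_w v$ (Theorem~\ref{thm:degree-monotonicity}). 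You cannot upgrade this to an equality inside the induction without circularity, since equality at every step is exactly the conclusion you are trying to reach.

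You are also missing the second endpoint that makes the monotonicity argument close. The paper observes (Corollary~\ref{cor:simple-conjecture}) that $Q_w$ is \emph{always} simple at the bottom vertex $\bs{e}$: from the definition, $E_w(e)$ consists precisely of the simple reflections $\preceq w$, and this count equals $\dim Q_w$. With $\deg_w(\bs{e}) = d = \deg_w(\bs{w})$ and degree monotonicity along $\leq_w$, every vertex has degree exactly $d$. So the two missing ingredients are: (i) replace the constancy claim by the injection/inequality, which is the technical heart of the argument; (ii) note the universal simplicity at $\bs{e}$. As a minor correction, the class $[w]_w$ need not be a singleton when $\dim Q_w < n-1$; by Proposition~\ref{prop:class-has-min-max-elements} it is the weak-order interval $[w, \Top_w(w)]_R$.
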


Theorem~\ref{thm:intro-smooth} is proven by showing (see Theorem~\ref{thm:degree-monotonicity}) that the degree of a vertex of $Q_w$ is an ordering preserving function of the poset $P_w$.

By \cite[Cor.~7.13]{lee-conjecture}, the condition that $Y_w$ is smooth at $wB$ can be checked combinatorially by determining whether a certain graph $\Gamma_w(w)$ is a tree (see Section~\ref{sec:gamma-graphs}). This tree condition has in turn been characterized combinatorially in terms of pattern avoidance \cite[Thm.~1.1]{bosquet-melou-forest}, and shown \cite[Prop.~2]{woo-yong-gorenstein} to characterize when $X_w$ is \emph{locally factorial}. By work of Bj\"{o}rner--Ekedahl \cite[Thm.~D]{Bjorner-ekedahl} it is also equivalent to the vanishing of the coefficient of $q$ in the associated \emph{Kazhdan--Lusztig polynomial} \cite{kazhdan--lusztig} and thus \cite{kazhdan--lusztig2} the vanishing of a certain middle intersection cohomology group of $X_w$. It would be fascinating to give a purely geometric explanation for the equivalence (by Theorem~\ref{thm:intro-smooth}) of the smoothness of $Y_w$ with these other geometric conditions on $X_w$. 

While by Theorem~\ref{thm:intro-smooth} the smoothness of $Y_w$ is determined at the ``top" torus fixed point $wB$, the smoothness of $X_w$ is known to be determined at the ``bottom" fixed point $eB$. It would also be interesting to give a geometrically natural explanation for this discrepancy. 

\subsubsection{Directionally simple polytopes and $h$-vectors}

In Section~\ref{sec:directionally-simple} we show that, even when $Q_w$ is not a simple polytope, it is still \emph{directionally simple} (see Definition~\ref{def:directionally-simple}). This fact was also shown in \cite[Prop.~4.5]{lee-poincare} by an involved calculation, but follows directly from our results realizing $P_w$ as a quotient of weak order. This property of $Q_w$ implies that its \emph{h-vector} has positive entries which count certain permutations according to their number of ascents. In Proposition~\ref{prop:smooth-if-palindromic} we resolve an open problem of Lee--Masuda--Park \cite[Prob.~6.1]{lee-survey} by showing that $Y_w$ is smooth if and only if this $h$-vector is palindromic.

\subsubsection{Generalizations to other Bruhat intervals}

Kodama and Williams \cite{kodama-williams} in fact defined Bruhat interval polytopes 
\[
Q_{w',w} \coloneqq \conv(\{\bs{u} \mid w' \preceq u \preceq w \}) \subset \R^n,
\]
for any $w' \preceq w$, generalizing the case $w'=e$ on which we focus in this work. It is natural to ask to what extent the results of this paper can be generalized. The lattice property of Theorem~\ref{thm:intro-lattice} fails for $P_{w',w}$ with $w'=12435$ and $w=35142$, and Theorem~\ref{thm:intro-smooth} also fails to generalize, even in $S_4$. It is possible, though, that both results hold for the class of $Q_{w',w}$ which are simple at $\bs{w'}$ (this includes all $Q_w=Q_{e,w}$). This class of Bruhat interval polytopes is notable for its applications \cite[Prop.~4.4]{barkley2023combinatorial} to the Combinatorial Invariance Conjecture for Kazhdan--Lusztig polynomials.

\subsection{Outline}

Section~\ref{sec:background} contains standard background material on the weak and strong Bruhat orders. In Section~\ref{sec:gamma-graphs} we recall results from \cite{lee-conjecture} relating edges of $Q_w$ to certain directed graphs $\Gamma_w(u)$ and $\tilde{\Gamma}_w(u)$. We establish new combinatorial properties of these graphs, notably Proposition~\ref{prop:v-paths-to-u-paths}, which form the basis for the main results of the paper. In Section~\ref{sec:lattice} we give several new characterizations of the poset $P_w$ and prove Theorem~\ref{thm:intro-lattice} and note several of its consequences.  These results are then applied in Section~\ref{sec:directionally-simple} to reprove the directional simplicity of $Q_w$ and to resolve an open problem posed in \cite{lee-survey}. Finally, in Section~\ref{sec:monotone} we pull together all of our understanding of $\Gamma_w(u)$ and $P_w$ to prove a strengthened version of Theorem~\ref{thm:intro-smooth}.

An extended abstract describing part of this work appears in the proceedings of FPSAC 2023 \cite{fpsac-abstract}.

\section{Background on the weak and strong Bruhat orders}
\label{sec:background}
We refer the reader to \cite{bjorner-brenti} for basic definitions and results on Coxeter groups and the Bruhat and weak orders on them.

We view the symmetric group $S_n$ as a Coxeter group with simple generators $\{s_1,\ldots,s_{n-1}\}$, where $s_i\coloneqq (i \: i+1)$ is an adjacent transposition. An expression $w=s_{i_1}\cdots s_{i_{\ell}}$ of minimal length is a \emph{reduced word} for $w$ and in this case the quantity $\ell=\ell(w)$ is the \emph{length} of $w$. There are three important partial orders on $S_n$, each graded by length. The \emph{right weak order} $\leq_R$ by definition has cover relations $w \lessdot_R ws$ whenever $s$ is a simple generator and $\ell(ws)=\ell(w)+1$; the \emph{left weak order} $\leq_L$ is defined analogously, but with left-multiplication by $s$. The \emph{(strong) Bruhat order} $\preceq$ has cover relations $w \precdot wt$ whenever $\ell(wt)=\ell(w)+1$ and $t$ lies in the set $T$ of transpositions $(ij)$. We write $[v,w]_R$ and $[v,w]$ for the closed interval between $v,w$ in right weak and Bruhat order respectively.

The \emph{left inversions} of an element $w \in S_n$ are the reflections $T_L(w)\coloneqq \{t \in T \: | \: \ell(tw)<\ell(w)\}$ and the \emph{left descents} are $D_L(w)\coloneqq T_L(w) \cap \{s_1,\ldots, s_{n-1}\}$; the \emph{right inversions} and \emph{right descents} are defined analogously, using instead right multiplication by $t$. It is a well-known fact that weak order is characterized by containment of inversion sets:

\begin{prop}[Cor.~3.1.4 of \cite{bjorner-brenti}]
\label{prop:weak-characterized-by-inversions}
Let $v,w \in S_n$, then $v \leq_L w$ if and only if $T_R(v) \subseteq T_R(w)$ and $v \leq_R w$ if and only if $T_L(v) \subseteq T_L(w)$.
\end{prop}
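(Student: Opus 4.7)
My plan is to reduce the two equivalences to a single one by symmetry and then prove that statement by induction on length. The inverse map $w \mapsto w^{-1}$ exchanges $\leq_R$ with $\leq_L$ and $T_L$ with $T_R$, so it suffices to prove the equivalence $v \leq_R w \iff T_L(v) \subseteq T_L(w)$. The forward direction is immediate from the prefix characterization of right weak order: if $v \leq_R w$, I extend a reduced word for $v$ to a reduced word $w = s_{i_1} \cdots s_{i_\ell}$ and invoke the standard description
\[
T_L(w) = \{(s_{i_1} \cdots s_{i_{k-1}})\, s_{i_k}\, (s_{i_{k-1}} \cdots s_{i_1}) : 1 \leq k \leq \ell\}
\]
found in \cite{bjorner-brenti}; restricting to the first $\ell(v)$ terms gives $T_L(v) \subseteq T_L(w)$.

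For the converse I would induct on $\ell(v)$, the base case $v=e$ being trivial. In the inductive step, fix any $s \in D_L(v)$; since $s \in T_L(v) \subseteq T_L(w)$, it is also a left descent of $w$. Beginning reduced words for $v$ and for $w$ with the letter $s$ and applying the explicit description of $T_L$ above yields the conjugation identity $T_L(sv) = s\bigl(T_L(v) \setminus \{s\}\bigr)s$, and the analogous identity for $w$. Together these give $T_L(sv) \subseteq T_L(sw)$, so by induction $sv \leq_R sw$. Any reduced word for $sv$ that is a prefix of a reduced word for $sw$ yields, upon prepending the letter $s$ to both, a reduced word for $v$ that is a prefix of a reduced word for $w$, hence $v \leq_R w$.

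The step requiring the most care is the final promotion from $sv \leq_R sw$ to $v \leq_R w$: in general, left multiplication by a simple reflection does not preserve right weak order, so it is essential that $s$ be simultaneously a left descent of both $v$ and $w$. The conjugation identity for $T_L$ is the other key input, but it is a direct consequence of the reduced-word description of left inversions, so the main obstacle is really just arranging the induction so that a common left descent is always available --- which the hypothesis $T_L(v) \subseteq T_L(w)$ automatically guarantees once $\ell(v) > 0$.
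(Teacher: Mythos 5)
Your proof is correct. The paper does not prove this proposition; it is stated as standard background and implicitly deferred to Bj\"orner--Brenti, where essentially the same inductive argument on $\ell(v)$ appears. Your reduction by $w \mapsto w^{-1}$, the reduced-word description of $T_L$, the conjugation identity $T_L(sv) = s\bigl(T_L(v)\setminus\{s\}\bigr)s$ for $s \in D_L(v)$, and the final lifting of $sv \leq_R sw$ to $v \leq_R w$ by prepending the common left descent $s$ are all sound; the one place a reader might pause --- that the induction really terminates and that $s$ is always available --- you address explicitly, and correctly, since $T_L(v)\subseteq T_L(w)$ forces any simple $s\in D_L(v)$ to lie in $D_L(w)$ as well.
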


The \emph{(positive) root} associated to the reflection $(ab)$ with $a<b$ is the vector $e_a-e_b$, where the $e_i$ are the standard basis vectors in $\mathbb{R}^n$. We write $\Phi^+$ for the set $\{e_i-e_j \mid 1\leq i < j \leq n\}$ of all positive roots. A set $A \subseteq \Phi^+$ is called \emph{closed} if $\alpha+\beta \in A$ whenever $\alpha,\beta \in A$ and $\alpha+\beta \in \Phi^+$; it is \emph{coclosed} if $\Phi^+ \setminus A$ is closed, and \emph{biclosed} if it is both closed and coclosed. The following well-known fact can be easily verified.

\begin{prop}
A set $T' \subseteq T$ of reflections is the inversion set of some permutation if and only if the associated set $\{e_a-e_b \mid (ab) \in T', a<b\}$ is biclosed.
\end{prop}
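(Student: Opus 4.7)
The plan is to prove both directions directly from the combinatorial description of inversions. For the ``only if'' direction, I would work with the characterization $(a\,b) \in T_L(w) \iff w^{-1}(a) > w^{-1}(b)$ (for $a<b$). If $(a\,b)$ and $(b\,c)$ both lie in $T_L(w)$ with $a<b<c$, then the chain $w^{-1}(a) > w^{-1}(b) > w^{-1}(c)$ forces $(a\,c) \in T_L(w)$; since $e_a - e_c = (e_a - e_b)+(e_b - e_c)$ and in type $A$ this is the only way for one positive root to arise as a sum of two others, this check exhausts closedness. Coclosedness follows by running the same chain of inequalities on the complement.

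For the ``if'' direction, given a biclosed $T' \subseteq T$, I would construct the desired permutation by first building a total order on $\{1,\ldots,n\}$ and reading $w$ off from it. Define a relation $\prec$ on $\{1,\ldots,n\}$ by setting $a \prec b$ iff either $a<b$ and $(a\,b) \notin T'$, or $a>b$ and $(b\,a) \in T'$; this is trichotomous by construction. The key step is to verify transitivity: given $a \prec b \prec c$, I would split into the six possible $<$-orderings of $\{a,b,c\}$ and in each case deduce $a \prec c$ from a single application of either closedness or coclosedness of $T'$. Once $\prec$ is established as a total order, I would let $w$ be the unique permutation for which $w^{-1}(a)$ equals the $\prec$-rank of $a$; unwinding the definition and invoking Proposition~\ref{prop:weak-characterized-by-inversions} gives $T_L(w) = T'$.

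The main obstacle is the six-case transitivity verification. While routine, it requires some care: in several cases (for instance $a<c<b$, with the assumptions $(a\,b) \notin T'$ and $(c\,b) \in T'$) one cannot apply coclosedness directly and must instead argue by contradiction using closedness — supposing $(a\,c) \in T'$ and combining with $(c\,b) \in T'$ would force $(a\,b) \in T'$, contradicting the assumption. This bookkeeping is the substantive content of the argument; every other step — trichotomy of $\prec$, extracting $w$ from the total order, and recovering $T'$ as the inversion set — is immediate from the definitions.
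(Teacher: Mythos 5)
Your proof is correct. Note, however, that the paper does not actually supply a proof of this proposition: it is stated in Section~\ref{sec:background} as standard background, with the reader referred to Bj\"{o}rner--Brenti at the start of that section. So there is no argument in the paper to compare against; what you have written is the usual proof.

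A couple of small remarks on your write-up. In the ``only if'' direction your appeal to the type~$A$ fact that $e_i-e_j$ (for $i<j$) decomposes as a sum of two positive roots only as $(e_i-e_k)+(e_k-e_j)$ with $i<k<j$ is exactly what makes the three-index check sufficient; that observation is worth stating explicitly if you write this out in full. In the ``if'' direction, the invocation of Proposition~\ref{prop:weak-characterized-by-inversions} at the end is unnecessary: once you have the total order $\prec$, defining $w$ by $w^{-1}(a)=\operatorname{rank}_{\prec}(a)$ and unwinding $T_L(w)=\{(a\,b)\mid a<b,\ w^{-1}(a)>w^{-1}(b)\}$ together with trichotomy of $\prec$ already gives $T_L(w)=T'$; the cited proposition concerns the characterization of weak order by containment of inversion sets and plays no role here. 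Finally, the six-case transitivity check is indeed the substantive part, and each case reduces to one application of closedness or coclosedness (or a contradiction argument using the other), as you indicate; I verified all six cases go through.
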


The Bruhat order has a useful characterization in terms of reduced words:

\begin{prop}[Thm.~2.2.2 of \cite{bjorner-brenti}]
Let $v,w \in S_n$, then $v \preceq w$ if and only if every reduced word (equivalently, some reduced word) for $w$ has a subword which is a reduced word for $v$.
\end{prop}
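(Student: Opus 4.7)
The plan is to prove both implications and the equivalence of the two quantifiers (``every'' vs.\ ``some'' reduced word) separately, leaning on the Strong Exchange Condition as the main tool. The equivalence of the quantifiers is handled first: by Matsumoto/Tits, any two reduced words for $w$ are connected by braid moves $s_is_js_i \leftrightarrow s_js_is_j$ (for $|i-j|=1$) and commutations $s_is_j \leftrightarrow s_js_i$ (for $|i-j|>1$), and a short case check confirms that the property of containing a reduced word for a given $v$ as a subword is preserved under each such move (one inspects the at most three affected positions and reads off the new subword from the braid relation).

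For the forward direction ($v \preceq w$ implies the subword property for every reduced word), I would induct on $\ell(w)-\ell(v)$. The base case is trivial. Otherwise, pick a Bruhat cover $u \precdot w$ with $v \preceq u$, so $u = wt$ for some reflection $t$ with $\ell(wt)<\ell(w)$. Given any reduced word $s_{i_1}\cdots s_{i_\ell}$ for $w$, the Strong Exchange Condition applied to $t$ produces an index $j$ such that $s_{i_1}\cdots \widehat{s}_{i_j}\cdots s_{i_\ell}$ is a reduced word for $u$. By induction, this word contains a reduced word for $v$ as a subword, which remains a subword of the original reduced word for $w$.

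For the converse, I would induct on $\ell(w)$. Fix a reduced word $\mathbf{s}=s_{i_1}\cdots s_{i_\ell}$ for $w$ containing a reduced subword for $v$, and let $s = s_{i_\ell}$. If $s_{i_\ell}$ is \emph{not} used in the subword, then the subword lies inside the reduced word $s_{i_1}\cdots s_{i_{\ell-1}}$ for $ws$; by induction $v \preceq ws$, and since $ws \precdot w$ we conclude $v \preceq w$. If $s_{i_\ell}$ \emph{is} used in the subword (necessarily as its final letter), then deleting it exhibits a reduced subword for $vs$ inside the reduced word $s_{i_1}\cdots s_{i_{\ell-1}}$ for $ws$; by induction $vs \preceq ws$, and the Lifting Property of Bruhat order (if $y \preceq x$ with $\ell(ys)>\ell(y)$ and $\ell(xs)>\ell(x)$, then $ys\preceq xs$) upgrades this to $v = (vs)s \preceq (ws)s = w$.

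The main obstacle is the Lifting step in the converse: naively removing letters one at a time from a reduced word for $w$ may produce non-reduced expressions, so one cannot build a Bruhat chain $v \preceq \cdots \preceq w$ by pure subword manipulation without some extra input. Splitting on whether the final letter of the reduced word participates in the subword isolates exactly one subcase (the ``in-subword'' case) where this input is genuinely needed, and the Lifting Property---itself a standard consequence of the Strong Exchange Condition---supplies it. All other ingredients (Matsumoto, Strong Exchange, chain-of-covers definition of $\preceq$) are already available from the Coxeter-theoretic background cited in \cite{bjorner-brenti}.
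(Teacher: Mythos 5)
This proposition is stated in the paper as standard background and is not proven there---it is cited to Bj\"{o}rner--Brenti, where it appears as Theorem~2.2.2. Your argument is correct and is essentially the classical proof: Strong Exchange drives the forward direction by deleting one letter at a time down a Bruhat chain from $w$ to $v$, and the converse proceeds by induction on $\ell(w)$, splitting on whether the last letter of the chosen reduced word participates in the selected subword, with the Lifting Property supplying the needed ``$vs\preceq ws \Rightarrow v\preceq w$'' step (which, for the record, follows from the paper's stated Lifting Property applied to the pair $(vs,w)$ after noting $vs\preceq ws\prec w$). One small inefficiency: your opening step---verifying by Matsumoto/Tits that the subword property is preserved under braid moves---is superfluous, since your forward direction already yields ``every'' and your converse needs only ``some,'' so the quantifier equivalence falls out for free once both implications are established; the braid-move case analysis is also somewhat delicate to carry out rigorously (a subword using two positions of the braid block can be forced to move), so it is best omitted.
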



Given a string $v_1 \ldots v_k$ where the $v_i$ are distinct numbers from $[n]$, the \emph{flattening} $\fl(v_1\ldots v_k)$ is the permutation $v_1' \cdots v_k' \in S_k$ where $v'_i=m$ if $v_i$ is the $m$-th largest element of $\{v_1,\ldots,v_k\}$. 

\begin{prop}[follows from Thm.~2.6.3 of \cite{bjorner-brenti}]
\label{prop:flattening-bruhat}
Suppose $v,w \in S_n$ satisfy $v_i=w_i$ for $i \not \in A \subseteq [n]$. Let $v_A$ and $w_A$ be the subsequences of $v,w$ consisting of those numbers from $A$, then $v \preceq w$ if and only if $\fl(v_A) \preceq \fl(w_A)$.
\end{prop}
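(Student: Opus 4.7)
The plan is to deduce this from the Tableau Criterion for Bruhat order (a standard result from \cite{bjorner-brenti}), which states that $v \preceq w$ iff for every $j \in [n-1]$ the increasing rearrangement of $\{v_1,\ldots,v_j\}$ is entrywise dominated by that of $\{w_1,\ldots,w_j\}$. This criterion is ideally suited to the flattening setup, because agreement of $v,w$ off of $A$ means that most of each prefix will be shared between $v$ and $w$.

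Enumerating $A = \{a_1 < \cdots < a_k\}$, the hypothesis $v_i=w_i$ for $i \notin A$ forces $\{v_{a_1},\ldots,v_{a_k}\} = \{w_{a_1},\ldots,w_{a_k}\}$ as a common $k$-subset $B \subseteq [n]$, so that $v_A$ and $w_A$ are orderings of $B$ whose flattenings lie in $S_k$. For each prefix $[j]$, setting $j' = |[j] \cap A|$ and $C_j = \{v_i : i \in [j] \setminus A\} = \{w_i : i \in [j] \setminus A\}$, I would record the disjoint decompositions
\[
\{v_1,\ldots,v_j\} = C_j \sqcup \{v_{a_1},\ldots,v_{a_{j'}}\}, \qquad \{w_1,\ldots,w_j\} = C_j \sqcup \{w_{a_1},\ldots,w_{a_{j'}}\},
\]
with the first summand shared between $v$ and $w$.

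The main step I would isolate as a lemma is: for any finite $C \subseteq [n]$ and equal-size $X,Y \subseteq [n] \setminus C$, the sorted sequence of $C \cup X$ is entrywise $\leq$ that of $C \cup Y$ if and only if the sorted sequence of $X$ is entrywise $\leq$ that of $Y$. The forward (merge) direction follows once one observes that $x_l \leq y_l$ for every $l$ is equivalent to $|\{x \in X : x \leq t\}| \geq |\{y \in Y : y \leq t\}|$ for every threshold $t$, which then transfers through a merge with $C$. For the converse direction, one picks $y_l$, the $l$-th smallest element of $Y$, sitting at some position $q$ of sorted $C \cup Y$; since exactly $q-l$ elements of $C$ lie below $y_l$, and the $q$-th entry of sorted $C \cup X$ is by hypothesis $\leq y_l$, at least $l$ entries of $X$ must lie below $y_l$, giving $x_l \leq y_l$. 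This set-theoretic lemma, and in particular its converse direction, is the main obstacle; everything else is bookkeeping.

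Applying the lemma with $C = C_j$, $X = \{v_{a_1},\ldots,v_{a_{j'}}\}$, $Y = \{w_{a_1},\ldots,w_{a_{j'}}\}$ reduces the Tableau Criterion condition for $v \preceq w$ at level $j$ to the corresponding condition on these $j'$-element subsets of $B$. Because the flattening map $B \to [k]$ is order-preserving, this condition is exactly the Tableau Criterion condition for $\fl(v_A) \preceq \fl(w_A)$ at level $j'$. As $j$ ranges over $[n-1]$ the index $j'$ ranges over $\{0,\ldots,k\}$, with $j' \in \{0,k\}$ yielding vacuous conditions and $j' \in [k-1]$ yielding precisely the Tableau Criterion for $\fl(v_A) \preceq \fl(w_A)$, completing the equivalence.
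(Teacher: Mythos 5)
The paper does not actually prove this statement; it appears in Section~\ref{sec:background} as standard background, covered by the blanket citation to Bj\"orner--Brenti at the top of that section, so there is no in-paper proof to compare against. Your argument via the tableau criterion is correct and is essentially the standard derivation. The key set-theoretic lemma --- for $C$ disjoint from equal-size $X,Y$, the sorted sequence of $C\cup X$ is entrywise $\leq$ that of $C\cup Y$ if and only if sorted $X$ is entrywise $\leq$ sorted $Y$ --- is true, and both of your arguments for it (the threshold reformulation $x_\ell\leq y_\ell\ \forall\ell\Leftrightarrow|X\cap[1,t]|\geq|Y\cap[1,t]|\ \forall t$, and the position-count argument for the converse) check out. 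The bookkeeping matching prefix levels $j\in[n-1]$ for $v,w$ to prefix levels $j'\in\{0,\dots,k\}$ for $\fl(v_A),\fl(w_A)$ is handled correctly: the endpoints $j'\in\{0,k\}$ are vacuous, and for the reverse implication each $j'\in[k-1]$ is witnessed by $j=a_{j'}\in[n-1]$.

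One small remark directed at the paper rather than at you: the definition of $\fl$ immediately preceding the proposition reads ``$v_i'=m$ if $v_i$ is the $m$-th largest element,'' which would make $\fl$ order-reversing on values and would flip the direction of the Bruhat comparison; it should say ``$m$-th smallest.'' Your proof implicitly (and correctly) uses the order-preserving convention, which is exactly what makes sorted $j'$-prefixes of $v_A,w_A$ correspond to sorted $j'$-prefixes of $\fl(v_A),\fl(w_A)$.
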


The symmetric group contains a unique element $w_0$ of maximum length, and $w_0$ is the unique maximal element of $S_n$ under each of $\leq_L, \leq_R,$ and $\preceq$. In fact, in the finite case, both left and right weak order are lattices \cite[Thm.~3.2.1]{bjorner-brenti}: each pair $v,w$ of elements has a unique greatest lower bound or \emph{meet} $x \land_L y$ (resp. $x \land_R y$) under $\leq_L$ (resp. $\leq_R$) and a unique least upper bound or \emph{join} $x \lor_L y$ (resp. $x \lor_R y$).


\section{The graphs $\tilde{\Gamma}_w$ and $\Gamma_w$}
\label{sec:gamma-graphs}
\begin{defin}[Def.~7.1 of \cite{lee-conjecture}]
For $u \preceq w$, the directed graph $\tilde{\Gamma}_w(u)$ has vertex set $[n]$ with directed edges $(u(i),u(j))$ whenever $i<j$, $u(ij) \preceq w$, and $\left| \ell(u(ij))-\ell(u) \right|=1$. We write $\tilde{E}_w(u)$ for this set of edges.

The \emph{transitive reduction} of a directed graph $G$ is a directed graph $G'$ on the same vertex set with as few edges as possible, subject to the condition that there is a directed path from $v$ to $w$ in $G$ if and only if there is one in $G'$. The transitive reduction of a finite graph without directed cycles is unique. We define $\Gamma_w(u)$ to be the transitive reduction of $\tilde{\Gamma}_w(u)$, with edge set $E_w(u)$. See Example~\ref{ex:gamma-example}.
\end{defin}

\begin{figure}
    \centering
    \includegraphics{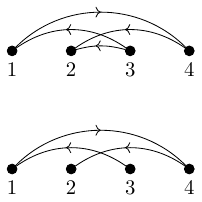}
    \caption{The graphs $\tilde{\Gamma}_{3412}(3142)$ (top) and $\Gamma_{3412}(3142)$ (bottom).}
    \label{fig:gamma-example}
\end{figure}

\begin{prop}[Prop.~7.7 of \cite{lee-conjecture}]
\label{prop:edges-from-E}
Two vertices $\bs{u}$ and $\bs{v}$ of $Q_w$ are connected by an edge of the polytope if and only if $v=u(ij)$ where $(u(i),u(j)) \in E_w(u)$.
\end{prop}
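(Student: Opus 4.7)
The plan is to prove both directions of the equivalence, using that $Q_w$ is a generalized permutohedron (Tsukerman--Williams). Every edge of such a polytope is parallel to some root $e_a - e_b$, so an edge $\bs{u}\bs{u'}$ of $Q_w$ satisfies $\bs{u'} - \bs{u} = c(e_a - e_b)$ for some $c > 0$, and therefore $\bs{u'}$ agrees with $\bs{u}$ outside coordinates $a$ and $b$. Since vertices of $Q_w$ are the permutation vectors $\bs{w'}$ for $w' \preceq w$, the only such vertex is $\bs{u(ij)}$ where $\{i,j\} = \{u^{-1}(a), u^{-1}(b)\}$. Combined with the fact (also from Tsukerman--Williams) that edges of $Q_w$ are Bruhat covers, this forces $(u(i), u(j)) \in \tilde{E}_w(u)$ for every edge of $Q_w$ at $\bs{u}$; it remains to identify which pairs in $\tilde{E}_w(u)$ give edges.

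For the forward implication, suppose $(u(i), u(j)) \in \tilde{E}_w(u) \setminus E_w(u)$. Transitive reducibility yields a directed path $u(i) = u(k_0) \to u(k_1) \to \cdots \to u(k_r) = u(j)$ in $\tilde{\Gamma}_w(u)$ with $r \geq 2$, and each $\bs{u(k_m k_{m+1})}$ is a vertex of $Q_w$. Using the identity $\bs{u(ab)} - \bs{u} = (b-a)(e_{u(a)} - e_{u(b)})$ for $a < b$, telescoping along the path gives, for any linear functional $f = \sum_c f_c x_c$,
\[
\sum_{m=0}^{r-1}\frac{f(\bs{u(k_m k_{m+1})}) - f(\bs{u})}{k_{m+1} - k_m} = f_{u(i)} - f_{u(j)}.
\]
If $f(\bs{u}) = f(\bs{v})$, equivalently $f_{u(i)} = f_{u(j)}$, the right-hand side vanishes and some summand must be nonnegative. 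The corresponding vertex $\bs{u(k_m k_{m+1})}$ is distinct from both $\bs{u}$ and $\bs{v}$ (the length-$\geq 2$ hypothesis prevents $\{k_m, k_{m+1}\} = \{i,j\}$) and does not lie on the segment $[\bs{u}, \bs{v}]$ (which is constant in every coordinate outside $\{u(i), u(j)\}$, while the vertex differs from $\bs{u}$ at a coordinate $u(k_m)$ or $u(k_{m+1})$ lying outside this pair). Hence $[\bs{u}, \bs{v}]$ is never the unique maximum of any linear functional, so it is not a face of $Q_w$ and $\bs{u}\bs{v}$ is not an edge.

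For the reverse implication, assume $(u(i), u(j)) \in E_w(u)$. I would construct a linear functional $f$ uniquely maximized on $[\bs{u}, \bs{v}]$: take $f_{u(l)}$ strictly decreasing in $l$ for $l \notin \{i,j\}$ and set $f_{u(i)} = f_{u(j)}$ to a common generic value. Then $f(\bs{u}) = f(\bs{v})$ by construction; the telescoping identity from the forward direction shows that only the existence of a directed path $u(i) \to \cdots \to u(j)$ of length $\geq 2$ in $\tilde{\Gamma}_w(u)$ could force another vertex to tie, and the $E_w(u)$ hypothesis rules this out. \textbf{The main obstacle} is verifying that $f$ strictly dominates \emph{all} other vertices of $Q_w$, not just the immediate Bruhat-cover neighbors; this is most cleanly handled by describing the normal cone of $Q_w$ at $\bs{u}$ -- a union of chambers of the braid fan -- explicitly in terms of $\tilde{\Gamma}_w(u)$, or by induction on the $1$-skeleton distance from $\bs{u}$.
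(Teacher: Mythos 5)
The paper does not prove this proposition; it is imported verbatim as Proposition~7.7 of \cite{lee-conjecture}, so there is no internal proof to compare against. What you have produced is an attempted self-contained argument, and I will assess it on those terms.

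Your ``only if'' direction is correct and essentially complete. Using that $Q_w$ is a generalized permutohedron and that edges are Bruhat covers (Tsukerman--Williams), you correctly reduce to pairs $(u(i),u(j)) \in \tilde{E}_w(u)$, and the telescoping identity
\[
\sum_{m=0}^{r-1}\frac{f(\bs{u(k_m k_{m+1})}) - f(\bs{u})}{k_{m+1} - k_m} = f_{u(i)} - f_{u(j)}
\]
is valid: both the edge-vector identity $\bs{u(ab)} - \bs{u} = (b-a)(e_{u(a)} - e_{u(b)})$ and the conclusion that some $\bs{u(k_m k_{m+1})}$ ties or beats $\bs{u}$ whenever $f(\bs{u})=f(\bs{v})$ are sound, and your verification that this tying vertex lies off the segment $[\bs{u},\bs{v}]$ when $r\geq 2$ is correct (acyclicity of $\tilde{\Gamma}_w(u)$ forbids $\{k_m,k_{m+1}\}=\{i,j\}$ unless $r=1$). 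So a transitively reducible pair never yields an edge.

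The gap you flag in the ``if'' direction is genuine, and it is the crux. The telescoping argument only controls the neighbors $\bs{u(k_m k_{m+1})}$, which are adjacent to $\bs{u}$; it says nothing about vertices $\bs{u'}$ with $u'\preceq w$ that are not one transposition away from $u$, yet those are exactly the vertices you must strictly dominate to exhibit $[\bs{u},\bs{v}]$ as a face. The ``induction on $1$-skeleton distance'' idea is circular as stated: to run it you would need to know the edges leaving $\bs{u'}$ and the sign of $f$ on their directions, which is the very information the proposition supplies. The honest way to close the gap is the one you name first: show that the normal cone $N_u$ of $Q_w$ at $\bs{u}$ equals $\{c \mid c_a \leq c_b \text{ whenever } a\xdashrightarrow{u}b\}$, equivalently that the braid chambers contained in $N_u$ are exactly those indexed by linear extensions of $\Gamma_w(u)$ (this is Proposition~\ref{prop:classes-are-linear-extensions-of-gamma} in the present paper, itself quoted from \cite{lee-conjecture, postnikov-reiner-williams}). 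From that description, each relation $c_a\leq c_b$ for $(a,b)\in E_w(u)$ is a facet of $N_u$ precisely because $\Gamma_w(u)$ is transitively reduced, and facets of $N_u$ are in bijection with edges of $Q_w$ at $\bs{u}$ by standard polytope duality. Note, however, that once you invoke this normal-cone description both directions of the proposition fall out at once, so your telescoping argument, while correct, becomes redundant. In short: your forward direction is a nice elementary alternative, but your proof as written does not establish the converse, and patching it requires the normal-cone fact, which subsumes the whole statement.
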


\begin{ex}
\label{ex:gamma-example}
Let $w=3412$ and $u=3142$, then $u(12),u(23),u(34),$ and $u(14)$ each have length $2$ or $4$ and lie below $w$ in Bruhat order. These correspond to the edges $(3,1),(1,4),(4,2),$ and $(3,2)$ in $\tilde{\Gamma}_w(u)$ (see Figure~\ref{fig:gamma-example}). Of these, $(3,2)$ is not an edge of $\Gamma_w(u)$, since the other three edges give another directed path from $3$ to $2$. The remaining edges $(3,1),(1,4),$ and $(4,2)$ of $\Gamma_w(u)$ imply by Proposition~\ref{prop:edges-from-E} that $\bs{3142}$ is connected by an edge of $Q_w$ to $\bs{1342},\bs{3412},$ and $\bs{3124}$, in agreement with Figure~\ref{fig:poset-example}.
\end{ex}

\subsection{Basic properties}
\label{sec:arrows-properties}
When the permutation $w$ is understood, we write $a \xrightarrow{u} b$ when $(a,b) \in \tilde{E}_w(u)$ and $a \xdashrightarrow{u} b$ when there is a directed path from $a$ to $b$ in $\tilde{\Gamma}_w(u)$ (equivalently, in $\Gamma_w(u)$); we write $a \xRightarrow{u} b$ when $(a,b) \in E_w(u)$.

\begin{prop}
\label{prop:gamma-graph-basic-properties}
Let $u,w \in S_n$ with $u \preceq w$, then:
\begin{itemize}
    \item[\normalfont (i)] $\Gamma_w(u)$ and $\tilde{\Gamma}_w(u)$ have no directed cycles, and
    \item[\normalfont (ii)] $\Gamma_w(u)$ contains no triangles (of any orientation).
\end{itemize}
\end{prop}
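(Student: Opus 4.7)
The plan is that both statements reduce cleanly to the observation that the direction of an edge in $\tilde{\Gamma}_w(u)$ is determined by the one-line positions of its endpoints in $u$.

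First I would unpack the definition: an edge $(a,b) \in \tilde{E}_w(u)$ arises from a transposition $(ij)$ with $i<j$, $u(i)=a$, $u(j)=b$. Equivalently, $u^{-1}(a) < u^{-1}(b)$. This is the only fact I need.

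For (i), suppose for contradiction that there is a directed cycle $a_1 \to a_2 \to \cdots \to a_k \to a_1$ in $\tilde{\Gamma}_w(u)$. By the observation above, each edge $a_r \to a_{r+1}$ forces $u^{-1}(a_r) < u^{-1}(a_{r+1})$. Chaining these inequalities around the cycle yields
\[
u^{-1}(a_1) < u^{-1}(a_2) < \cdots < u^{-1}(a_k) < u^{-1}(a_1),
\]
a contradiction. Since $\Gamma_w(u)$ is a subgraph of $\tilde{\Gamma}_w(u)$, it is also acyclic.

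For (ii), any triangle in $\Gamma_w(u)$ must be acyclic by (i); say its vertices are $a,b,c$ with $u^{-1}(a) < u^{-1}(b) < u^{-1}(c)$, so the three edges are forced to be $a \to b$, $b \to c$, and $a \to c$. But then the path $a \to b \to c$ in $\Gamma_w(u) \subseteq \tilde{\Gamma}_w(u)$ makes the edge $a \to c$ redundant in the transitive reduction, contradicting its presence in $\Gamma_w(u)$. Hence no such triangle exists.

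There is no real obstacle here; the argument is essentially an observation about the labeling convention for edges combined with the definition of transitive reduction. I would expect the author's proof to be equally short.
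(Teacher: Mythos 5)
Your proof is correct and is essentially the paper's own argument: acyclicity follows from the fact that every edge $a \to b$ forces $u^{-1}(a) < u^{-1}(b)$, and the absence of triangles follows because an acyclic triangle is never transitively reduced. The paper states this in two sentences without spelling out the chaining of inequalities, but the underlying reasoning is identical.
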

\begin{proof}
For any edge $a \xrightarrow{u} b$, we have by definition that $u^{-1}(a)<u^{-1}(b)$, so $\tilde{\Gamma}_w(u)$ and $\Gamma_w(u)$ cannot contain directed cycles. Any other orientation of a triangle is not transitively reduced, so $\Gamma_w(u)$ does not contain any triangles. 
\end{proof}

\begin{prop}
\label{prop:reflection-gives-gamma-path}
Let $u,w \in S_n$ with $u \preceq w$, and suppose $(ab)u \preceq w$, where $u^{-1}(a)<u^{-1}(b)$, then $a \xdashrightarrow{u} b$. In particular, if $(ab) \in T_L(u)$, then $a \xdashrightarrow{u} b$.
\end{prop}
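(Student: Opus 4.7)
The plan is to induct on the length difference $N := |\ell((ab)u) - \ell(u)|$. Set $i := u^{-1}(a)$ and $j := u^{-1}(b)$, so that $i < j$ and $(ab)u = u(ij)$ as permutations. In the base case $N = 1$, the defining conditions of $\tilde{E}_w(u)$ are immediately satisfied, so $a \xrightarrow{u} b$ is a direct edge and there is nothing to do.

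For the inductive step I would split into two cases according to the sign of $a - b$. In the easier case $a > b$, a standard inversion count for the swap of positions $i$ and $j$ gives $\ell(u) - \ell(u(ij)) = 2m+1$, where $m = |\{k : i < k < j,\ b < u(k) < a\}|$, so $N \geq 3$ forces $m \geq 1$. Picking such a $k$ and setting $c = u(k)$, both $u(ik)$ and $u(kj)$ lie strictly Bruhat-below $u$ (hence $\preceq w$), and the same inversion count performed for each of them involves only values in the subintervals $(c,a)$ and $(b,c)$ of $(b,a)$, each of which is missing at least the value $c$ itself. So the length gaps to $u$ strictly decrease, the induction hypothesis applies, and concatenating gives $a \xdashrightarrow{u} c \xdashrightarrow{u} b$.

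The subtle case is $a < b$, where $u \prec u(ij)$, $\ell(u(ij))-\ell(u) = 2m+1$ for $m = |\{k : i<k<j,\ a<u(k)<b\}|$, and I would pick a witness $k$ with $c = u(k)$ strictly between $a$ and $b$. The main obstacle here is that $u(ik)$ and $u(kj)$ now lie strictly Bruhat-\emph{above} $u$, so membership in $[e,w]$ is not automatic. I would prove $u(ik), u(kj) \preceq w$ via the tableau criterion for Bruhat order: writing $f_v(r,s) := |\{i' \leq r : v(i') > s\}|$, a direct case split shows that $f_{u(ik)}(r,s)$ is always equal either to $f_u(r,s)$ (outside the rectangle $i \leq r < k,\ a \leq s < c$) or to $f_{u(ij)}(r,s)$ (inside it), and hence is bounded above by $f_w(r,s)$ since $u, u(ij) \preceq w$ by hypothesis. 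A symmetric argument handles $u(kj)$, and the length-gap bookkeeping goes through just as in the first case, so induction delivers the path.

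Finally, the ``in particular'' statement follows by choosing the labeling of the unordered pair so that $u^{-1}(a) < u^{-1}(b)$, after which $(ab) \in T_L(u)$ forces $a > b$ and yields $(ab)u \prec u \preceq w$, reducing us to the first part of the proposition.
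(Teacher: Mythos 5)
Your proof is correct and follows the same basic strategy as the paper (induction on the length gap, splitting the path through an intermediate value $c$ strictly between $a$ and $b$), but you make explicit a step the paper declares to be ``clear''. Concretely, the paper picks the \emph{smallest} intermediate position so that the first step $a \xrightarrow{u} c$ is a direct edge and only the second half requires induction, whereas you pick an arbitrary intermediate position and recurse on both halves; both are fine. More substantively, in the case $a<b$ the permutations $(ac)u$ and $(cb)u$ lie strictly \emph{above} $u$ in Bruhat order, so their membership in $[e,w]$ is genuinely nontrivial; the paper glosses over this (it follows from the flattening property, Proposition~\ref{prop:flattening-bruhat}, since $\fl(cab)\preceq\fl(bca)$ and $\fl(abc)\preceq\fl(bca)$ give $(ac)u\preceq (ab)u\preceq w$ and $(cb)u\preceq (ab)u\preceq w$), while you verify it directly via the Ehresmann/tableau rank-function criterion. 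Your rank-function computation is correct: for $r<i$ or $r\geq k$ the counts $f_{(ac)u}$ and $f_u$ agree, and for $i\leq r<k$ they differ only when $a\leq s<c$, where $f_{(ac)u}$ matches $f_{(ab)u}$; hence $f_{(ac)u}\leq\max(f_u,f_{(ab)u})\leq f_w$ entrywise. So your proof is a valid, slightly more self-contained variant of the paper's argument.
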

\begin{proof}
If there is no $i$ with $u^{-1}(a)<i<u^{-1}(b)$ and $\min(a,b)<u(i)<\max(a,b)$, then $|\ell((ab)u)-\ell(u)|=1$, so $a \xrightarrow{u} b$. Otherwise, find the smallest such $i$, for which we clearly have $a \xrightarrow{u} i$. By induction on $|\ell((ab)u)-\ell(u)|$, we have that $i \xdashrightarrow{u} b$, so $a \xdashrightarrow{u} b$.
\end{proof}

\subsection{Local changes}
Throughout this section we suppose that $u,v \in S_n$ satisfy $u \lessdot_w v = (ab)u$, with $a<b$, which by Proposition~\ref{prop:edges-from-E} implies that $(a,b) \in E_w(u)$ and $(b,a) \in E_w(v)$. Our goal is to understand how the graphs $\tilde{\Gamma}_w(u)$ and $\tilde{\Gamma}_w(v)$ differ. The following proposition will be fundamental in the remainder of the paper.

\begin{prop}
\label{prop:v-paths-to-u-paths}
Suppose that $u,v \in S_n$ satisfy $u \lessdot_w v = (ab)u$, then:
\begin{itemize}
    \item[\normalfont(i)] If $c<d$ and $c \xdashrightarrow{v} d$, then $c \xdashrightarrow{u} d$;
    \item[\normalfont(ii)] If $c>d, c \neq b, d \neq a,$ and $c \xdashrightarrow{v} d$, then $c \xdashrightarrow{u} d$.
\end{itemize}
\end{prop}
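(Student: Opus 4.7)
My plan is to prove both parts by induction on the length of the $v$-path, reducing to the case of a single edge which is then handled by a detailed subcase analysis. The main tool throughout is Proposition~\ref{prop:reflection-gives-gamma-path}.

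For the base case of a single edge $c \xrightarrow{v} d$ in $\tilde{\Gamma}_w(v)$, I have $(cd)v \preceq w$ with $|\ell((cd)v)-\ell(v)|=1$. Writing $u = (ab)v$, I expand $(cd)u = (cd)(ab)v$; in most subcases this rewrites as $t \cdot (c'd')v$ for a transposition $t$ and pair $(c',d')$, where $(c'd')v \preceq w$ by hypothesis and $t \in T_L((c'd')v)$ can be verified directly using the positions $v^{-1}(a) = j > i = v^{-1}(b)$. This yields $(cd)u \prec (c'd')v \preceq w$, and Proposition~\ref{prop:reflection-gives-gamma-path} produces $c \xdashrightarrow{u} d$. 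When $\{c,d\}$ meets $\{a,b\}$, the required $u$-path is often exhibited as a two-step concatenation through $a$ or $b$, using the direct edge $a \xrightarrow{u} b$ guaranteed by $(a,b) \in E_w(u) \subseteq \tilde{E}_w(u)$; for example, when $c = a,\, d \notin \{a,b\},\, c < d$, one lifts $(ad)v \preceq w$ to $(bd)u \preceq w$ to obtain $b \xdashrightarrow{u} d$, which combined with the edge $a \xrightarrow{u} b$ yields $a \xdashrightarrow{u} d$.

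The most delicate base-case subcases, both in part~(i), are $c = b$ with $d > b$, and $c < d = a$. In these, the positional inequality $u^{-1}(c) < u^{-1}(d)$ needed for Proposition~\ref{prop:reflection-gives-gamma-path} is not automatic from the analogous inequality in $v$; it could fail only if $d$ (respectively, $c$) sits at a position in the open interval $(i,j)$ in $u$. Here the hypothesis that $(a,b)$ lies in the \emph{transitive reduction} $E_w(u)$, not merely in $\tilde{E}_w(u)$, is used essentially: assuming the positional inequality fails, the same style of rewriting-and-lifting argument as above produces a length-two path $a \xdashrightarrow{u} d \xdashrightarrow{u} b$ (or $a \xdashrightarrow{u} c \xdashrightarrow{u} b$), contradicting that $(a,b)$ lies in the transitive reduction.

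For the inductive step, I split a $v$-path of length at least two at its first intermediate vertex and apply the base case together with the inductive hypothesis to the two pieces, checking the hypothesis of (i) or (ii) for each. The subtlety is that a $v$-path visiting both $a$ and $b$ must do so in the order $(b,a)$ (since $v^{-1}(b) < v^{-1}(a)$), whereas any $u$-path through both must visit them in the reverse order $(a,b)$ (since $u^{-1}(a) < u^{-1}(b)$); the corresponding $u$-path is stitched together through the bridge edge $a \xrightarrow{u} b$. I expect the main obstacle to be the thorough bookkeeping in the case analysis and the delicate use of the transitive-reduction hypothesis $(a,b) \in E_w(u)$ — exactly where the assumption of a cover in $P_w$, rather than an arbitrary Bruhat cover, is essential.
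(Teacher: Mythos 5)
Your proposal takes a genuinely different route from the paper. The paper does not induct on path length; instead it proves standalone lemmas (Lemmas~\ref{lem:notes-lemma-4}--\ref{lem:new-path-goes-through-a-b}) covering single edges into/out of $\{a,b\}$, two-step passages through $a$, two-step passages through $b$, and paths avoiding $\{a,b\}$ entirely, and then assembles a $u$-path from a $v$-path by splicing these. Your base-case analysis (flattening comparisons plus Proposition~\ref{prop:reflection-gives-gamma-path}) uses the same ingredients, and your identification of the two delicate single-edge subcases and the essential role of the transitive-reduction hypothesis is correct.

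However, the inductive step as stated has a real gap: splitting a $v$-path at its first intermediate vertex can produce a piece that satisfies neither hypothesis of (i) nor of (ii), and for which the desired conclusion is genuinely \emph{false}, so the recursion does not close. Two concrete failures. First, if the path is $c \xdashrightarrow{v} c' \xrightarrow{v} a \xdashrightarrow{v} d$ and the recursion isolates $c' \xrightarrow{v} a$ with $c' > a$, $c' \neq b$, then (ii) fails (target is $a$); and when $c' > b$ with $v^{-1}(b) < v^{-1}(c') < v^{-1}(a)$, one has $u^{-1}(a) = v^{-1}(b) < v^{-1}(c') = u^{-1}(c')$, so $c' \xdashrightarrow{u} a$ is impossible — the $u$-path must be re-routed through $b$. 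Second, if $c = b \xrightarrow{v} d_1 \xdashrightarrow{v} d$ with $d_1 < a$ and $v^{-1}(d_1) < v^{-1}(a)$, the first edge satisfies neither (i) nor (ii) (as $c = b$), and one can have $u^{-1}(b) = v^{-1}(a) > v^{-1}(d_1) = u^{-1}(d_1)$, so $b \xdashrightarrow{u} d_1$ fails; the correct $u$-path skips ahead rather than using $d_1$. You note that re-routing via the bridge edge $a \xrightarrow{u} b$ is needed, but you scope it only to $v$-paths visiting \emph{both} $a$ and $b$; re-routing is required whenever the path visits either one, and sometimes the $u$-path must bypass $a$ rather than bridge through it. The fix is to never split \emph{at} $a$ or $b$: treat $c' \xrightarrow{v} a \xrightarrow{v} d'$ and $c' \xrightarrow{v} b \xrightarrow{v} d'$ (and the boundary variants where $a$ or $b$ is an endpoint) as atomic units and establish the corresponding $u$-paths directly — which is exactly what the paper's Lemmas~\ref{lem:c-to-d-through-a} and \ref{lem:c-to-d-through-b} accomplish — and only then splice.
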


We will prove Proposition~\ref{prop:v-paths-to-u-paths} after a series of lemmas.

\begin{lemma}
\label{lem:notes-lemma-4}
Suppose that $u,v \in S_n$ satisfy $u \lessdot_w v = (ab)u$, then:
\begin{itemize}
    \item[\normalfont{(i)}] If $c \xrightarrow{v} b$ then $c \xdashrightarrow{u} a$;
    \item[\normalfont{(ii)}] If $a \xrightarrow{v} d$ then $b \xdashrightarrow{u} d$.
\end{itemize}
\end{lemma}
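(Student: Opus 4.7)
The plan is to reduce both parts to an application of Proposition~\ref{prop:reflection-gives-gamma-path}, whose Bruhat-order hypothesis I will verify by invoking the flattening property (Proposition~\ref{prop:flattening-bruhat}) and working in $S_3$. Set $i=u^{-1}(a)$ and $j=u^{-1}(b)$; the cover $u\lessdot_w v=(ab)u$ with $a<b$ and $\ell(v)>\ell(u)$ forces $i<j$, $v^{-1}(b)=i$, $v^{-1}(a)=j$, and $u,v\preceq w$.

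For part~(i), the edge $c\xrightarrow{v}b$ supplies $v^{-1}(c)<v^{-1}(b)=i$, $c\notin\{a,b\}$, and $(cb)v\preceq w$. Since $c\neq a,b$, the position of $c$ is unchanged from $u$ to $v$, so $u^{-1}(c)<i=u^{-1}(a)$; by Proposition~\ref{prop:reflection-gives-gamma-path} it then suffices to prove $(ca)u\preceq w$. The three permutations $u$, $(ca)u$, and $(cb)v=(cb)(ab)u$ agree outside the positions $A=\{u^{-1}(c),i,j\}$, where they take values in $\{a,b,c\}$, so Proposition~\ref{prop:flattening-bruhat} reduces all Bruhat comparisons among them to comparisons of their flattenings in $S_3$. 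I would split into the three cases $c<a$, $a<c<b$, and $b<c$, record the flattenings of the subsequences $(c,a,b)$, $(a,c,b)$, $(b,c,a)$, and verify in each case that the flattening of $(ca)u$ lies Bruhat-below the flattening of either $u$ or $(cb)v$; combined with $u\preceq w$ and $(cb)v\preceq w$, this yields $(ca)u\preceq w$.

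Part~(ii) runs on exactly the same template. From $a\xrightarrow{v}d$ I would extract $u^{-1}(d)=v^{-1}(d)>j=u^{-1}(b)$, $d\notin\{a,b\}$, and $(ad)v\preceq w$, so by Proposition~\ref{prop:reflection-gives-gamma-path} the goal reduces to showing $(bd)u\preceq w$. Applying Proposition~\ref{prop:flattening-bruhat} to the triple $u$, $(bd)u$, $(ad)v=(ad)(ab)u$ at positions $A'=\{i,j,u^{-1}(d)\}$ with values $\{a,b,d\}$, the three cases $d<a$, $a<d<b$, $b<d$ again collapse to an $S_3$ check that produces either $(bd)u\preceq u$ or $(bd)u\preceq(ad)v$. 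The only real content of the proof is this $S_3$ bookkeeping: in each ordering of the three values, one has to identify which of the two comparison permutations Bruhat-dominates the target---straightforward, but easy to get wrong without writing the flattenings out carefully.
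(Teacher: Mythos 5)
Your argument is correct and matches the paper's approach: both reduce each part to Proposition~\ref{prop:reflection-gives-gamma-path} and verify the required Bruhat inequality via the flattening property (Proposition~\ref{prop:flattening-bruhat}). The only cosmetic difference is that the paper avoids your three-way case split by noting the single inequality $(ac)u \preceq (bc)v$ directly: the flattened triples $(a,c,b)$ and $(b,c,a)$ differ by transposing positions $1$ and $3$, which creates an inversion since $a<b$ regardless of where $c$ falls, so the comparison holds without examining the relative order of $c$.
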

\begin{proof}
Since $c \xrightarrow{v} b \xrightarrow{v} a$, we have $v=\ldots c \ldots b \ldots a \ldots$ and since $u=(ab)v$, we have $u=\ldots c \ldots a \ldots b \ldots$. The fact that $c \xrightarrow{v} b$ implies by definition that $(bc)v \preceq w$. Now, $(bc)v=\ldots b \ldots c \ldots a \ldots$ while $(ac)u=\ldots a \ldots c \ldots b \ldots$. By Proposition~\ref{prop:flattening-bruhat} and since $a<b$ we have $(ac)u \prec (bc)v \preceq w$. Finally, by Proposition~\ref{prop:reflection-gives-gamma-path} we get $c \xdashrightarrow{u} a$. The proof of (ii) is exactly analogous.
\end{proof}

\begin{lemma}
\label{lem:c-to-d-through-a}
Suppose that $u,v \in S_n$ satisfy $u \lessdot_w v = (ab)u$ and that $c \xrightarrow{v} a \xrightarrow{v} d$, then $c \xdashrightarrow{u} d$.
\end{lemma}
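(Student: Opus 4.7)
The plan is to produce some $y \in \{a, b\}$ with $c \xdashrightarrow{u} y$, after which the path extends to $d$ via the cover edge $a \xrightarrow{u} b$ (if $y = a$) and the relation $b \xdashrightarrow{u} d$ supplied by Lemma~\ref{lem:notes-lemma-4}(ii) applied to $a \xrightarrow{v} d$. The subcase $c = b$ is handled directly by Lemma~\ref{lem:notes-lemma-4}(ii), so I assume $c \neq a, b$.

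Writing $i = u^{-1}(a)$ and $j = u^{-1}(b)$, we have $i < j$, $v^{-1}(b) = i$, and $v^{-1}(a) = j$. The condition $v^{-1}(c) < j$ from $c \xrightarrow{v} a$, together with $c \neq b$, forces either $v^{-1}(c) < i$ (Case 1) or $i < v^{-1}(c) < j$ (Case 2). In each case the strategy is to establish $(cy)u \preceq w$ for an appropriate $y$ by comparing this element to the known inequality $(ca)v \preceq w$ via the flattening property (Proposition~\ref{prop:flattening-bruhat}); Proposition~\ref{prop:reflection-gives-gamma-path} then supplies the desired directed path.

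In Case 1 I would take $y = a$: the permutations $(ca)u$ and $(ca)v$ agree outside $\{i, j\}$, where their entries are $(c, b)$ and $(b, c)$ respectively, so flattening gives $(ca)u \preceq (ca)v$ provided $c < b$. To establish $c < b$, note that $v(i) = b$ lies at a position strictly between $v^{-1}(c)$ and $v^{-1}(a)$; the length-one condition for $c \xrightarrow{v} a$ rules out $b$ being strictly between $a$ and $c$ in value, and combined with $a < b$ this forces $b > c$. In Case 2 I would take $y = b$: now $(cb)u$ and $(ca)v$ agree outside $\{i, v^{-1}(c)\}$, with entries $(a, b)$ and $(b, a)$ on that set, so flattening immediately yields $(cb)u \preceq (ca)v$ from $a < b$ alone.

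The delicate part is Case 1: beyond the bookkeeping to confirm that the effects of the swaps $(ab)$ and $(ca)$ combine to leave $(ca)u$ and $(ca)v$ identical outside $\{i, j\}$, one must extract the inequality $c < b$ from the length-one hypothesis, which in turn hinges on noticing that the position of $b$ in $v$ lies between those of $c$ and $a$. Case 2 is much more direct because the only inequality needed is the cover hypothesis $a < b$.
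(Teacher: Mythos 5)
Your proof is correct and follows essentially the same route as the paper: the same case split on whether $c$ precedes $b$ in $v$ (the paper's $v=cba d$ versus $v=bcad$), the same use of Lemma~\ref{lem:notes-lemma-4}(ii) to get $b \xdashrightarrow{u} d$, and the same strategy of establishing $(cy)u \preceq w$ via Proposition~\ref{prop:flattening-bruhat} and then invoking Proposition~\ref{prop:reflection-gives-gamma-path}. The only difference is that you streamline the branching: in Case~1 the paper splits into $c>a$ and $c<a$, whereas you extract $c<b$ once from the length-one condition and run a single flattening argument, and in Case~2 the paper first rules out $c<b$ by a no-triangle contradiction whereas your flattening comparison of $(cb)u$ with $(ca)v$ works unconditionally.
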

\begin{proof}
By Lemma~\ref{lem:notes-lemma-4}(ii) we have $b \xdashrightarrow{u} d$, so if $c=b$ we are done. Thus assume $c \neq b$; since $b \xrightarrow{v} a$, this leaves two possibilities for $v$, omitting the ellipses: $v=cbad$ or $v=bcad$.

Consider first the case $v=cbad$, in which case $u=cabd$. If $c>a$, then $c \xdashrightarrow{u} a$ by Proposition~\ref{prop:reflection-gives-gamma-path}, so $c \xdashrightarrow{u} a \xrightarrow{u} b \xdashrightarrow{u} d$. Thus assume $c<a$. In this case, we have $(ac)u=acbd \prec w$ by Proposition~\ref{prop:flattening-bruhat} since $c<a<b$ and $(ac)v=abcd \preceq w$. Thus by Proposition~\ref{prop:reflection-gives-gamma-path} we have $c \xdashrightarrow{u} a \xrightarrow{u} b \xdashrightarrow{u} d$.

Consider now the case $v=bcad$ and $u=acbd$. First suppose $c<b$, in this case we have $b \xdashrightarrow{v} c \xrightarrow{v} a$, contradicting the fact that $b \xRightarrow{v} a$. Thus $c>b$ so $c \xdashrightarrow{u} b \xdashrightarrow{u} d$.
\end{proof}

\begin{lemma}
\label{lem:c-to-d-through-b}
Suppose that $u,v \in S_n$ satisfy $u \lessdot_w v = (ab)u$ and that $c \xrightarrow{v} b \xrightarrow{v} d$, then $c \xdashrightarrow{u} d$.
\end{lemma}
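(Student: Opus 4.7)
The plan is to mirror the four-element strategy of Lemma~\ref{lem:c-to-d-through-a}: restrict attention to $A = \{a,b,c,d\}$ via Proposition~\ref{prop:flattening-bruhat} and build directed paths in $\tilde{\Gamma}_w(u)$ by combining Lemma~\ref{lem:notes-lemma-4} with Proposition~\ref{prop:reflection-gives-gamma-path}. First dispose of the degenerate case $d = a$: here the hypothesis $b \xrightarrow{v} d$ is just the known edge $b \xrightarrow{v} a$, and Lemma~\ref{lem:notes-lemma-4}(i) applied to $c \xrightarrow{v} b$ directly yields $c \xdashrightarrow{u} a = d$.

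From now on assume $d \neq a$. The edges $c \xrightarrow{v} b$, $b \xrightarrow{v} a$, and $b \xrightarrow{v} d$ force $c$ to appear before $b$ in $v$ and both $a, d$ to appear after $b$, so the one-line pattern of $v$ restricted to $A$ is either $cbad$ or $cbda$. Since $u = (ab)v$ swaps the values $a$ and $b$, the corresponding patterns of $u$ on $A$ are $cabd$ or $cadb$.

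\textbf{Case 1} ($v_A = cbad$, $u_A = cabd$). I will construct the path $c \xdashrightarrow{u} a \xrightarrow{u} b \xdashrightarrow{u} d$. The first arrow is Lemma~\ref{lem:notes-lemma-4}(i), the middle arrow is the standing edge $a \xrightarrow{u} b$, and for the third it suffices to prove $(bd)u \preceq w$ and then invoke Proposition~\ref{prop:reflection-gives-gamma-path}. Split on the sign of $a - d$: if $a < d$, the permutations $(bd)u$ and $(bd)v$ agree outside the two positions $u^{-1}(a)$ and $u^{-1}(b)$, where they read $(a,d)$ and $(d,a)$ respectively; these flatten to $12 \preceq 21$ in $S_2$, so $(bd)u \preceq (bd)v \preceq w$. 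If instead $d < a$, compare $(bd)u$ directly with $v$ at the three positions that $v$ assigns to $b, a, d$; the triples $(a,d,b)$ and $(b,a,d)$ flatten under $d < a < b$ to $213 \preceq 321$ in $S_3$ (since $s_1$ is a subword of $s_1s_2s_1$), so $(bd)u \preceq v \preceq w$.

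\textbf{Case 2} ($v_A = cbda$, $u_A = cadb$). I aim for $c \xdashrightarrow{u} a \xdashrightarrow{u} d$. The first arrow is again Lemma~\ref{lem:notes-lemma-4}(i); for the second, compare $(ad)u$ with $(bd)v$. A direct check shows these agree outside the positions $v^{-1}(d)$ and $v^{-1}(a)$, at which they read $(a,b)$ and $(b,a)$ respectively; since $a < b$, this flattens to $12 \preceq 21$, hence $(ad)u \preceq (bd)v \preceq w$, and Proposition~\ref{prop:reflection-gives-gamma-path} delivers $a \xdashrightarrow{u} d$.

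The main obstacle is the sub-case $d < a$ of Case 1: the natural candidate $(bd)v$ for comparison with $(bd)u$ flips to the wrong side of Bruhat order under flattening. The trick is to step back and compare $(bd)u$ against $v$ itself on a three-element window, exploiting $213 \preceq 321$ in $S_3$. Once the correct auxiliary permutation is identified in each case, the flattening computations are mechanical and Proposition~\ref{prop:reflection-gives-gamma-path} finishes the argument.
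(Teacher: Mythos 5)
Your proof is correct and follows essentially the same route as the paper's: dispose of the degenerate case $d=a$ via Lemma~\ref{lem:notes-lemma-4}, split into the two patterns $v_A = cbad$ and $v_A = cbda$, and in each case use flattening (Proposition~\ref{prop:flattening-bruhat}) plus Proposition~\ref{prop:reflection-gives-gamma-path} to extract the needed path in $\tilde{\Gamma}_w(u)$. The only cosmetic difference is in the $d<a$ sub-case of the first pattern, where the paper compares $(bd)u$ to $v$ through the intermediate $cbda$ while you do a single three-position flattening against $v$ directly; the computations are equivalent.
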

\begin{proof}
By Lemma~\ref{lem:notes-lemma-4}(i) we have $c \xdashrightarrow{u} a$, so if $d=a$ we are done. Otherwise, there are two possibilities for $v$, omitting ellipses: $v=cbad$ or $v=cbda$.

If $v=cbad$ then $u=cabd$. Consider $(bd)u=cadb$: we have $(bd)u \prec cbda$, and, if $d<a$ we have $cbda \prec v \preceq w$, so $b \xdashrightarrow{u} d$ by Proposition~\ref{prop:reflection-gives-gamma-path}; if instead $d>a$ then $(bd)u \prec cdab = (bd) v \prec w$, so again $b \xdashrightarrow{u} d$. Thus in either case we have $c \xdashrightarrow{u} a \xrightarrow{u}b \xdashrightarrow{u} d$.

If $v=cbda$ and $u=cadb$, then $(ad)u=cdab \prec cdba = (bd)v \preceq w$. Thus $c \xdashrightarrow{u} a \xdashrightarrow{u} d$.
\end{proof}

\begin{lemma}
\label{lem:new-path-goes-through-a-b}
Suppose that $u,v \in S_n$ satisfy $u \lessdot_w v = (ab)u$ and that
\[
c \xrightarrow{v} i_1 \xrightarrow{v} \cdots \xrightarrow{v} i_k \xrightarrow{v} d
\]
with $c,i_1,\ldots,i_k,d \not \in \{a,b\}$, then $c \xdashrightarrow{u}d$.
\end{lemma}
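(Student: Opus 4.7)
The plan is to reduce the statement to the base case of a single edge $x \xrightarrow{v} y$ with $x, y \notin \{a, b\}$ and then compose. Since by hypothesis every vertex along the $v$-path from $c$ to $d$ lies outside $\{a, b\}$, each of the $k+1$ consecutive pairs on this path will satisfy the hypotheses of the base case, and concatenating the resulting $u$-paths will yield $c \xdashrightarrow{u} d$.

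For the base case, I would argue as follows. Because $\{x, y\}$ and $\{a, b\}$ are disjoint, the transpositions $(xy)$ and $(ab)$ commute, so $v = (ab)u$ gives $(xy)v = (ab)(xy)u$. Hence $(xy)u$ and $(xy)v$ agree at every position outside $\{u^{-1}(a), u^{-1}(b)\}$, while at those two positions (with $u^{-1}(a) < u^{-1}(b)$) they read $(a,b)$ and $(b,a)$ respectively. The flattening property (Proposition~\ref{prop:flattening-bruhat}) then reduces the Bruhat comparison of $(xy)u$ and $(xy)v$ to $\fl(a,b) = e \preceq s_1 = \fl(b,a)$ in $S_2$, which holds. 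Combined with $(xy)v \preceq w$ (from the edge hypothesis), this gives $(xy)u \preceq w$. Since $x, y \notin \{a, b\}$ also forces $u^{-1}(x) = v^{-1}(x) < v^{-1}(y) = u^{-1}(y)$, Proposition~\ref{prop:reflection-gives-gamma-path} yields $x \xdashrightarrow{u} y$.

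I do not anticipate any real obstacle here. The only conceptual point is that $(xy)u$ and $(xy)v$ are related by exactly the same cover-move $(ab)$ that relates $u$ and $v$, so flattening collapses the comparison to the trivial relation in $S_2$. This also clarifies why the hypothesis that the entire path avoids $\{a, b\}$ is essential: when an interior vertex of a $v$-path equals $a$ or $b$, the commuting argument fails and the separate handling carried out in Lemmas~\ref{lem:notes-lemma-4}, \ref{lem:c-to-d-through-a}, and~\ref{lem:c-to-d-through-b} becomes necessary for the eventual proof of Proposition~\ref{prop:v-paths-to-u-paths}.
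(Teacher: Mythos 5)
Your proof is correct and follows the paper's outline: reduce to the single-edge case by transitivity of $\xdashrightarrow{u}$, then observe that for $c,d \notin \{a,b\}$ one has $u^{-1}(c) < u^{-1}(d)$ and $(cd)u \preceq (cd)v \preceq w$. Your commutation-plus-flattening argument is a careful justification of the step $(cd)u \preceq (cd)v$, which the paper asserts without comment. Where the two proofs diverge is the finish: the paper checks by hand whether $|\ell((cd)u)-\ell(u)|=1$ and, if not, determines the relative order of $a,b,c,d$ in $u$ to exhibit an explicit two-step path through $a$ or $b$; you instead invoke Proposition~\ref{prop:reflection-gives-gamma-path} directly, which is exactly the general fact that the paper's case analysis is re-deriving in this special situation. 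Your route is a valid and slightly cleaner shortcut, at the cost of not making visible that the new intermediate vertex on the $u$-path is always $a$ or $b$ (a fact the paper's version exposes but does not subsequently use).
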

\begin{proof}
It suffices to prove the case $k=0$, since the relation $\xdashrightarrow{u}$ is transitive, so suppose $c \xrightarrow{v} d$ with $c,d \not \in \{a,b\}$. By definition, we know $v^{-1}(c)<v^{-1}(d), (cd)v \preceq w,$ and $\left| \ell((cd)v) - \ell(v) \right|=1$. Since $c,d \not \in \{a,b\}$, we know $u^{-1}(c)<u^{-1}(d)$ and $(cd)u \preceq (cd)v \preceq w$. If in addition we have $\left| \ell((cd)u) - \ell(u) \right|=1$, then $c \xrightarrow{u} d$ and we are done. Otherwise, it must be that the values $a,b,c,d$ appear in $u$ in the relative order $acbd$, with $\min(c,d)<b<\max(c,d)$ or in the relative order $cadb$ with $\min(c,d)<a<\max(c,d)$. In the first case we have $c \xrightarrow{u} b \xrightarrow{u}d$ and in the second case we have $c \xrightarrow{u} a \xrightarrow{u} d$.
\end{proof}

We are now ready to give the proof of Proposition~\ref{prop:v-paths-to-u-paths}.

\begin{proof}[Proof of Proposition~\ref{prop:v-paths-to-u-paths}]
Suppose that $u,v \in S_n$ satisfy $u \lessdot_w v = (ab)u$, with $a<b$, and suppose $c \xdashrightarrow{v} d$. Suppose first that $c,d \not \in \{a,b\}$ and consider a path in $\tilde{\Gamma}_w(v)$ from $c$ to $d$. If the path does not pass through $a$ nor $b$, then $c \xdashrightarrow{u} d$ by Lemma~\ref{lem:new-path-goes-through-a-b}. If the path passes through $a$, so $c \xdashrightarrow{v} c' \xrightarrow{v} a \xrightarrow{v} d' \xdashrightarrow{v} d$, then applying Lemma~\ref{lem:new-path-goes-through-a-b} and Lemma~\ref{lem:c-to-d-through-a} we get $c \xdashrightarrow{u} c' \xdashrightarrow{u} d' \xdashrightarrow{u} d$. Similarly, if the path passes through $b$, or through both $b$ and $a$ using the edge $b \xrightarrow{v} a$, we can conclude $c \xdashrightarrow{u} d$ by applying Lemmas~\ref{lem:notes-lemma-4}-\ref{lem:new-path-goes-through-a-b}.

It only remains to consider the cases where $\{c, d\} \cap \{a,b\} \neq \emptyset$. We will cover the cases $c=a$ or $b$, with the situation for $d=a$ or $b$ being symmetrical (note that if $(c,d)=(b,a)$ then neither part of Proposition~\ref{prop:v-paths-to-u-paths} applies, and indeed we have $d \xrightarrow{u} c$ instead).

If $c=a \xrightarrow{v} d_1 \xrightarrow{v} \cdots \xrightarrow{v} d_k=d$, then we have $c=a \xrightarrow{u} b \xdashrightarrow{u} d_1 \xdashrightarrow{u} d_k=d$ by Lemmas~\ref{lem:notes-lemma-4}(ii) and \ref{lem:new-path-goes-through-a-b}. If $c=b \xrightarrow{v} a \xrightarrow{v} d_1 \xrightarrow{v} \cdots \xrightarrow{v} d_k=d$, we have $c=b\xdashrightarrow{u} d_1 \xdashrightarrow{u} d_k=d$ by Lemmas~\ref{lem:notes-lemma-4} and \ref{lem:new-path-goes-through-a-b}. If $c=b \xrightarrow{v} d_1 \xrightarrow{v} \cdots \xrightarrow{v} d_k=d$ and $b>d$, then neither case of the proposition applies.

Thus the last case to consider is $c=b \xrightarrow{v} d_1 \xrightarrow{v} \cdots \xrightarrow{v} d_k=d$ with $b<d$ and $a \neq d_1$. If $d_1>a$ and $v^{-1}(a)<v^{-1}(d_1)$, then $(b d_1)u \prec (b d_1)v \preceq w$, so $b \xdashrightarrow{u} d_1 \xdashrightarrow{u} d_k$ by Proposition~\ref{prop:reflection-gives-gamma-path} and Lemma~\ref{lem:new-path-goes-through-a-b}. If $d_1>a$ and $v^{-1}(a)>v^{-1}(d_1)$ then $b \xrightarrow{v} d_1 \xdashrightarrow{v} a$, contradicting the assumption that $b \xRightarrow{v} a$. If $d_1<a$ and $v^{-1}(a)<v^{-1}(d_1)$, then we have $a \xdashrightarrow{v} d_1$, so we may instead consider a path $b \xrightarrow{v} a \xdashrightarrow{v} d_1 \xrightarrow{v} \cdots \xrightarrow{v} d_k$ and apply a previous case. Finally, if $d_1<a$ and $v^{-1}(d_1)<v^{-1}(a)$, consider the smallest $i$ such that $v^{-1}(d_i)>v^{-1}(a)$ (this exists, since $a<b<d$, so we must have $v^{-1}(d)>v^{-1}(a)$ to avoid contradicting $b \xRightarrow{v} a$). Then $d_{i-1}<a$ (otherwise $b \xdashrightarrow{v} d_{i-1} \xdashrightarrow{v} a$ would contradict $b \xRightarrow{v} a$) and so we must have $d_i<a$, since otherwise $(d_{i-1}d_i)$ would not give a Bruhat cover of $v$. Thus $a \xdashrightarrow{v} d_i$, so $c=b \xdashrightarrow{u} d_i \xdashrightarrow{u} d$ by Lemmas~\ref{lem:notes-lemma-4} and Lemma~\ref{lem:new-path-goes-through-a-b}. 
\end{proof}

\section{The lattice property}
\label{sec:lattice}

\subsection{Generalized permutohedra}

The \emph{normal fan} $N(Q)$ of a polytope $Q \subset \R^n$ (see e.g. \cite[Ex.~7.3]{ziegler-lectures}) is the fan in $\R^n$ with a cone $C(F)$ for each nonempty face $F$ of $Q$ with
\[
C(F) = \{ x \in \R^n \mid F \subseteq \argmax_{x' \in Q} \langle x, x' \rangle \}.
\]
The correspondence $F \mapsto C(F)$ is an order-reversing bijection from the poset of faces of $Q$ under containment to the poset of cones of $N(Q)$ under containment.

The normal fan of the permutohedron $\Perm_n = Q_{w_0}$ is the fan determined by the \emph{braid arrangement}, which has defining hyperplanes $x_i-x_j=0$ for $1 \leq i < j \leq n$. The top-dimensional cones $C_{w_0}(\bs{y})$ in this fan are naturally labelled by permutations $y \in S_n$ which give the relative order of the coordinates of a point $(x_1,\ldots,x_n) \in C_{w_0}(\bs{y})$. In particular we have $\bs{y} \in C_{w_0}(\bs{y})$. 

Following Postnikov \cite{postnikov-beyond}, a polytope $Q$ such that cones of $N(Q)$ are unions of cones of $N(\Perm_n)$ is called a \emph{generalized permutohedron}. Kodama--Williams \cite[Cor.~A.8]{kodama-williams} showed that Bruhat interval polytopes are generalized permutohedra. Let $C_w(\bs{u})$ denote the top-dimensional cone of $N(Q_w)$ corresponding to the vertex $\bs{u} \in Q_w$ (where $u \in [e,w]$). Each $C_w(\bs{u})$ is a union of some of the $C_{w_0}(\bs{y})$; viewing these as equivalence classes on the $y$, we obtain an equivalence relation $\Theta_w$ on $S_n$. We write $[y]_w$ for the equivalence class of $y$ under $\Theta_w$. 

We say $y \in S_n$ is a \emph{linear extension} of $\Gamma_w(u)$ (equivalently, of $\tilde{\Gamma}_w(u)$) if $y^{-1}(i)<y^{-1}(j)$ whenever $i \xdashrightarrow{u} j$. The following proposition is immediate from the construction of $\Gamma_w(u)$ in \cite[\S7]{lee-conjecture} and the discussion of normal fans of generalized permutohedra in \cite[\S3]{postnikov-reiner-williams}.

\begin{prop}
\label{prop:classes-are-linear-extensions-of-gamma}
Let $w \in S_n$ and $u \preceq w$, then $[u]_w$ is exactly the set of linear extensions of $\Gamma_w(u)$.
\end{prop}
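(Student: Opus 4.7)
The plan is to give an explicit description of the normal cone $C_w(u)$ via linear inequalities coming from the edges of $Q_w$ at $\bs{u}$, and then to recognize this cone as the union of those braid cones $C(y)$ whose indexing permutation $y$ linearly extends $\Gamma_w(u)$.

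First, I would compute the edge directions at $\bs{u}$. By Proposition~\ref{prop:edges-from-E}, the vertices adjacent to $\bs{u}$ are precisely the $\bs{u(ij)}$ for which $(a,b) = (u(i),u(j)) \in E_w(u)$ with $i<j$. A direct calculation using the convention $\bs{v}_k = v^{-1}(k)$ gives
\[
\bs{u(ij)} - \bs{u} = (j-i)(e_a - e_b),
\]
a positive multiple of $e_a - e_b$. Using the standard polyhedral fact that the tangent cone at a vertex of any polytope is spanned by the vectors along the edges incident to that vertex, the tangent cone of $Q_w$ at $\bs{u}$ is the conical hull of $\{e_a - e_b : (a,b) \in E_w(u)\}$; taking polar duals yields
\[
C_w(u) = \{\lambda \in \R^n : \lambda_a \leq \lambda_b \text{ for all } (a,b) \in E_w(u)\}.
\]
Since such inequalities propagate along directed paths, $C_w(u)$ is equivalently cut out by the conditions $\lambda_a \leq \lambda_b$ whenever $a \xdashrightarrow{u} b$.

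Next I would match this against the braid cones. The closed braid cone is $\overline{C(y)} = \{\lambda : \lambda_{y(1)} \leq \cdots \leq \lambda_{y(n)}\}$, so $\overline{C(y)} \subseteq C_w(u)$ if and only if each defining inequality of $C_w(u)$ is implied by the total ordering imposed by $y$. That is, $\overline{C(y)} \subseteq C_w(u)$ iff $y^{-1}(a) < y^{-1}(b)$ whenever $a \xdashrightarrow{u} b$, which is precisely the condition that $y$ be a linear extension of $\Gamma_w(u)$. Combined with the Kodama--Williams result that $Q_w$ is a generalized permutohedron (so that $C_w(u)$ is a union of closed braid cones), this yields the desired equality $[u]_w = \{\text{linear extensions of }\Gamma_w(u)\}$.

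The principal obstacle --- and it is small --- is carefully verifying the sign of the edge vector computation above with the paper's conventions, and confirming that at a possibly non-simple vertex $\bs{u}$ the normal cone is cut out by just the edge inequalities; the latter uses the polyhedral fact recalled above together with the generalized-permutohedron structure of $Q_w$.
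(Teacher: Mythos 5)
Your argument is correct, and it is precisely the reasoning that the paper outsources to the cited references: the normal cone $C_w(u)$ is the polar of the conical hull of the edge directions $e_a - e_b$ for $(a,b) \in E_w(u)$, and a closed braid cone $\overline{C(y)}$ lies inside $C_w(u)$ exactly when $y$ linearly extends $\Gamma_w(u)$. Your sign computation $\bs{u(ij)} - \bs{u} = (j-i)(e_a-e_b)$ and your appeal to the fact that the tangent cone at a (possibly non-simple) vertex is generated by its edge rays are both correct, so this fills the gap cleanly.
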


Somewhat surprisingly, the equivalence classes $[x]_w$ turn out to be intervals in right weak order. This result was established by other means in \cite[Prop.~4.3]{lee-conjecture}.

\begin{prop}
\label{prop:class-has-min-max-elements}
Let $x,w \in S_n$, then there exist elements $\Bot_w(x)$ and $\Top_w(x)$ such that $[x]_w=[\Bot_w(x),\Top_w(x)]_R$.
\end{prop}
\begin{proof}
Let $u$ be the unique element of $[e,w] \cap [x]_w$. By Proposition~\ref{prop:classes-are-linear-extensions-of-gamma}, the elements $y$ of $[x]_w$ are exactly the linear extensions of $\tilde{\Gamma}_w(u)$. Suppose that $(ab) \in T_L(u)$ with $a<b$, then by Proposition~\ref{prop:reflection-gives-gamma-path} we have $b \xdashrightarrow{u} a$, so by Proposition~\ref{prop:classes-are-linear-extensions-of-gamma} we have $(ab) \in T_L(y)$ for any $y \in [x]_w$. Thus by Proposition~\ref{prop:weak-characterized-by-inversions} we have $u \leq_R y$, so $\Bot_w(x)=u$.

The reflections occurring as left inversions of some linear extension of $\tilde{\Gamma}_w(u)$ are exactly those in 
\[
I \coloneqq \{(ab) \mid a<b \text{ and } a \not \xdashrightarrow{u} b\}.
\]
To see that a unique maximum $\Top_w(x)$ exists, we will demonstrate that $R=\{e_a-e_b \mid (ab) \in I\}$ is biclosed, so that $\Top_w(x)$ will be the unique permutation with left inversion set $I$.

First, note that if $a \xdashrightarrow{u} b$ and $b \xdashrightarrow{u} c$, then $a \xdashrightarrow{u} c$, so $R$ is coclosed. 

For closedness, let $a<b<c$ and assume that $a \xdashrightarrow{u} c$, which implies that $u^{-1}(a)<u^{-1}(c)$. If $u^{-1}(b)<u^{-1}(a)$, then $(ab) \in T_L(u)$, so by Proposition~\ref{prop:reflection-gives-gamma-path} we have $b \xdashrightarrow{u} a \xdashrightarrow{u} c$, so $b \xdashrightarrow{u} c$. If instead $u^{-1}(b)>u^{-1}(c)$, then $(bc) \in T_L(u)$, so by Proposition~\ref{prop:reflection-gives-gamma-path} we have $a \xdashrightarrow{u} c \xdashrightarrow{u} b$, so $a \xdashrightarrow{u} b$. Otherwise we have $u^{-1}(a)<u^{-1}(b)<u^{-1}(c)$. Consider a path $a \to a_1 \to \cdots \to a_r \to c_1 \to \cdots \to c_s \to c$, where $u^{-1}(a_i)\leq u^{-1}(b)$ and $u^{-1}(b)<u^{-1}(c_j)$ for all $i,j$. If any $a_i>b$, then $a \xdashrightarrow{u} a_i \xdashrightarrow{u} b$. If any $c_j<b$, then $b \xdashrightarrow{u} c_j \xdashrightarrow{u} c$. Otherwise, since $(a_r \: c_1) u$ covers $u$ in Bruhat order, we must have $a_r=b$, so $a \xdashrightarrow{u} b \xdashrightarrow{u} c$. In all cases, we see $a \xdashrightarrow{u} b$ or $b \xdashrightarrow{u} c$, so $R$ is closed.  
\end{proof}

\subsection{The poset structure}

Write $\weak_R(S_n)$ for right weak order on $S_n$. The quotient $\weak_R(S_n)/\Theta_w$ is the relation $\leq_{\Theta_w}$ defined by setting $[x]_w \leq_{\Theta_w} [y]_w$ whenever there exist $x' \in [x]_w$ and $y' \in [y]_w$ with $x' \leq_R y'$.

\begin{theorem}
\label{thm:top-is-order-preserving}
Given $w \in S_n$, the map $\Top_w: S_n \to S_n$ is order preserving with respect to right weak order. That is, if $x \leq_R y$ then $\Top_w(x) \leq_R \Top_w(y)$. Furthermore, $\weak_R(S_n)/\Theta_w$ is isomorphic to $P_w$ via the map $[x]_w \mapsto \Bot_w(x)$.
\end{theorem}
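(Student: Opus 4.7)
The plan is to reduce both assertions to the analysis of a single right weak order cover $x \lessdot_R y = xs_i$, with $a := x(i) < x(i+1) =: b$, $u := \Bot_w(x)$, and $v := \Bot_w(y)$. The structural heart of the argument is the dichotomy: either $[x]_w = [y]_w$, in which case $\Top_w(x) = \Top_w(y)$ and $\Bot_w(x) = \Bot_w(y)$ automatically, or $[x]_w \neq [y]_w$ (``Case B''), and then I claim $v = (ab)u$ with $u \lessdot_w v$ a cover in $P_w$.

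The claim has two parts. First, by Proposition~\ref{prop:classes-are-linear-extensions-of-gamma}, $y$ fails to be a linear extension of $\tilde{\Gamma}_w(u)$ exactly when swapping $a, b$ at the consecutive positions $i, i+1$ violates some relation, and the obstruction must be the direct edge $a \xrightarrow{u} b$ since no vertex $c$ can witness a path $a \xdashrightarrow{u} c \xdashrightarrow{u} b$ (such a $c$ would need $x^{-1}(c)$ strictly between $i$ and $i+1$). Hence $(a,b) \in E_w(u)$, and Proposition~\ref{prop:edges-from-E} yields the $P_w$-cover $u \lessdot_w (ab)u$. The main technical obstacle is the second part: verifying that $y$ is itself a linear extension of $\tilde{\Gamma}_w((ab)u)$, which identifies $v$ with $(ab)u$.

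Edges $p \to q$ of $\tilde{\Gamma}_w((ab)u)$ whose pair is covered by Proposition~\ref{prop:v-paths-to-u-paths} convert to a path $p \xdashrightarrow{u} q$ respected by $x$, and the subcases where $p$ or $q$ lies in $\{a, b\}$ then work out because the edge hypotheses rule out $x^{-1}(q) = i+1$ (resp.\ $x^{-1}(p) = i$). The exceptional pairs $(b,q)$ with $q < b, q \neq a$ and $(p,a)$ with $p > a, p \neq b$ require direct analysis along the lines of the proofs of Lemmas~\ref{lem:notes-lemma-4}--\ref{lem:new-path-goes-through-a-b}; combining the length-one condition on $b \to q$ in $\tilde{\Gamma}_w(v)$ with $(a,b) \in E_w(u)$ and using Proposition~\ref{prop:flattening-bruhat}, one produces either a direct edge $a \to q$ in $\tilde{\Gamma}_w(u)$ (when $u^{-1}(q) < u^{-1}(b)$) or the composite $a \to b \to q$ (when $u^{-1}(q) > u^{-1}(b)$), giving $a \xdashrightarrow{u} q$ and hence $y^{-1}(b) = i < x^{-1}(q) = y^{-1}(q)$. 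The edge $b \to a$ in $\tilde{\Gamma}_w(v)$ is trivially respected by $y$ since $y^{-1}(b) = i < i+1 = y^{-1}(a)$.

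With the dichotomy in hand, order-preservation of $\Top_w$ follows from Proposition~\ref{prop:v-paths-to-u-paths}(i): any $c < d$ with $c \xdashrightarrow{v} d$ satisfies $c \xdashrightarrow{u} d$, and the description $T_L(\Top_w(\cdot)) = \{(c,d) : c < d, c \not\xdashrightarrow{\cdot} d\}$ read off from the proof of Proposition~\ref{prop:class-has-min-max-elements} converts this to $T_L(\Top_w(x)) \subseteq T_L(\Top_w(y))$. The same dichotomy shows $\Bot_w$ is order-preserving from $(S_n, \leq_R)$ to $P_w$, so the bijection $[x]_w \mapsto \Bot_w(x)$ is order-preserving from the quotient to $P_w$. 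For the reverse inequality, given a cover $u \lessdot_w v = (ab)u$ in $P_w$ I construct a linear extension $x$ of $\tilde{\Gamma}_w(u)$ with $a$ directly before $b$---possible precisely because $(a,b) \in E_w(u)$ forbids intermediate vertices---and then $y = xs_i$ returns us to Case B, giving $\Bot_w(y) = v$ and $[x]_w < [y]_w$ in the quotient.
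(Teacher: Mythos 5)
Your proof is correct, but it substitutes a combinatorial argument for the paper's geometric one at the crucial step. The paper proves that a right weak cover $x \lessdot_R y$ with $[x]_w \neq [y]_w$ projects to a $P_w$-cover by observing that the cones $C_w(\Bot_w(x))$ and $C_w(\Bot_w(y))$ in the normal fan of $Q_w$ share a facet along the hyperplane of the transposition $t = yx^{-1}$; the uniqueness of that shared facet forces $\Bot_w(y) = t\cdot\Bot_w(x)$, a Bruhat cover. You instead argue entirely via the $\Gamma_w$-graphs: since the consecutive positions $i, i+1$ leave no room for an intermediate vertex, the obstruction to $y$ being a linear extension of $\tilde{\Gamma}_w(\Bot_w(x))$ must be a single transitive-reduction edge $a \xRightarrow{\Bot_w(x)} b$, and you then check via Proposition~\ref{prop:v-paths-to-u-paths} and the inclusion $T_L(\Bot_w(x)) \subseteq T_L(x)$ that $y$ is a linear extension of $\tilde{\Gamma}_w((ab)\Bot_w(x))$, identifying $\Bot_w(y)$. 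This is a sound and more self-contained route, avoiding the normal-fan machinery, at the price of a finer case analysis for the pairs $(b,q)$ and $(p,a)$ excluded from Proposition~\ref{prop:v-paths-to-u-paths}. One small imprecision: in the exceptional cases you speak of ``a direct edge $a \to q$ in $\tilde{\Gamma}_w(u)$''; what you actually produce (and need) is only a path $a \xdashrightarrow{u} q$, obtained from $(aq) \in T_L(u) \subseteq T_L(x)$ via Proposition~\ref{prop:reflection-gives-gamma-path}. You also explicitly verify the reverse direction (each $P_w$-cover lifts to a weak cover with a class change), which the paper's geometric argument handles implicitly through the fan quotient.
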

\begin{proof}
Suppose that $x \lessdot_R y=xs=tx$ this implies that $C_{w_0}(\bs{x})$ and $C_{w_0}(\bs{y})$ share a facet along the hyperplane $H_t$ fixed by the reflection $t$. Suppose further that $[x]_w \neq [y]_w$ and let $u = \Bot_w(x)$ and $v= \Bot_w(y)$. Thus $C_w(\bs{u})$ and $C_w(\bs{u})$ share a facet along $H_t$, so there is an edge of $Q_w$ with vertices $\bs{u}$ and $\bs{v}$. This implies that $v=t'u$ for some $t' \in T$ and that $C_w(\bs{u})$ and $C_w(\bs{v})$ share a facet along $H_{t'}$.  Since the convex cones $C_w(\bs{u})$ and $C_w(\bs{v})$ share at most one facet, we must have in fact that $t=t'$. We have $u \leq_R x$ by Proposition~\ref{prop:class-has-min-max-elements} and we know that $t \not \in T_L(u)$ by Proposition~\ref{prop:weak-characterized-by-inversions} and the fact that $t \not \in T_L(x)$. Thus $\ell(v)>\ell(u)$ and $u \lessdot_w v$. 

Now, by the proof of Proposition~\ref{prop:class-has-min-max-elements}, we have for any $z \in S_n$ that 
\begin{equation}
\label{eq:inversions-of-top-from-graph}
T_L(\Top_w(z))=\{(cd) \mid c<d \text{ and } c \not \xdashrightarrow{\Bot_w(z)} d\}.
\end{equation}
Thus for any $u' \lessdot_w v'=(ab)u$ with $a<b$ we have by Proposition~\ref{prop:v-paths-to-u-paths}(i) and (\ref{eq:inversions-of-top-from-graph}) that 
\[
T_L(\Top_w(u')) \subset T_L(\Top_w(v')).
\]
By Proposition~\ref{prop:weak-characterized-by-inversions} we see $\Top_w(u') <_R \Top_w(v')$. 

This establishes that $P_w \cong \weak_R(S_n)/\Theta_w$, and establishes that $\Top_w$ is order preserving after applying the second paragraph and that fact that $\Top_w(u)=\Top_w(x)$ and $\Top_w(v)=\Top_w(y)$.
\end{proof}

\begin{cor}
\label{cor:bip-poset-is-induced-weak}
Let $w \in S_n$, then the map $\Top_w$ is a poset isomorphism from $P_w$ to $(\Top_w([e,w]), \leq_R)$.
\end{cor}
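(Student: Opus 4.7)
The plan is to derive the corollary directly from Theorem~\ref{thm:top-is-order-preserving} by verifying separately that $\Top_w$ restricts to a bijection on $[e,w]$, that it is order preserving out of $P_w$, and finally that the inequality in $\leq_R$ pulls back to $\leq_w$. For bijectivity, I would use that each $u \in [e,w]$ is the unique element of its class $[u]_w$ lying in $[e,w]$; by Proposition~\ref{prop:class-has-min-max-elements} this element is exactly $\Bot_w(u)$. Hence if $\Top_w(u) = \Top_w(v)$ for $u, v \in [e,w]$, then $[u]_w = [v]_w$ and so $u = v$; surjectivity onto $\Top_w([e,w])$ is tautological, and the inverse map is $\Bot_w$.

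The forward direction is already packaged in the proof of Theorem~\ref{thm:top-is-order-preserving}: the first paragraph there shows that each $P_w$-cover $u \lessdot_w v$ yields $\Top_w(u) <_R \Top_w(v)$, and transitivity lifts this to $u \leq_w v \Rightarrow \Top_w(u) \leq_R \Top_w(v)$.

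The main step is the converse. Given $\Top_w(u) \leq_R \Top_w(v)$ with $u, v \in [e,w]$, I would pick any saturated chain $\Top_w(u) = z_0 \lessdot_R z_1 \lessdot_R \cdots \lessdot_R z_k = \Top_w(v)$ in $\weak_R(S_n)$ and apply $\Bot_w$ termwise. The case analysis in the second paragraph of the proof of Theorem~\ref{thm:top-is-order-preserving} shows that for each cover $z_i \lessdot_R z_{i+1}$, either $[z_i]_w = [z_{i+1}]_w$ (so $\Bot_w(z_i) = \Bot_w(z_{i+1})$) or else $\Bot_w(z_i) \lessdot_w \Bot_w(z_{i+1})$. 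After collapsing repetitions, the sequence $\Bot_w(z_0), \ldots, \Bot_w(z_k)$ produces a chain in $P_w$ from $\Bot_w(\Top_w(u)) = u$ to $\Bot_w(\Top_w(v)) = v$, giving $u \leq_w v$. I do not anticipate any substantive obstacle: all of the work is already done inside Theorem~\ref{thm:top-is-order-preserving}, and the corollary is essentially a translation from the abstract quotient $\weak_R(S_n)/\Theta_w$ to the concrete induced subposet $(\Top_w([e,w]), \leq_R)$ of $\weak_R(S_n)$. The only identity worth recording explicitly is $\Bot_w(\Top_w(u)) = u$ for $u \in [e,w]$, which holds because $\Top_w(u) \in [u]_w$ and the class $[u]_w$ contains exactly one element of $[e,w]$, namely $u$.
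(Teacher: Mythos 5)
Your proof is correct and follows essentially the same route as the paper's: bijectivity from the uniqueness of the class representative in $[e,w]$, the forward direction from the order-preservation of $\Top_w$ in Theorem~\ref{thm:top-is-order-preserving}, and the converse from the fact (also packaged in that theorem) that $\Bot_w$ carries weak order relations to $\leq_w$ relations together with $\Bot_w \circ \Top_w = \mathrm{id}$ on $[e,w]$. The only cosmetic difference is that you unfold the converse into an explicit saturated-chain argument, whereas the paper states the order-preservation of $\Bot_w$ directly.
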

\begin{proof}
It is clear by definition that $\Top_w$ is injective on $[e,w]$, since each equivalence class $[x]_w$ contains a unique element $\Bot_w(x)$ of $[e,w]$, thus it is a bijection onto its image. For $u,v \in [e,w]$ with $u \leq_w v$, by Theorem~\ref{thm:top-is-order-preserving} there exist $u' \in [u]_w$ and $v' \in [v]_w$ with $u' \leq_R v'$. Then Theorem~\ref{thm:top-is-order-preserving} gives that 
\[
\Top_w(u)=\Top_w(u') \leq_R \Top_w(v')=\Top_w(v).
\]

Note that for $v \in [e,w]$ we have $\Bot_w(\Top_w(v))=v$ and that Theorem~\ref{thm:top-is-order-preserving} implies that $\Bot_w$ sends right weak order relations to order relations under $\leq_w$. Thus if $\Top_w(u) \leq_R \Top_w(v)$ then $u \leq_w v$.
\end{proof}

The fact that $P_w$ is a lattice also follows easily from Theorem~\ref{thm:top-is-order-preserving}.

\begin{theorem}
\label{thm:bip-poset-is-lattice}
For any $w \in S_n$, the poset $P_w$ is a lattice, with join operation given by
\[
u \lor_w v = \Bot_w(\Top_w(u) \lor_R \Top_w(v)).
\]
\end{theorem}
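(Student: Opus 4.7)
The plan is to leverage the preceding results, especially Theorem~\ref{thm:top-is-order-preserving} and Corollary~\ref{cor:bip-poset-is-induced-weak}, to verify the proposed join formula directly; the lattice property will then follow from the standard observation that a finite join-semilattice with a minimum is automatically a lattice.

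First I would check that $P_w$ has a minimum element. The identity $e$ lies in $[e,w]$, and since $e$ is the bottom of right weak order on $S_n$, for any $y \in [e,w]$ we have $\Top_w(e) \leq_R \Top_w(y)$ by Theorem~\ref{thm:top-is-order-preserving}; the isomorphism in Corollary~\ref{cor:bip-poset-is-induced-weak} then yields $e \leq_w y$, so $e$ is indeed the minimum of $P_w$.

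Next I would verify the join formula. Fix $u, v \in [e,w]$ and set $z \coloneqq \Bot_w(\Top_w(u) \lor_R \Top_w(v))$; note that $z \in [e,w]$ since every $\Theta_w$-class contains a unique element of $[e,w]$ by Proposition~\ref{prop:class-has-min-max-elements}. Theorem~\ref{thm:top-is-order-preserving} identifies $\weak_R(S_n)/\Theta_w$ with $P_w$ via $[x]_w \mapsto \Bot_w(x)$, so the induced map $\Bot_w \colon (S_n, \leq_R) \to (P_w, \leq_w)$ is order-preserving. The chain $u \leq_R \Top_w(u) \leq_R \Top_w(u) \lor_R \Top_w(v)$ therefore descends via $\Bot_w$ to $u = \Bot_w(u) \leq_w z$, and symmetrically $v \leq_w z$. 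For minimality, suppose $y \in [e,w]$ satisfies $u, v \leq_w y$; applying Corollary~\ref{cor:bip-poset-is-induced-weak} gives $\Top_w(u) \leq_R \Top_w(y)$ and $\Top_w(v) \leq_R \Top_w(y)$, hence $\Top_w(u) \lor_R \Top_w(v) \leq_R \Top_w(y)$ by the universal property of joins in the lattice $\weak_R(S_n)$ (Theorem~\ref{thm:weak-is-lattice}). Applying the order-preserving $\Bot_w$ a final time yields $z \leq_w \Bot_w(\Top_w(y)) = y$, so $z = u \lor_w v$.

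To close the argument I would invoke the standard fact that a finite join-semilattice with a minimum element is a lattice: for any $u, v \in P_w$, the set of common lower bounds is finite and nonempty (it contains $e$), so taking the join of this set produces the meet $u \land_w v$. No step of this plan presents a serious obstacle, because all of the hard combinatorial content has been absorbed into Theorem~\ref{thm:top-is-order-preserving} and, upstream of it, Proposition~\ref{prop:v-paths-to-u-paths}; the remaining work is simply to chase the quotient-theoretic description of $P_w$ through the join in $\weak_R(S_n)$.
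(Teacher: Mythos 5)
Your proposal is correct and follows essentially the same route as the paper: define $z=\Bot_w(\Top_w(u)\lor_R\Top_w(v))$, use the quotient/isomorphism content of Theorem~\ref{thm:top-is-order-preserving} (together with Corollary~\ref{cor:bip-poset-is-induced-weak}) to show $z$ is the least upper bound, and then conclude via the standard fact that a finite join-semilattice with a minimum is a lattice. The only differences are cosmetic — you spell out the existence of the minimum element and the semilattice-to-lattice step, which the paper asserts more briefly.
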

\begin{proof}
Let $z=\Bot_w(\Top_w(u) \lor_R \Top_w(v))$. Then 
\[
u \leq_R \Top_w(u) \leq_R \Top_w(u) \lor_R \Top_w(v),
\]
so by Theorem~\ref{thm:top-is-order-preserving} we have $u \leq_w z$, and similarly $v \leq_w z$. On the other hand, if $y \geq_w u,v$, then by Theorem~\ref{thm:top-is-order-preserving} we have $\Top_w(y) \geq \Top_w(u), \Top_w(v)$ so $\Top_w(y) \geq \Top_w(u) \lor_R \Top_w(v)$. Thus $y \geq_w z$ and we see that $z$ is the join of $u,v$ in $P_w$. Since $P_w$ is a finite poset with a join and a unique minimal element (namely $e$), it also has a meet and is thus a lattice.
\end{proof}

\subsection{The Billey--Fan--Losonczy parabolic map}

Let $S_n(I)$ denote the Young subgroup of $S_n$ generated by a subset $I$ of the simple reflections. For $w \in S_n$ let $m(w,I)$ denote the unique maximal element of $S_n(I) \cap [e,w]$ under Bruhat order \cite[Thm.~2.2]{billey-parabolic}; $w \mapsto m(w,I)$ is called the \emph{parabolic map}. See Richmond and Slofstra \cite[Thm.~3.3 \& Prop.~4.2]{Richmond-fiber-bundle} for the importance of the parabolic map in determining the fiber bundle structure of Schubert varieties.

\begin{prop}
\label{prop:join-of-atoms-is-top}
Let $w \in S_n$, and let $s_{i_1},\ldots,s_{i_k}$ be the simple reflections appearing in some (equivalently, any) reduced word for $w$, then:
\[
s_{i_1} \lor_w \cdots \lor_w s_{i_k} = w.
\]
\end{prop}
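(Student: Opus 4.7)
The plan is to combine the lattice formula
\[
s_{i_1} \lor_w \cdots \lor_w s_{i_k} = \Bot_w(\Top_w(s_{i_1}) \lor_R \cdots \lor_R \Top_w(s_{i_k}))
\]
from Theorem~\ref{thm:bip-poset-is-lattice} with the classical identity $s_{i_1} \lor_R \cdots \lor_R s_{i_k} = w_0(I)$ in right weak order, where $I = \{s_{i_1},\ldots,s_{i_k}\}$ and $w_0(I)$ denotes the longest element of the parabolic subgroup $S_n(I)$. Setting $y := \Top_w(s_{i_1}) \lor_R \cdots \lor_R \Top_w(s_{i_k})$, the inequalities $\Top_w(s_{i_j}) \geq_R s_{i_j}$ yield $y \geq_R w_0(I)$, and it suffices to show $\Bot_w(y) = w$; by Proposition~\ref{prop:classes-are-linear-extensions-of-gamma} this is equivalent to $y$ being a linear extension of $\Gamma_w(w)$.

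The key step is to verify that $w_0(I)$ itself is a linear extension of $\Gamma_w(w)$. Any edge $a \to b$ of $\tilde{\Gamma}_w(w)$ arises from a Bruhat cover $w(ij) \precdot w$ with $i < j$, so $a = w(i) > w(j) = b$. Since $w$ lies in $S_n(I)$, it preserves the block decomposition of $[n]$ determined by $I$ (the maximal intervals of consecutive integers with all intermediate simple reflections in $I$); the positions $i, j$, and hence the values $a, b$, therefore lie in a common block. Because $w_0(I)$ reverses each block, it places the larger value $a$ before $b$, so $w_0(I)^{-1}(a) < w_0(I)^{-1}(b)$, as required. Equivalently, each transposition $(b,a)$ demanded by an edge of $\Gamma_w(w)$ belongs to $T_L(w_0(I))$.

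To conclude, since $y \geq_R w_0(I)$, Proposition~\ref{prop:weak-characterized-by-inversions} gives $T_L(y) \supseteq T_L(w_0(I))$, so $y$ contains all the required inversions and is itself a linear extension of $\Gamma_w(w)$. Thus $y \in [w]_w$, so $\Bot_w(y) = w$, and the lattice formula yields $s_{i_1} \lor_w \cdots \lor_w s_{i_k} = w$.

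The main obstacle is the middle step, showing that $w_0(I)$ extends $\Gamma_w(w)$. This hinges on reading the edges of $\Gamma_w(w)$ as the Bruhat covers of $w$ from below and exploiting the block-preservation of elements of $S_n(I)$ to confine each such edge to a single block of $I$. The remaining steps are formal manipulations via the join formula of Theorem~\ref{thm:bip-poset-is-lattice} and the inversion-set criterion for weak order.
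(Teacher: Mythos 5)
Your proof is correct, and its overall skeleton matches the paper's: both start from the join formula of Theorem~\ref{thm:bip-poset-is-lattice}, use $\Top_w(s_{i_j}) \geq_R s_{i_j}$ to push $y := \Top_w(s_{i_1}) \lor_R \cdots \lor_R \Top_w(s_{i_k})$ above $w_0(I)$, and reduce to showing $\Bot_w(w_0(I)) = w$. Where you diverge is in that last step. The paper argues abstractly: since $[e,w] \subseteq S_n(I)$, every $u \in [e,w]$ satisfies $u \leq_R w_0(I)$, and because $\Bot_w : \weak_R(S_n) \to P_w$ is order-preserving (Theorem~\ref{thm:top-is-order-preserving}), it follows that $\Bot_w(w_0(I)) \geq_w u$ for all such $u$, forcing $\Bot_w(w_0(I)) = w$. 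You instead verify concretely, via Proposition~\ref{prop:classes-are-linear-extensions-of-gamma}, that $w_0(I)$ is a linear extension of $\Gamma_w(w)$: noting that every edge of $\tilde\Gamma_w(w)$ comes from a downward Bruhat cover and thus joins values in the same $I$-block, and that $w_0(I)$ reverses each block. Your block argument is a hands-on substitute for the paper's one-line appeal to order-preservation of $\Bot_w$; both rely on essentially the same ingredients, but yours is a bit more self-contained and makes explicit the inversion-set calculation. One small point worth keeping in mind: passing from "$w_0(I)$ is a linear extension" to "$y \geq_R w_0(I)$ is a linear extension" works here precisely because every edge of $\Gamma_w(w)$ points from a larger value to a smaller one (as you note), so the linear-extension condition is a lower bound on $T_L(\cdot)$; for general $\Gamma_w(u)$ with $u \neq w$ this monotonicity under $\leq_R$ would fail, so the argument is genuinely specific to the top element.
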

\begin{proof}
By Theorem~\ref{thm:bip-poset-is-lattice} we have
\[
s_{i_1} \lor_w \cdots \lor_w s_{i_k} = \Bot_w(\Top_w(s_{i_1}) \lor_R \cdots \lor_R \Top_w(s_{i_k})). 
\]
Now, $\Top_w(s_{i_1}) \lor_R \cdots \lor_R \Top_w(s_{i_k}) \geq_R s_{i_1} \lor_R \cdots \lor_R s_{i_k}= w_0(J)$ where $J=\{s_{i_1},\ldots, s_{i_k}\}$ and where $w_0(J)$ denotes the unique longest element of the subgroup of $S_n$ generated by $J$ (here we have used \cite[Lem.~3.2.3]{bjorner-brenti} for the equality). Thus $\Bot_w(\Top_w(s_{i_1}) \lor_R \cdots \lor_R \Top_w(s_{i_k})) \geq_w \Bot_w(w_0(J))=w$, where this last equality follows since $w_0(J) \geq_R u$ for all $u \in [e,w]$. Since $w$ is the maximal element of $P_w$, we get the desired equality.
\end{proof}

\begin{theorem}
\label{thm:join-of-atoms}
Let $w \in S_n$, and let $I$ be a set of simple generators, then:
\[
m(w,I)=\bigvee_w \{s_i \in I \mid s_i \preceq w\}.
\]
\end{theorem}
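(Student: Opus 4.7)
The plan is to set $u = m(w,I)$ and $J = \{s_i \in I : s_i \preceq w\}$, and first observe that $J$ equals the set of simple reflections appearing in a reduced word for $u$: every $s_i \in J$ lies in $S_n(I) \cap [e,w]$ and is hence $\preceq u$ by Bruhat-maximality of $u$, while conversely any simple $s_i \preceq u$ is in $I$ (since $u \in S_n(I)$) and in $[e,w]$. Let $w_0(J)$ denote the longest element of $W_J = S_n(J)$; then $u \leq_R w_0(J)$, and $T_L(w_0(J))$ is exactly the set of reflections in $W_J$ (a standard fact about longest elements of parabolic subgroups of Coxeter groups).

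The main step is to show $w_0(J) \in [u]_w$, which by Proposition~\ref{prop:classes-are-linear-extensions-of-gamma} amounts to checking that $w_0(J)$ is a linear extension of $\tilde{\Gamma}_w(u)$. I would case-split on each edge $c \xrightarrow{u} d$ according to the sign of $c - d$. If $c > d$, then the value $c$ precedes the value $d$ in $u$, witnessing $(cd) \in T_L(u) \subseteq T_L(w_0(J))$; this forces $c, d$ to lie in the same $J$-block, and since $w_0(J)$ reverses each block, $w_0(J)^{-1}(c) < w_0(J)^{-1}(d)$. If $c < d$, the edge gives $(cd)u \preceq w$ with $\ell((cd)u) = \ell(u)+1$; were $(cd) \in W_J$, the element $(cd)u$ would belong to $S_n(I) \cap [e,w]$, contradicting the Bruhat-maximality of $u$, so $c, d$ lie in different $J$-blocks and the inequality again holds.

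With $w_0(J)\,\Theta_w\, u$ established, we have $\Bot_w(w_0(J)) = u$ and $\Top_w(w_0(J)) = \Top_w(u)$. To finish, apply Theorem~\ref{thm:bip-poset-is-lattice} iteratively to obtain $\bigvee_w J = \Bot_w(\bigvee_R \{\Top_w(s_i) : s_i \in J\})$. Since each $s_i \leq_R w_0(J)$, order-preservation of $\Top_w$ (Theorem~\ref{thm:top-is-order-preserving}) gives $\Top_w(s_i) \leq_R \Top_w(u)$, so $\bigvee_R \{\Top_w(s_i)\} \leq_R \Top_w(u)$ and hence $\bigvee_w J \leq_w u$ by order-preservation of $\Bot_w$. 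Conversely, $\Top_w(s_i) \geq_R s_i$ yields $\bigvee_R \{\Top_w(s_i)\} \geq_R w_0(J)$, giving $\bigvee_w J \geq_w \Bot_w(w_0(J)) = u$. Combining, $\bigvee_w J = u = m(w,I)$. The main obstacle is the linear-extension verification above, which combines the characterization of $T_L(w_0(J))$ as the reflections in $W_J$ with the Bruhat-maximality of $u$ to handle both orientations of an edge.
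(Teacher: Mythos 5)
Your proof is correct but follows a genuinely different route from the paper's. The paper argues geometrically: it shows that the vertices $\{\bs{u} : u \in S_n(I)\cap[e,w]\}$ are cut out by supporting hyperplanes and hence form a face $F_I$ of $Q_w$, then invokes the Tsukerman--Williams result that any face of $Q_w$ containing $\bs{e}$ is itself a Bruhat interval polytope $Q_y$, so that $m(w,I)=y$ and $P_y$ sits as an interval inside $P_w$; the claim then reduces to Proposition~\ref{prop:join-of-atoms-is-top} applied inside $Q_y$. Your argument is instead a direct, purely combinatorial verification that $w_0(J)$ is a linear extension of $\tilde{\Gamma}_w(m(w,I))$, via the two-case analysis on the sign of $c-d$ for an edge $c\xrightarrow{u}d$ (downward edges must lie within a $J$-block since they come from $T_L(u)\subseteq T_L(w_0(J))$; upward edges must cross $J$-blocks or else $(cd)u$ would contradict Bruhat-maximality of $m(w,I)$). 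This yields $\Bot_w(w_0(J))=m(w,I)$ directly from Proposition~\ref{prop:classes-are-linear-extensions-of-gamma}, and the final sandwich using $s_i \leq_R \Top_w(s_i)$ and $\Top_w(s_i)\leq_R\Top_w(w_0(J))$ then closes the computation. What your route buys: it avoids the face-of-the-polytope geometry and the Tsukerman--Williams input entirely, stays self-contained within the $\Gamma$-graph/linear-extension framework of Section~\ref{sec:lattice}, and makes explicit \emph{why} $w_0(J)$ is $\Theta_w$-equivalent to $m(w,I)$ rather than deducing it from face structure; in particular your proof subsumes Proposition~\ref{prop:join-of-atoms-is-top} as the special case where $I$ contains the support of $w$, rather than building on it. What the paper's route buys is brevity and a cleaner geometric picture of $m(w,I)$ as the Bruhat-maximum of a face. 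One small remark: your reduction to showing $w_0(J)\in[m(w,I)]_w$ implicitly uses that $T_L(u)\subseteq T\cap W_J$ for $u\in W_J$ (equivalently that $W_J$ is a Bruhat order ideal, via the subword property), which is standard but worth flagging since it is what makes both the $u\leq_R w_0(J)$ claim and the ``same $J$-block'' step in the $c>d$ case go through.
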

\begin{proof}
Let $I'=\{s_i \in I \mid s_i \preceq w\}$; clearly any $s \in I$ with $s \not \preceq w$ affects neither $m(w,I)$ nor the join, so we may reduce to the case $I'=I$. If $I=\{s_1,\ldots,\widehat{s_a},\ldots,s_{n-1}\}$ is a maximal proper subset of the simple reflections, then the set of vertices $\bs{u}$ of $Q_w$ for $u \in S_n(I) \cap [e,w]$ can be cut out by the hyperplanes
\[
\{(x_1,\ldots,x_n) \in \mathbb{R}^n \mid \sum_{i=1}^a x_i = \sum_{i=1}^a i\}
\]
and
\[
\{(x_1,\ldots,x_n) \in \mathbb{R}^n \mid \sum_{i=a+1}^n x_i = \sum_{i=a+1}^n i\}.
\]
Since these are supporting hyperplanes of $Q_w$, this set of vertices are the vertices of some face $F_I$ of $Q_w$. If $I$ is not maximal, we can obtain a face by intersecting faces for maximal subsets.

Since faces of $Q_w$ containing $e$ are themselves of the form $Q_y$ by \cite[Thm.~4.1]{Tsukerman-Williams}, we see that $m(w,I)=y$ exists. Now, $P_y$ is an interval of, and thus a sublattice of, $P_w$. Thus, since 
\[
\bigvee_y I = y = m(w,I)
\]
by Proposition~\ref{prop:join-of-atoms-is-top}, we obtain the desired result.
\end{proof}

\section{Directionally simple polytopes}
\label{sec:directionally-simple}
Given a polytope $Q \subset \R^d$, say that a cost vector $c \in \R^d$ is \emph{generic} if $c$ is not orthogonal to any edge of $Q$. A generic cost vector induces an acyclic orientation on the 1-skeleton $G(Q)$ by taking edges to be oriented in the direction of greater inner product with $c$; we write $G_c(Q)$ for the resulting acyclic directed graph. It is clear that every face $F$ of $Q$ contains a unique source $\min_c(F)$ and sink $\max_c(F)$ with respect to this orientation.

\begin{defin}
\label{def:directionally-simple}
We say that a polytope $Q \subset \R^d$ is \emph{directionally simple} with respect to the generic cost vector $c$ if for every vertex $v$ of $Q$ and every set $E$ of edges of $G_c(Q)$ with source $v$ there exists a face $F$ of $Q$ containing $v$ whose set of edges incident to $v$ is exactly $E$.
\end{defin}

Since any subset of the edges incident to a vertex $v$ in a simple polytope spans a face, the following fact is clear:

\begin{prop}
A simple polytope $Q \subset \R^d$ is directionally simple with respect to any generic cost vector.
\end{prop}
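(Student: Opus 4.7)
The plan is to reduce the claim directly to a classical fact about simple polytopes, namely that at each vertex of a simple $d$-polytope, every subset of the edges incident to that vertex spans a face.

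Concretely, fix a simple polytope $Q \subset \R^d$, a generic cost vector $c$, a vertex $v$ of $Q$, and a set $E$ of edges of $G_c(Q)$ whose source is $v$. Since $Q$ is simple, the vertex figure at $v$ is a simplex, so the set $N(v)$ of edges of $Q$ incident to $v$ has exactly $d$ elements and is in inclusion-preserving bijection with the faces of $Q$ containing $v$: for any subset $S \subseteq N(v)$, there is a unique face $F_S$ of $Q$ containing $v$ whose set of edges at $v$ is exactly $S$ (and $\dim F_S = |S|$). This is a standard consequence of simplicity and can be cited from any reference on polytopes (e.g.\ Ziegler's book), or proved directly by noting that each edge at $v$ determines a facet of the vertex figure, and intersections of facets of the simplex are again faces.

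Now the set $E$ is, by hypothesis, a subset of $N(v)$ (every edge of $G_c(Q)$ sourced at $v$ is in particular an edge of $Q$ incident to $v$). Taking $S=E$ in the statement above yields a face $F_E$ of $Q$ containing $v$ whose edges incident to $v$ form exactly the set $E$, which is precisely what the definition of directional simplicity requires. Since $c$ was an arbitrary generic cost vector, this completes the argument.

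There is essentially no obstacle here: the orientation coming from $c$ plays no role beyond singling out the subset $E \subseteq N(v)$, and the existence of $F_E$ is the defining combinatorial feature of simple polytopes. The main thing to be careful about is phrasing the appeal to the subset/face correspondence precisely, so the reader sees that we are using both containment of $v$ and the exact identification of edges at $v$.
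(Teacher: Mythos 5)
Your proposal is correct and follows exactly the same reasoning as the paper, which simply observes that in a simple polytope every subset of the edges at a vertex spans a face and declares the claim immediate; you have merely spelled out that standard fact (the vertex figure being a simplex) in more detail.
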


\subsection{$Q_w$ is directionally simple}

Theorem~\ref{thm:bip-is-directionally-simple} was proven in \cite[Prop.~4.5]{lee-poincare} by an involved direct computation; here we give a new proof using the results of Section~\ref{sec:lattice}.

\begin{theorem}
\label{thm:bip-is-directionally-simple}
Let $w \in S_n$, then $Q_w$ is a directionally simple polytope with respect to the cost vector $c=(n,n-1,\ldots,1)$.
\end{theorem}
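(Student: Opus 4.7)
The plan is to combine Theorem~\ref{thm:top-is-order-preserving}, which realizes $P_w$ as the quotient $\weak_R(S_n)/\Theta_w$, with the fact that the permutohedron $\Perm_n$ is simple (hence directionally simple for any generic cost). The strategy is to lift the problem from $Q_w$ to $\Perm_n$, solve it there using simplicity, and then descend via the coarsening of normal fans to produce the required face of $Q_w$.

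First, I will check that $c=(n,n{-}1,\ldots,1)$ induces the orientation of $P_w$ on $G(Q_w)$. For a cover $u \lessdot_w v=(ab)u$ with $a<b$ (so $u^{-1}(a)<u^{-1}(b)$, since $\ell(v)=\ell(u)+1$), direct calculation gives
\[
c \cdot (\bs{v}-\bs{u}) = (b-a)(u^{-1}(b)-u^{-1}(a))>0.
\]
Hence outgoing $c$-edges at a vertex $\bs{u}$ of $Q_w$ are in bijection with the upward $P_w$-covers $\{u \lessdot_w v\}$, so the hypotheses of directional simplicity translate into statements about subsets of upward covers in $P_w$.

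Fix $u \in [e,w]$ and a set $E$ of outgoing edges at $\bs{u}$ corresponding to covers $\{u \lessdot_w v_i\}$, and let $\tilde{u}=\Top_w(u)$. The braid arrangement refines the normal fan of $Q_w$, and by Proposition~\ref{prop:class-has-min-max-elements} the cone $C(\tilde{u})\subseteq C_w(u)$ is the top-dimensional braid cone sitting ``at the top'' of $C_w(u)$ in the $c$-direction. I plan to show that each upward facet of $C_w(u)$ contains at least one simple-reflection wall of $C(\tilde{u})$; equivalently, for each $v_i$ there exists a simple reflection $s \notin D_R(\tilde{u})$ with $\Bot_w(\tilde{u}s)=v_i$. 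Let $\tilde{E}$ denote the union of these preimages over $i$. Since $\Perm_n$ is simple, there is a face $\tilde{F} \subseteq \Perm_n$, namely the convex hull of $\{\bs{y}:y \in [\tilde{u},\tilde{u}\cdot w_0(\tilde{E})]_R\}$, whose outgoing $c$-edges at $\bs{\tilde{u}}$ are exactly $\tilde{E}$. Choosing a generic $c^*$ in the relative interior of the braid-fan normal cone of $\tilde{F}$, I set $F \coloneqq \arg\max_{Q_w} c^* \cdot x$. The containment $c^* \in C(\tilde{u})\subseteq C_w(u)$ forces $\bs{u} \in F$; for each $s \in \tilde{E}$, $c^* \in C(\tilde{u}s)\subseteq C_w(\Bot_w(\tilde{u}s))$ forces $\bs{v_i} \in F$; and genericity of $c^*$ prevents $F$ from containing $\bs{v}$ for any other upward cover $v$ of $u$. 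Thus $F$ will be a face of $Q_w$ containing $\bs{u}$ whose outgoing edges at $\bs{u}$ are exactly $E$.

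The main obstacle will be the geometric ``surjectivity'' step (showing each upward facet of $C_w(u)$ is accessed from $C(\tilde{u})$ by a simple-reflection wall) together with the genericity argument for $c^*$. The subtlety is that distinct simple reflections $s \notin D_R(\tilde{u})$ may have $\tilde{u}s$ in the same $\Theta_w$-class, so multiple braid-fan walls can collapse to a single facet of $C_w(u)$. Defining $\tilde{E}$ as the \emph{full} preimage of $E$ under this coarsening—rather than a minimal lift—is what will make the construction of $F$ consistent, and this is precisely the place where the quotient realization of Theorem~\ref{thm:top-is-order-preserving} yields the promised ``direct'' proof.
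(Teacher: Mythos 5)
Your proposal is essentially the paper's approach: lift the configuration of upward edges at $\bs{u}$ to the simple polytope $\Perm_n$ via $\Top_w$, use simplicity there, and descend via the coarsening of normal fans; choosing $c^*$ in the relative interior of the braid normal cone of $\tilde{F}$ is just an unwound version of saying that the image of a face of $\Perm_n$ is a face of $Q_w$. Two comments on the details you flag as obstacles. First, the ``collapse'' you worry about cannot occur: if $s_1\neq s_2$ are both ascents of $\tilde u=\Top_w(u)$ with $\tilde u s_1$ and $\tilde u s_2$ in the same $\Theta_w$-class, then since that class is a right-weak-order interval (Proposition~\ref{prop:class-has-min-max-elements}) and $\tilde u s_1 \wedge_R \tilde u s_2 = \tilde u$, it would contain $\tilde u$, forcing $[\tilde u s_1]_w = [u]_w$, a contradiction. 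So each cover of $u$ in $P_w$ has at most one preimage and your full-preimage precaution is unnecessary (though harmless). Second, the ``surjectivity'' step you name as the main obstacle---that every upward facet of $C_w(u)$ is reached from $C(\tilde u)$ across a single simple-reflection wall---is precisely the assertion the paper obtains from Theorem~\ref{thm:top-is-order-preserving}, namely that $z\mapsto\Bot_w(z)$ gives a bijection from weak-order upper covers of $\Top_w(u)$ to covers of $u$ in $P_w$. You correctly identify this as the crux, but you do not close it, so as written the proposal is an outline with the same skeleton as the paper's proof rather than a complete alternative.
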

\begin{proof}
The vector $c$ is chosen so that $G_c(Q_w)$ coincides with the Hasse diagram of $P_w$, with the outward edges from a vertex $\bs{u}$ corresponding to the upper covers of $u$ in $P_w$. By Theorem~\ref{thm:top-is-order-preserving} the set $A=\{z \in S_n \mid \Top_w(u) \lessdot_R z\}$ of weak order upper covers of $\Top_w(u)$ is in bijection with the set $B=\{v \in P_w \mid u\lessdot_w v\}$ of upper covers of $u$ in $P_w$, via the map $z \mapsto \Bot_w(z)$. Since weak order is the 1-skeleton poset of the permutohedron $\Perm_n$, and since $\Perm_n$ is a simple polytope, for any $E \subseteq A$, there is a face $F$ of $\Perm_n$ whose collection of edges incident to $\bs{\Top_w(u)}$ are exactly those connecting $\bs{\Top_w(u)}$ to $\bs{z}$ for $z \in E$. Since $Q_w$ is a generalized permutohedron, the set $\{\bs{\Bot_w(z)} \mid \bs{z} \in F\}$ are the vertices of some face $F'$ of $Q_w$, witnessing the upper simplicity of $Q_w$.
\end{proof}

Theorem~\ref{thm:bip-is-directionally-simple} shows that $Q_w$ is always directionally simple, in Section~\ref{sec:monotone} we will determine when $Q_w$ is in fact simple.

\subsection{$h$-vectors of directionally simple polytopes}
The \emph{f-vector} of a polytope $Q \subset \R^d$ is the tuple $f(Q)=(f_0,\ldots,f_d)$ where $f_i$ is the number of $i$-dimensional faces of $Q$. The \emph{h-vector} $h(Q)$ is defined by the equality of polynomials
\begin{equation}
\label{eq:f-and-h-vectors}
\sum_{i=0}^d f_i(x-1)^i = \sum_{k=0}^d h_k x^k.
\end{equation}

\begin{prop}
\label{prop:dir-simple-h-vec}
Let $Q \subset \R^d$ be directionally simple with respect to the generic cost vector $c$, with $h$-vector $h(Q)=(h_0,\ldots,h_d)$. Then for all $k=0,\ldots,d$ the entry $h_k$ is the number of vertices of $Q$ with out-degree exactly $k$ in $G_c(Q)$.
\end{prop}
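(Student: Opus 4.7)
The plan is to reduce the proposition to the $f$-vector identity
\begin{equation*}
f_i = \sum_{v} \binom{d(v)}{i} \qquad \text{for all } i = 0, \ldots, d,
\end{equation*}
where the sum runs over vertices of $Q$ and $d(v)$ denotes the out-degree of $v$ in $G_c(Q)$. Granting this, the $h$-vector formula follows from a one-line binomial computation: substituting into (\ref{eq:f-and-h-vectors}) and applying the binomial theorem in the form $(1+(x-1))^{d(v)} = x^{d(v)}$ yields
\begin{equation*}
\sum_{k=0}^d h_k x^k = \sum_{i=0}^d f_i (x-1)^i = \sum_v \sum_i \binom{d(v)}{i}(x-1)^i = \sum_v x^{d(v)} = \sum_{k=0}^d \#\{v : d(v)=k\}\, x^k,
\end{equation*}
and comparing coefficients gives the claim.

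To establish the $f$-vector identity, I would construct a bijection from faces of $Q$ to pairs $(v, E)$ where $v$ is a vertex of $Q$ and $E$ is a subset of the outgoing edges at $v$ in $G_c(Q)$, with the dimension-matching property $\dim(F) = |E|$. The natural candidate sends a face $F$ to the pair $(\min_c(F), E_F)$, where $E_F$ is the set of edges of $F$ incident to its source $\min_c(F)$; every such edge is outgoing at the source in $G_c(Q)$, so the map is well-defined, and surjectivity is precisely the existence clause of Definition~\ref{def:directionally-simple}.

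The substantive step is proving injectivity together with the dimension equality $\dim(F) = |E_F|$. My plan is to show that in a directionally simple polytope, the source vertex $\min_c(F)$ of any face $F$ is a \emph{simple} vertex of $F$, meaning the tangent cone of $F$ at $\min_c(F)$ is simplicial with $|E_F|$ rays. This forces $\dim(F) = |E_F|$, and recovers $F$ uniquely from the pair $(\min_c(F), E_F)$ as the face whose tangent cone at the source is spanned by the specified rays. The simplicial-tangent-cone claim itself should follow from directional simplicity by observing that all $2^{|E_F|}$ subsets of $E_F$ are realized as edge-sets-at-$\min_c(F)$ by faces of $Q$ containing $\min_c(F)$; this Boolean structure forces the local combinatorics of $F$ at its source to match that of a simplicial cone with rays indexed by $E_F$.

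The main obstacle is pinning down this simplicial-tangent-cone claim at the source of every face, since directional simplicity is phrased only in terms of the existence of faces realizing prescribed outgoing edge sets, not in terms of local cone structure. Once it is in hand, the bijection yields the $f$-vector identity and the $h$-vector formula drops out of the binomial calculation above, exactly as in the classical proof for simple polytopes.
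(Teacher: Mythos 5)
Your approach is essentially the same as the paper's, which compresses the $f$-vector identity $f_i = \sum_v \binom{d(v)}{i}$ into the single assertion ``each vertex with out-degree $k$, by upper simplicity, is the bottom vertex of exactly $\binom{k}{i}$ $i$-faces'' and then reparametrizes the $f$-to-$h$ transform. The obstacle you flag is real and the paper glosses over it, but your intended route does close it. Here is the missing step made precise: fix a face $F$ with source $v = \min_c(F)$ and edge set $E_F$ at $v$, and let $E' \subseteq E_F$. The map $G \mapsto T_v G$ is a poset isomorphism between faces of $Q$ containing $v$ and faces of the tangent cone $T_v Q$, and every face of a pointed cone is the unique smallest face containing its set of extreme rays. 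So the face $F'$ supplied by directional simplicity, with edges exactly $E'$ at $v$, has $T_v F'$ equal to the smallest face of $T_v Q$ containing the rays $E'$; since $T_v F$ also contains those rays, $T_v F' \subseteq T_v F$ and hence $F' \subseteq F$. Therefore all $2^{|E_F|}$ subsets of $E_F$ are realized by distinct faces of $Q$ that lie inside $F$ and contain $v$, so the face lattice of the cone $T_v F$ is Boolean of rank $|E_F|$. As the face lattice of a $d$-dimensional pointed cone is graded by dimension with rank $d$, this forces $\dim F = |E_F|$ and $T_v F$ simplicial. Injectivity of $F \mapsto (\min_c(F), E_F)$ is then immediate (a face through $v$ is determined by its edges at $v$, again because faces of $T_v Q$ are determined by their extreme rays), and your $f$-vector identity and the binomial computation finish the proof exactly as the paper's reparametrization does.
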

\begin{proof}
First note that, since the cost vector $c$ is generic, each face of $Q$ has a unique minimal vertex, that is, a vertex which is a source when we restrict the graph $G_c(Q)$ to the vertices from $Q$. Let $g_k$ be the number of vertices of $Q$ with out-degree exactly $k$ in $G_c(Q)$. We can count $i$-faces according to their bottom vertex. Each vertex with out-degree $k$, by upper simplicity, is the bottom vertex of exactly ${k \choose i}$ faces. Thus we have:
\[
\sum_{i=0}^d f_i x^i = \sum_{k=0}^d g_k (x+1)^k.
\]
But this is just a reparametrization of (\ref{eq:f-and-h-vectors}), so $g_k=h_k$.
\end{proof}

\begin{remark}
One implication of Proposition~\ref{prop:dir-simple-h-vec} is that $h_i \geq 0$ for $i=0,\ldots,d$. This by itself is already a very special property of directionally simple polytopes; indeed $h$-vectors of non-simple polytopes are rarely considered, because they are rarely positive or otherwise interesting.
\end{remark}

For $u \in S_n$, write $\asc(u)$ for the number $n-1-|D_R(u)|$ of right ascents of $u$. Corollary~\ref{cor:h-vec-of-bip} below is an extension to $Q_w$ of the kind of interpretation for $h$-vectors of \emph{simple} generalized permutohedra given by Postnikov--Reiner--Williams \cite[Thm.~4.2]{postnikov-reiner-williams}. 

\begin{cor}
\label{cor:h-vec-of-bip}
Let $(h_0,h_1,\ldots)$ be the $h$-vector of $Q_w$, then for all $k$ we have:
\[
h_k = \left| \{ z \in \Top_w([e,w]) \mid \asc(z)=k\} \right|.
\]
\end{cor}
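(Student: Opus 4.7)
The plan is to combine Theorem~\ref{thm:bip-is-directionally-simple} and Proposition~\ref{prop:dir-simple-h-vec} with the bijection between upper covers in $P_w$ and upper covers in right weak order that was established along the way. First, by Theorem~\ref{thm:bip-is-directionally-simple} the polytope $Q_w$ is directionally simple with respect to the cost vector $c=(n,n-1,\ldots,1)$, so Proposition~\ref{prop:dir-simple-h-vec} applies and tells us that
\[
h_k = |\{u \in [e,w] \mid \deg^+_c(\bs{u})=k\}|,
\]
where $\deg^+_c$ denotes out-degree in $G_c(Q_w)$.

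Next, I would recall (as noted in the first sentence of the proof of Theorem~\ref{thm:bip-is-directionally-simple}) that $G_c(Q_w)$ coincides with the Hasse diagram of $P_w$, so $\deg^+_c(\bs{u})$ is exactly the number of upper covers of $u$ in $P_w$. The bijection constructed in that same proof, namely $z \mapsto \Bot_w(z)$, identifies the set $\{z \in S_n \mid \Top_w(u) \lessdot_R z\}$ of right weak order covers of $\Top_w(u)$ with the set of $P_w$-covers of $u$. Therefore
\[
\deg^+_c(\bs{u}) = |\{z \mid \Top_w(u) \lessdot_R z\}|.
\]

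Finally, a standard fact about right weak order is that the upper covers of a permutation $y$ are exactly the elements $ys_i$ where $i$ ranges over the right ascents of $y$; hence the number of such covers is $\asc(y)$. Applying this with $y=\Top_w(u)$ gives $\deg^+_c(\bs{u}) = \asc(\Top_w(u))$. Since, by Corollary~\ref{cor:bip-poset-is-induced-weak}, $\Top_w$ is a bijection from $[e,w]$ onto $\Top_w([e,w])$, we may re-index the count over $z = \Top_w(u)$, yielding
\[
h_k = |\{z \in \Top_w([e,w]) \mid \asc(z)=k\}|,
\]
as desired. There is no genuine obstacle here: the entire argument is a bookkeeping assembly of previously proven statements, with the only thing to verify carefully being that the bijection $z \mapsto \Bot_w(z)$ genuinely sends \emph{all} weak order upper covers of $\Top_w(u)$ to \emph{all} $P_w$-upper covers of $u$, which is exactly the content used in Theorem~\ref{thm:bip-is-directionally-simple}.
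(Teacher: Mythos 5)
Your proof is correct and follows essentially the same route as the paper's: the paper also observes that by the argument inside Theorem~\ref{thm:bip-is-directionally-simple} the out-degree of $\bs{u}$ in $G_c(Q_w)$ equals $\asc(\Top_w(u))$, and then invokes Proposition~\ref{prop:dir-simple-h-vec}. You simply spell out the bookkeeping (the $z\mapsto\Bot_w(z)$ bijection, the weak-order cover/ascent correspondence, and the re-indexing via Corollary~\ref{cor:bip-poset-is-induced-weak}) more explicitly.
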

\begin{proof}
As explained in the proof of Theorem~\ref{thm:bip-is-directionally-simple}, the number of ascents of $\Top_w(u)$ is exactly the out-degree on $\bs{u}$ in $G_c(Q_w)$, so the result follows by Proposition~\ref{prop:dir-simple-h-vec}.
\end{proof}

As explained in \cite{lee-poincare}, the $h$-vector of $Q_w$ also gives the Poincar\'{e} polynomial of the toric variety $Y_w$, so Corollary~\ref{cor:h-vec-of-bip} gives a new formula for that invariant. We can also resolve an open problem raised in \cite{lee-survey}:

\begin{prop}[Resolves Problem 6.1 of \cite{lee-survey}]
\label{prop:smooth-if-palindromic}
The variety $Y_w$ is smooth if and only if its Poincar\'{e} polynomial is palindromic.
\end{prop}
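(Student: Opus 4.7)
The plan is to reduce the proposition to the purely polytopal statement: for any convex polytope $Q$ of dimension $d$, the $h$-vector $(h_0, \ldots, h_d)$ is palindromic if and only if $Q$ is simple. Once this is established, the proposition follows by combining it with Theorem~\ref{thm:intro-smooth} (which identifies smoothness of $Y_w$ with simplicity of $Q_w$) and the identification, recalled from \cite{lee-poincare} in the paragraph preceding the proposition, of the Poincar\'e polynomial of $Y_w$ with the $h$-polynomial of $Q_w$.

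The forward direction (simple implies palindromic) is the classical Dehn--Sommerville relations and does not require any particular features of $Q_w$. For the converse, I would assume $h_k = h_{d-k}$ for all $k$ and deduce simplicity as follows. Reading off (\ref{eq:f-and-h-vectors}) gives $f_0 = \sum_k h_k$ and $f_1 = \sum_k k h_k$, and palindromicity yields $\sum_k k h_k = \sum_k (d-k) h_k$, i.e. $2 f_1 = d f_0$, so the average vertex degree of $Q_w$ is exactly $d$. On the other hand, the vertex figure at any vertex of a $d$-polytope is a $(d-1)$-polytope, which must have at least $d$ vertices; hence every vertex of $Q_w$ has degree at least $d$, with equality iff the vertex figure is a $(d-1)$-simplex iff the polytope is simple at that vertex. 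Combining the average-$d$ equality with the uniform lower bound forces every vertex to have degree exactly $d$, so $Q_w$ is simple.

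Since every step is a standard manipulation of $f$- and $h$-vectors together with a single elementary fact about polytopal vertex figures, there is no substantive obstacle. The main thing to check is merely that the Poincar\'e-polynomial/$h$-polynomial identification of \cite{lee-poincare} carries palindromicity to $h_k = h_{d-k}$ in the expected way; once that is confirmed, the argument above finishes the proof.
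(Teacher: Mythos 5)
Your proof is correct and follows essentially the same approach as the paper: both reduce to the elementary observation that palindromicity of the $h$-vector forces $2f_1 = d f_0$, which together with the universal lower bound $\deg(v) \geq d$ at every vertex of a $d$-polytope pins down all vertex degrees to be exactly $d$, i.e.\ $Q_w$ is simple. The only cosmetic difference is in the easy direction: you invoke the Dehn--Sommerville relations on the polytope side, while the paper cites Poincar\'e duality for the smooth variety $Y_w$ directly; these are equivalent once the $h$-polynomial/Poincar\'e-polynomial identification from \cite{lee-poincare} (which does rest on the directional simplicity established in Theorem~\ref{thm:bip-is-directionally-simple}) is in place, as you correctly flag.
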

\begin{proof}
Suppose $Q_w$ is $d$-dimensional. Since $Q_w$ is directionally simple (see Theorem~\ref{thm:bip-is-directionally-simple}), by \cite[Thm.~2.7]{lee-poincare} the Poincar\'{e} polynomial of $Y_w$ has coefficients $h_0,h_1,\ldots, h_d$. Suppose that this sequence is palindromic.

By (\ref{eq:f-and-h-vectors}) the number of vertices of $Q_w$ is $f_0=\sum_{k=0}^d h_k$ and the number of edges of $Q_w$ is $f_1=\sum_{k=0}^d k \cdot h_k$. Since $h$ is assumed to be palindromic, we in fact have:
\begin{align}
    f_0 &= \begin{cases} 2 \sum_{k=0}^{\frac{d-1}{2}} h_k, \text{ $d$ odd} \\ 2 \sum_{k=0}^{\frac{d}{2}-1} h_k + h_{\frac{d}{2}}, \text{ $d$ even}. \end{cases} \\
    f_1 &= \begin{cases} d \sum_{k=0}^{\frac{d-1}{2}} h_k, \text{ $d$ odd} \\ d \sum_{k=0}^{\frac{d}{2}-1} h_k + \frac{d}{2} h_{\frac{d}{2}}, \text{ $d$ even}. \end{cases}
\end{align}
In particular, we have $f_1=\frac{d}{2}f_0$. Since all vertices are incident to at least $d$ edges in a $d$-dimensional polytope, this implies that in fact all vertices are incident to exactly $d$ edges, so $Q_w$ is in fact simple, which by \cite[Thm.~1.2]{lee-conjecture} implies that $Y_w$ is smooth. The converse is immediate, since if $Y_w$ is smooth its cohomology satisfies Poincar\'{e} duality.
\end{proof}

\section{Vertex-degree monotonicity}
\label{sec:monotone}

In Section~\ref{sec:lattice} we applied properties of the relation $c \xdashrightarrow{u} d$ to prove that $P_w$ is a lattice. In this section we use more refined information about the relation $c \xRightarrow{u} d$ (see Section~\ref{sec:arrows-properties}) to prove that vertex-degrees of $Q_w$ are monotonic with respect to the partial order $\leq_w$; as an application, we resolve a conjecture of Lee--Masuda \cite[Conj.~7.17]{lee-conjecture} characterizing smooth generic torus orbit closures in Schubert varieties.

Write $\deg_w(\bs{u})$ for the number of edges of $Q_w$ incident to the vertex $\bs{u}$.

\begin{theorem}
\label{thm:degree-monotonicity}
Let $w \in S_n$. If $u \leq_w v$ then $\deg_w(\bs{u}) \leq \deg_w(\bs{v})$.
\end{theorem}

Theorem~\ref{thm:degree-monotonicity} will follow from the stronger Theorem~\ref{thm:injection} below.

\begin{cor}
\label{cor:simple-conjecture}
Let $w \in S_n$, then the polytope $Q_w$ is simple if and only if it is simple at the vertex $\bs{w}$. 
\end{cor}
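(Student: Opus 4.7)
The plan is to deduce the corollary from Theorem~\ref{thm:degree-monotonicity} together with the trivial lower bound on vertex degrees in a polytope. The forward direction (simple implies simple at $\bs{w}$) is immediate from the definition, so everything rests on the converse.

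First I would observe that $w$ is the unique maximum element of the poset $P_w$. This is because $P_w$ refines Bruhat order on $[e,w]$ in the sense that edges of $Q_w$ are Bruhat covers (by the Tsukerman--Williams result cited in the introduction), so any element of $[e,w]$ sits weakly below $w$ in $\leq_w$; alternatively one can invoke the isomorphism $P_w \cong (\Top_w([e,w]),\leq_R)$ from Corollary~\ref{cor:bip-poset-is-induced-weak} together with the fact that $\Top_w(w)=w_0(J)$ or similar. Either way, $u \leq_w w$ for every $u \in [e,w]$.

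Next, let $d=\dim Q_w$. Since $Q_w$ is a polytope of dimension $d$, every vertex is incident to at least $d$ edges, so $\deg_w(\bs{u}) \geq d$ for all $u \in [e,w]$. Assuming $Q_w$ is simple at $\bs{w}$ means $\deg_w(\bs{w}) = d$. By Theorem~\ref{thm:degree-monotonicity} applied to $u \leq_w w$, we obtain
\[
d \leq \deg_w(\bs{u}) \leq \deg_w(\bs{w}) = d,
\]
so $\deg_w(\bs{u})=d$ for every $u$, which is exactly simplicity of $Q_w$.

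There is no real obstacle here; the substantive content has already been packed into Theorem~\ref{thm:degree-monotonicity}, and the corollary is just the observation that if the unique degree-maximizing vertex attains the universal lower bound $d$, then every vertex must attain that bound. The only thing I would double check while writing is that $w$ really is the top element of $P_w$ (as opposed to merely a maximal element), which follows cleanly from Corollary~\ref{cor:bip-poset-is-induced-weak} since $\Top_w$ is an order isomorphism onto a subposet of right weak order and $\Top_w(w)$ dominates $\Top_w(u)$ for every $u \in [e,w]$.
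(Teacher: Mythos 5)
Your proof is correct and follows the same strategy as the paper: both reduce the corollary to Theorem~\ref{thm:degree-monotonicity} plus the fact that $w$ is the top element of $P_w$. The only cosmetic difference is at the bottom of the sandwich: the paper observes that $Q_w$ is always simple at $\bs{e}$ (immediate from Proposition~\ref{prop:edges-from-E} and the description of $E_w(e)$), so $\deg_w(\bs{e}) = \dim Q_w$ closes the trap from below, while you instead invoke the universal fact that every vertex of a $d$-polytope has degree at least $d$; both are trivial and the argument is identical in structure. One small logical slip: ``$P_w$ refines Bruhat order in the sense that cover relations of $P_w$ are Bruhat covers'' does not by itself give $u \leq_w w$ for all $u \in [e,w]$ --- it only shows $w$ is \emph{maximal} in $P_w$, not the unique maximum, since $\leq_w$ is weaker, not stronger, than Bruhat order on $[e,w]$. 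Your second justification via Corollary~\ref{cor:bip-poset-is-induced-weak} is the right one: one checks $\Top_w(w) = w_0$ (every edge of $\Gamma_w(w)$ points from a larger label to a smaller), so $\Top_w(w) \geq_R \Top_w(u)$ for all $u \preceq w$ and hence $u \leq_w w$; the paper itself takes this fact for granted in the proof of Proposition~\ref{prop:join-of-atoms-is-top}.
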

\begin{proof}
It is clear from Proposition~\ref{prop:edges-from-E} and the definition of $E_w(e)$ that $Q_w$ is always simple at the vertex $\bs{e}$. Thus if $Q_w$ is also simple at $\bs{w}$, Theorem~\ref{thm:degree-monotonicity} imples that it is simple at every vertex.
\end{proof}

Corollary~\ref{cor:simple-conjecture} resolves Conjecture 7.17 of Lee--Masuda \cite{lee-conjecture}. As described in \cite[Cor.~7.13]{lee-conjecture}, Corollary~\ref{cor:simple-conjecture} has the following geometric interpretation.

\begin{cor}
\label{cor:smooth-conjecture}
Let $Y_w$ be a generic torus orbit closure in the Schubert variety $X_w\coloneqq \overline{BwB/B}$, then $Y_w$ is smooth if and only if it is smooth at the torus fixed point $wB$.
\end{cor}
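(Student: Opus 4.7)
The plan is to reduce Corollary~\ref{cor:smooth-conjecture} directly to Corollary~\ref{cor:simple-conjecture} via the geometric dictionary between the polytope $Q_w$ and the toric variety $Y_w$ established in \cite{lee-conjecture}. The generic torus orbit closure $Y_w$ is (up to normalization issues that do not affect smoothness at torus fixed points) a toric variety whose moment polytope is $Q_w$; the torus fixed points of $Y_w$ are precisely the points $uB$ for $u \in [e,w]$, corresponding bijectively to the vertices $\bs{u}$ of $Q_w$.

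Concretely, I would first invoke the result of Lee--Masuda that translates smoothness of $Y_w$ at a fixed point $uB$ into simplicity of $Q_w$ at the corresponding vertex $\bs{u}$: namely, $Y_w$ is smooth at $uB$ iff $\bs{u}$ is incident to exactly $\dim Q_w$ edges (equivalently, iff the graph $\Gamma_w(u)$ is a tree, which is the combinatorial criterion recorded in \cite{lee-conjecture}). Consequently, $Y_w$ is globally smooth iff every fixed point $uB$ is a smooth point iff every vertex $\bs{u}$ of $Q_w$ is simple iff $Q_w$ is a simple polytope. In particular, smoothness of $Y_w$ at $wB$ is equivalent to simplicity of $Q_w$ at the vertex $\bs{w}$.

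With this dictionary in hand, the corollary is immediate from Corollary~\ref{cor:simple-conjecture}: if $Y_w$ is smooth at $wB$ then $Q_w$ is simple at $\bs{w}$, hence by Corollary~\ref{cor:simple-conjecture} the polytope $Q_w$ is simple at every vertex, hence $Y_w$ is smooth at every torus fixed point, and since smoothness of a toric variety can be detected at its torus fixed points, $Y_w$ is smooth. The converse is trivial.

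There is no real obstacle here; the entire content of Corollary~\ref{cor:smooth-conjecture} is packaged in the combinatorial Corollary~\ref{cor:simple-conjecture} together with the already-established correspondence from \cite{lee-conjecture}. The only care needed is to cite \cite{lee-conjecture} precisely for the pointwise equivalence ``smooth at $uB$ $\Leftrightarrow$ simple at $\bs{u}$'' so that one is entitled to localize the statement at the single vertex $\bs{w}$ rather than phrasing it only globally.
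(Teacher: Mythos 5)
Your proposal is correct and follows essentially the same route the paper takes: the paper simply observes that Corollary~\ref{cor:smooth-conjecture} is the geometric interpretation of Corollary~\ref{cor:simple-conjecture} via the Lee--Masuda dictionary (smoothness of $Y_w$ at $uB$ $\Leftrightarrow$ simplicity of $Q_w$ at $\bs{u}$, equivalently $\Gamma_w(u)$ being a tree), and you have unpacked precisely that argument, including the standard observation that smoothness of the torus orbit closure may be checked at its fixed points.
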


Write $c \xLeftrightarrow{u} d$ if $c \xLeftarrow{u} d$ or $d \xRightarrow{u} c$ (note that we never have both $c \xLeftarrow{u} d$ and $d \xRightarrow{u} c$).

\begin{theorem}
\label{thm:injection}
Let $w \in S_n$ and suppose $u \lessdot_w v = tu$ with $c \xLeftrightarrow{u} d$, then there is a unique edge $\Vec{e}$ of $E_w(v)$ described by $c \xLeftrightarrow{v} d$ or $t(c) \xLeftrightarrow{v} t(d)$. Moreover, the map
\[
\ph:E_w(u) \to E_w(v)
\]
sending the edge $c \xLeftrightarrow{u} d$ to $\Vec{e}$ is an injection.
\end{theorem}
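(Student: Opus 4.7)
The plan is to establish the theorem in three parts: uniqueness of the target edge (U), existence of the target edge (E), and injectivity of the map $\ph$ (I). Both (U) and (I) will follow quickly from the no-triangles property of Proposition~\ref{prop:gamma-graph-basic-properties}(ii), while (E) will require a case analysis driven by Proposition~\ref{prop:v-paths-to-u-paths} together with commutation arguments in Bruhat order.

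For (U) and (I), note that $\{c,d\}$ and $\{t(c),t(d)\}$ differ only when exactly one of $c,d$ lies in $\{a,b\}$. Without loss of generality (since the situation is symmetric under swapping $a \leftrightarrow b$), assume $\{c,d\} = \{a,x\}$ with $x \notin \{a,b\}$, so that $\{t(c),t(d)\} = \{b,x\}$. For (U): if both $\{a,x\}$ and $\{b,x\}$ were in $E_w(v)$, then together with the given edge $(b,a) \in E_w(v)$ they would form a triangle on $\{a,b,x\}$ in $\Gamma_w(v)$, contradicting Proposition~\ref{prop:gamma-graph-basic-properties}(ii). For (I): a collision $\ph(\{c_1,d_1\}) = \ph(\{c_2,d_2\})$ with distinct inputs forces $\{c_1,d_1\} = t(\{c_2,d_2\})$, hence $\{c_1,d_1\} = \{a,x\}$ and $\{c_2,d_2\} = \{b,x\}$; together with $(a,b) \in E_w(u)$ this produces a triangle in $\Gamma_w(u)$, again contradicting Proposition~\ref{prop:gamma-graph-basic-properties}(ii).

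For (E), I split on $k := |\{c,d\} \cap \{a,b\}|$. The case $k = 2$ is immediate from $(b,a) \in E_w(v)$. For $k = 0$, I claim $\{c,d\} \in E_w(v)$: since $(ab)$ and $(cd)$ act on disjoint positions and hence commute, $(cd)v = (ab)(cd)u$ has length differing from $\ell(v)$ by exactly $1$ (by the decoupled length-calculation for commuting reflections), and a subword-of-reduced-word argument---combining independent insertions of an ``$(ab)$-letter'' and a ``$(cd)$-letter'' into a reduced word for $u$ sitting inside one for $w$---gives $(cd)v \preceq w$. Hence $(c,d) \in \tilde{E}_w(v)$. Transitive-reduction survival then follows by contradiction: an intermediate $\xdashrightarrow{v}$-path from $c$ to $d$ through some $x \neq c,d$ would lift, via Proposition~\ref{prop:v-paths-to-u-paths} applied segment by segment, to an intermediate $\xdashrightarrow{u}$-path of length at least two, contradicting $(c,d) \in E_w(u)$.

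The remaining case $k = 1$ is the main obstacle. Take $c = a$ and $d \notin \{a,b\}$ (the other sub-cases being symmetric). Here one must determine which of $\{a,d\}$ or $\{b,d\}$ lies in $E_w(v)$, and the answer depends on the relative positions of $a, b, d$ in $u$---specifically, on whether $u^{-1}(b) < u^{-1}(d)$ or $u^{-1}(d) < u^{-1}(b)$. In each sub-case I directly verify the Bruhat-cover and length-$\pm 1$ conditions for exactly one of $(ad)v$ and $(bd)v$, using the flattening property (Proposition~\ref{prop:flattening-bruhat}) to reduce to three-letter sub-permutations on $\{a,b,d\}$, in the style of Lemmas~\ref{lem:notes-lemma-4}--\ref{lem:new-path-goes-through-a-b}. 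Transitive-reduction survival again follows via Proposition~\ref{prop:v-paths-to-u-paths}, completing the existence argument. The careful bookkeeping in this case---verifying precisely one of the two candidate edges survives the Bruhat-cover condition---is the most delicate part of the argument.
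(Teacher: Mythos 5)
Your (U) and (I) arguments are correct and match the paper's Lemmas~\ref{lem:at-most-one-edge} and \ref{lem:injective}. The difficulty, as you anticipate, is existence (E), and there the proposal has two genuine gaps, both in the $k=0$ case.

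First, the claim that $\ell((cd)v)$ differs from $\ell(v)$ by exactly $1$ ``by the decoupled length-calculation for commuting reflections'' is false. Commuting transpositions do not have decoupled length increments: $\ell((cd)u)-\ell(u)$ depends on which values between $\min(c,d)$ and $\max(c,d)$ sit in positions between $u^{-1}(c)$ and $u^{-1}(d)$, and this set changes when $a,b$ swap positions. Concretely, if the values interleave as $a<c<b<d$ and $u^{-1}(c) < u^{-1}(a) < u^{-1}(d) < u^{-1}(b)$, then $(cd)u$ can be a Bruhat cover of $u$ while $(cd)v$ is \emph{not} a Bruhat cover of $v=(ab)u$, because $v(u^{-1}(a))=b\in(c,d)$ now lies between positions $u^{-1}(c)$ and $u^{-1}(d)$. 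The reason your conclusion is nevertheless true is not a local length computation but a global structural fact: by Theorem~\ref{thm:bip-is-directionally-simple}, the two upward edges at $\bs{u}$ span a $2$-dimensional face of $Q_w$, and Williams's classification (Theorem~\ref{thm:2-face-classification}) shows that a $2$-face whose two edges at the bottom vertex carry disjoint transpositions must be a square, with labels $(ij),(kl)$ satisfying $i<j<k<l$ --- the interleaved pattern $a<c<b<d$ simply cannot arise. This is precisely where the paper's existence proof for upward edges lives, and it is not recoverable from commutation alone. (The downward $k=0$ case needs its own argument, Lemma~\ref{lem:square-downward-edge}, for the same reason.)

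Second, your ``transitive-reduction survival'' step is incomplete even where the edge survives in $\tilde E_w(v)$. You propose to lift a would-be intermediate path $c \xdashrightarrow{v} x \xdashrightarrow{v} d$ by applying Proposition~\ref{prop:v-paths-to-u-paths} ``segment by segment,'' but that proposition has explicit exclusions: part (ii) does not apply to a segment terminating at $a$ (or starting at $b$). When the intermediate vertex $x$ equals $a$ or $b$ --- which is exactly the interesting situation --- the segments do not satisfy the hypotheses, and one cannot conclude $c \xdashrightarrow{u} x$ and $x \xdashrightarrow{u} d$. Handling this is the entire content of the case analysis in Lemma~\ref{lem:square-downward-edge} and Lemmas~\ref{lem:downward-hex-face}--\ref{lem:downward-trap-edge2}. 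A minor additional point: $(ab)$ and $(cd)$ act on disjoint \emph{values}, not disjoint positions, though they do commute.

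Your $k=1$ sketch is closer in spirit to what the paper actually does for downward edges, but ``in the style of Lemmas~\ref{lem:notes-lemma-4}--\ref{lem:new-path-goes-through-a-b}'' is not yet a proof, and the same transitive-reduction issue recurs there. In short: uniqueness and injectivity are fine; for existence you will need the $2$-face classification of \cite{Williams} (for upward edges) and a careful analysis of paths through $a,b$ (for transitive reduction and for downward edges), neither of which follows from the commutation or segment-lifting arguments as written.
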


Theorem~\ref{thm:degree-monotonicity} follows from Theorem~\ref{thm:injection} since, by Proposition~\ref{prop:edges-from-E} we have $\deg_w(\bs{u})=\left| E_w(u) \right|$ for all $u \preceq w$.

\subsection{Proof of Theorem~\ref{thm:injection}}

\subsubsection{Injectivity}
\label{sec:proof-injective}

\begin{lemma}
\label{lem:at-most-one-edge}
In the setting of Theorem~\ref{thm:injection}, at most one edge of $E_w(v)$ is described by $c \xLeftrightarrow{v} d$ or $t(c) \xLeftrightarrow{v} t(d)$.
\end{lemma}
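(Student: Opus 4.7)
The plan is to split according to how $\{c,d\}$ intersects $\{a,b\}$ and, in the only nontrivial case, derive a contradiction from the transitive reduction defining $E_w(v)$ together with the edge $b \xRightarrow{v} a$ guaranteed by the hypothesis $u \lessdot_w v = (ab)u$ with $a<b$ (here $t=(ab)$).

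The trivial cases are $\{c,d\} \cap \{a,b\} = \emptyset$ and $\{c,d\} = \{a,b\}$: in both, the transposition $t$ preserves the set $\{c,d\}$, so $c \xLeftrightarrow{v} d$ and $t(c) \xLeftrightarrow{v} t(d)$ literally denote the same potential edge of $E_w(v)$, and the conclusion is automatic.

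The only remaining case, up to swapping the roles of $c$ and $d$, is $c \in \{a,b\}$ and $d \notin \{a,b\}$, in which case $t(c)$ is the other element of $\{a,b\}$ and $t(d)=d$. I would proceed by contradiction, assuming both $c \xLeftrightarrow{v} d$ and $t(c) \xLeftrightarrow{v} d$ lie in $E_w(v)$; equivalently, both $a$ and $b$ are linked to $d$ by edges in $E_w(v)$. This leaves four orientation subcases according to the directions of these two edges. In each I would read off the induced inequalities on $v^{-1}$ and look for a length-two directed path in $\tilde\Gamma_w(v)$ that is bypassed by one of the three edges $a \xLeftrightarrow{v} d$, $b \xLeftrightarrow{v} d$, and $b \xRightarrow{v} a$ already assumed to lie in $E_w(v)$. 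Concretely: the two subcases in which both assumed edges point into $d$ (resp.\ both out of $d$) each let $b \xRightarrow{v} a$ compose with one of them to shortcut the other, directly violating transitive reduction; the subcase $a \xRightarrow{v} d$ and $d \xRightarrow{v} b$ forces $v^{-1}(a)<v^{-1}(b)$, contradicting $b \xRightarrow{v} a$; and the subcase $d \xRightarrow{v} a$ and $b \xRightarrow{v} d$ produces the path $b \xrightarrow{v} d \xrightarrow{v} a$ in $\tilde\Gamma_w(v)$, contradicting that $b \xRightarrow{v} a$ is transitively reduced.

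The main obstacle here is purely bookkeeping across the four orientation subcases; each check is immediate from the definitions of $\tilde\Gamma_w(v)$ and its transitive reduction, and nothing from Section~\ref{sec:lattice} or the more delicate results of Section~\ref{sec:gamma-graphs} (such as Proposition~\ref{prop:v-paths-to-u-paths}) is required.
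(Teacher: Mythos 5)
Your proposal is correct and follows essentially the same approach as the paper: the same case split on $\left|\{c,d\}\cap\{a,b\}\right|$, and in the nontrivial case a contradiction from the fact that both asserted edges, together with the $E_w(v)$ edge joining $a$ and $b$ (which exists because $u \lessdot_w v=(ab)u$), would form a triangle in $\Gamma_w(v)$. The only difference is stylistic: the paper invokes Proposition~\ref{prop:gamma-graph-basic-properties}(ii) (``no triangles of any orientation'') in one line, whereas you unroll that proposition into the four orientation subcases and verify each directly from acyclicity and transitive reduction.
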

\begin{proof}
Write $t=(ab)$ with $a<b$. If $\left|\{a,b\} \cap \{c,d\} \right| \neq 1$, then $\{c,d\}=\{t(c),t(d)\}$ so the conditions $c \xLeftrightarrow{v} d$ and $t(c) \xLeftrightarrow{v} t(d)$ are the same, and clearly we can have $c \xRightarrow{v} d$ or $d \xRightarrow{v} c$ but not both. If $\left| \{a,b\} \cap \{c,d\} \right|=1$, suppose for example that $b=c$, so $t(c)=a$ and $t(d)=d$. Suppose we had both edges, then since $u \lessdot_w v=tu$ we would have
\[
b \xRightarrow{v} a=t(c) \xLeftrightarrow{v} t(d)=d \xLeftrightarrow{v} c=b,
\]
contradicting the fact that $\Gamma_w(v)$ has no triangles by Proposition~\ref{prop:gamma-graph-basic-properties}. The other cases are analogous.
\end{proof}

In Sections~\ref{sec:proof-downward} and \ref{sec:proof-upward} below we show that an edge in $E_w(v)$ described by $c \xLeftrightarrow{v} d$ or $t(c) \xLeftrightarrow{v} t(d)$ does in fact exist, so that $\ph:E_w(u) \to E_w(v)$ is well-defined. It remains to check that $\ph$ is injective.

\begin{lemma}
\label{lem:injective}
In the setting of Theorem~\ref{thm:injection}, the map $\ph:E_w(u) \to E_w(v)$ is injective.
\end{lemma}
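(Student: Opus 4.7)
My plan for the proof of Lemma~\ref{lem:injective} is to argue by contradiction using the triangle-free property of $\Gamma_w(u)$ from Proposition~\ref{prop:gamma-graph-basic-properties}(ii). Since $\tilde{\Gamma}_w(u)$ has no directed cycles by Proposition~\ref{prop:gamma-graph-basic-properties}(i), the directed edge of $E_w(u)$ between any two fixed vertices is unique when it exists, so I will work with the underlying unordered vertex sets of edges throughout.

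Suppose two distinct edges of $E_w(u)$, with unordered vertex sets $\{c,d\}$ and $\{c',d'\}$, both map under $\ph$ to the same edge of $E_w(v)$, say with vertex set $\{e,f\}$. By the construction of $\ph$ in the statement of Theorem~\ref{thm:injection}, each of $\{c,d\}$ and $\{c',d'\}$ must be equal to either $\{e,f\}$ or $\{t(e),t(f)\}$. If both pairs coincide with the same one of these two sets then the two edges are equal (by the uniqueness noted above), contradicting distinctness. So after relabeling I may assume $\{c,d\} = \{e,f\}$ and $\{c',d'\} = \{t(e),t(f)\}$, and that these two unordered sets are distinct.

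For $t = (ab)$ to alter the set $\{e,f\}$, we must have $\lvert \{e,f\} \cap \{a,b\} \rvert = 1$, so after further relabeling I may take $\{c,d\} = \{a,x\}$ and $\{c',d'\} = \{b,x\}$ for some $x \notin \{a,b\}$. The key observation is that $E_w(u)$ now contains three edges among the three vertices $\{a,b,x\}$: the cover-edge $a \xRightarrow{u} b$ (supplied by the assumption $u \lessdot_w v = (ab)u$ via Proposition~\ref{prop:edges-from-E}), the edge between $a$ and $x$, and the edge between $b$ and $x$. Regardless of the orientations of the latter two edges, these three edges form a triangle in $\Gamma_w(u)$, contradicting Proposition~\ref{prop:gamma-graph-basic-properties}(ii).

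The only aspect requiring care is the case-handling in the relabeling step—verifying that the roles of $c,d$ versus $c',d'$ and of $a$ versus $b$ can indeed be interchanged freely—but since the triangle argument is insensitive to edge orientations, this is routine bookkeeping rather than a genuine obstacle. No combinatorial input is needed beyond the two parts of Proposition~\ref{prop:gamma-graph-basic-properties} and the already-established existence clause of Theorem~\ref{thm:injection}.
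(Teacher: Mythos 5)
Your proof is correct and takes essentially the same approach as the paper: both argue by contradiction, reducing to the situation where the two preimage edges are $\{a,x\}$ and $\{b,x\}$ for some $x \notin \{a,b\}$, and then observing that together with the cover-edge $a \xRightarrow{u} b$ these form a triangle in $\Gamma_w(u)$, contradicting Proposition~\ref{prop:gamma-graph-basic-properties}(ii). Your version simply spells out the relabeling case analysis in slightly more detail than the paper's ``without loss of generality.''
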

\begin{proof}
Write $t=(ab)$ with $a<b$. Suppose $\ph((c,d))=\ph((c',d'))=(i,j) \in E_w(v)$ with $(c,d) \neq (c',d')$. Without loss of generality we have
\[
\{c,d\}=\{i,j\}=\{t(c'),t(d')\}. 
\]
Since $\{t(c'),t(d')\}=\{c,d\} \neq \{c',d'\}$ we must have $\left| \{c',d'\} \cap \{a,b\} \right|=1$. Assume for example that $c'=a$ (the other cases being analogous) which implies that $t(c')=b$ and $t(d')=d'$. Then we have 
\[
a \xRightarrow{u} b=t(c') \xLeftrightarrow{u} t(d') = d' \xLeftrightarrow{u} c'=a,
\]
contradicting the fact that $\Gamma_w(u)$ has no triangles by Proposition~\ref{prop:gamma-graph-basic-properties}.
\end{proof}

\subsubsection{Upward edges}
\label{sec:proof-upward}
When the edge in Theorem~\ref{thm:injection} is an upward edge $c \xRightarrow{u} d$ with $c<d$, the result will follow from the following classification of 2-dimensional faces in Bruhat interval polytopes, due to Williams \cite{Williams}.

\begin{theorem}[Thm.~5.1 of \cite{Williams}]
\label{thm:2-face-classification}
A 2-dimensional face of a Bruhat interval polytope is either a square, trapezoid, or regular hexagon, with labels as in Figure~\ref{fig:2-faces}.
\end{theorem}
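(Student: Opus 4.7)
The plan is to use the fact, established by Kodama--Williams \cite{kodama-williams}, that $Q_w$ is a generalized permutohedron, together with the Tsukerman--Williams result that each face of $Q_w$ is itself a Bruhat interval polytope of the form $\conv\{\bs{z} \mid u \preceq z \preceq v\}$ for some sub-interval $[u,v] \subseteq [e,w]$. Let $F$ be a $2$-dimensional face of $Q_w$. Each edge of $F$ is parallel to a positive root $e_i - e_j$ because the normal fan of $Q_w$ coarsens the braid arrangement, and since $F$ is $2$-dimensional the collection of edge directions of $F$ spans a rank-$2$ subspace. Consequently these directions come from a rank-$2$ sub-root-system of $A_{n-1}$, and the only such subsystems are of type $A_1 \times A_1$ (generated by a pair of disjoint transpositions) or of type $A_2$ (generated by $(ab), (bc)$ with $a<b<c$).

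I would then handle the two cases separately. In the $A_1 \times A_1$ case all edges lie in one of two orthogonal root directions; since $F$ is a $2$-dimensional generalized permutohedron, opposite edges are parallel and of equal length, so $F$ is a parallelogram. By Proposition~\ref{prop:edges-from-E} the two edges incident to the bottom vertex $\bs{u}$ correspond to commuting transpositions $(ab)$ and $(cd)$, yielding vertex labels $\bs{u}$, $\bs{(ab)u}$, $\bs{(cd)u}$, $\bs{(ab)(cd)u}$. In the $A_2$ case, applying the Flattening Property for Bruhat order (Proposition~\ref{prop:flattening-bruhat}) to the coordinates $\{a,b,c\}$ reduces the problem to classifying $2$-dimensional Bruhat interval polytopes arising from sub-intervals of $S_3$. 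A direct enumeration of the intervals of $S_3$ yields exactly the regular hexagon (the full interval $[e,w_0]$) and four isomorphic isoceles trapezoids, corresponding to the intervals $[e,s_1s_2]$, $[e,s_2s_1]$, $[s_1,w_0]$, and $[s_2,w_0]$.

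The vertex labels for a hexagonal face can then be read off by applying the three transpositions on the chosen coordinates and walking around the boundary via the braid relation $(ab)(bc)(ab) = (ac)(bc)(ac) = (bc)(ab)(bc)$, beginning at the bottom vertex $\bs{u}$; the trapezoid labels are obtained by truncating this hexagonal cycle at the opposite vertex-pair absent from the sub-interval.

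The main obstacle is confirming that the flattening reduction in the $A_2$ case produces only these shapes. One must rule out ``exotic'' $2$-dimensional polygons (such as triangles or pentagons) that are not the interval polytope of any Bruhat sub-interval of $S_3$. This is where the Tsukerman--Williams face description is essential: because $F = \conv\{\bs{z} \mid u \preceq z \preceq v\}$ for an actual interval, after flattening the interval $[u,v]$ sits inside $[e, w_0] \subseteq S_3$, and the enumeration above is exhaustive. A secondary subtlety is determining \emph{which} opposite vertex-pair of the hexagon collapses for each trapezoid type; this is dictated by the Lifting Property for Bruhat order, which controls exactly which reflection moves keep one inside the interval $[u, v]$.
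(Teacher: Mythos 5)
This theorem is cited from Williams's paper \cite{Williams}; the present paper gives no proof of it, so there is no in-paper argument against which to compare your proposal. I will therefore assess your argument on its own terms.

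Your overall strategy is sound: use the Tsukerman--Williams fact that every face of $Q_w$ is itself a Bruhat interval polytope $Q_{u,v}$, observe that edge directions of a generalized permutohedron are roots, note that a two-dimensional span of roots in type $A_{n-1}$ carries a rank-two subsystem of type $A_1\times A_1$ or $A_2$, and in the $A_2$ case flatten to $S_3$ (justified by the fact that all vertex vectors then agree outside a fixed $3$-element coordinate set, which translates via $\bs{z}=(z^{-1}(1),\ldots,z^{-1}(n))$ into agreement of the permutations at a fixed set of positions, so Proposition~\ref{prop:flattening-bruhat} applies) and enumerate the five relevant intervals $[e,s_1s_2]$, $[e,s_2s_1]$, $[s_1,w_0]$, $[s_2,w_0]$, $[e,w_0]$.

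Two places want more care. First, in the $A_1\times A_1$ case you assert that ``opposite edges are parallel and of equal length'' as though this were a generic feature of generalized permutohedra; what you actually need (and can easily show) is that any convex polygon whose edges lie in only two directions is automatically a parallelogram, because the edge vectors must sum to zero and alternate directions around the boundary. It is then separate work to identify the four vertices with $\{\bs{u},\bs{(ab)u},\bs{(cd)u},\bs{(ab)(cd)u}\}$ via Proposition~\ref{prop:edges-from-E} and the rank-two interval structure of $[u,v]$. Second, and more seriously, your argument establishes only that the square face comes from a pair of \emph{disjoint} transpositions $(ab)$ and $(cd)$; it does not recover the ordering constraint $i<j<k<l$ asserted in Figure~\ref{fig:2-faces}, nor the trapezoid/hexagon ordering constraints on $i,j,k$, beyond saying the trapezoids are ``isomorphic.'' Since the statement you are asked to prove explicitly includes ``with labels as in Figure~\ref{fig:2-faces},'' these index-ordering claims are part of the content, and your proposal neither derives them nor explains why they hold. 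You would need either an additional argument ruling out nested or crossing pairs $(ab),(cd)$ for faces of a lower-interval polytope $Q_w$, or an explicit case analysis of which flattened $S_3$ and $S_2\times S_2$ configurations can actually arise, with the induced labelings.
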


\begin{figure}
    \centering
    \begin{tikzpicture}[scale=0.8]
    \node at (0,0) {$\bullet$};
    \node at (-1,1) {$\bullet$};
    \node at (1,1) {$\bullet$};
    \node at (0,2) {$\bullet$};
    \draw (0,0) -- (-1,1) node [near start, label=left:$(ij)$] {};
    \draw (0,0) -- (1,1) node [near start, label=right:$(kl)$] {};
    \draw (0,2) -- (-1,1) node [near start, label=left:$(kl)$] {};
    \draw (0,2) -- (1,1) node [near start, label=right:$(ij)$] {};
    \node at (3,0) {$\bullet$};
    \node at (3,1.41) {$\bullet$};
    \node at (4.22,2.12) {$\bullet$};
    \node at (5.44,1.41) {$\bullet$};
    \draw (3,0)--(3,1.41) node [midway, label=left:$(jk)$] {};
    \draw (3,1.41)--(4.22,2.12) node [near start, label=above:$(ij)$] {};
    \draw (4.22,2.12)--(5.44,1.41) node [near start, label=right:$(jk)$] {};
    \draw (5.44,1.41)--(3,0) node [near end, label=right:$(ik)$] {};
    \node at (-3,2.12) {$\bullet$};
    \node at (-3,.71) {$\bullet$};
    \node at (-4.22,0) {$\bullet$};
    \node at (-5.44,.71) {$\bullet$};
    \draw (-3,2.12)--(-3,.71) node [midway, label=right:$(ij)$] {};
    \draw (-3,.71)--(-4.22,0) node [near start, label=below:$(jk)$] {};
    \draw (-4.22,0)--(-5.44,.71) node [near start, label=left:$(ij)$] {};
    \draw (-5.44,.71)--(-3,2.12) node [near end, label=left:$(ik)$] {};
    \hspace{0.2in}
    \node at (8.5,0) {$\bullet$};
    \node at (7,.7) {$\bullet$};
    \node at (7,2.11) {$\bullet$};
    \node at (8.5,2.82) {$\bullet$};
    \node at (10,.7) {$\bullet$};
    \node at (10,2.11) {$\bullet$};
    \draw (8.5,0)--(7,.7) node [near start, label=left:$(ij)$] {};
    \draw (7,.7)--(7,2.11) node [midway, label=left:$(jk)$] {};
    \draw (7,2.11)--(8.5,2.82) node [near end, label=left:$(ij)$] {};
    \draw (8.5,2.82)--(10,2.11) node [near start, label=right:$(jk)$] {};
    \draw (10,2.11)--(10,.7) node [midway, label=right:$(ij)$] {};
    \draw (10,.7)--(8.5,0) node [near end, label=right:$(jk)$] {};
    \end{tikzpicture}
    \caption{The possible 2-dimensional faces of a Bruhat interval polytope, according to Theorem~\ref{thm:2-face-classification}; the reflections of these about a vertical line are also possible. The edge labels indicate the reflection which sends one endpoint to the other. For the square, we must have $i<j<k<l$; for the trapezoids, we must have $i<j<k$ or $k<j<i$; for the hexagon, we must have $i<j<k$.}
    \label{fig:2-faces}
\end{figure}
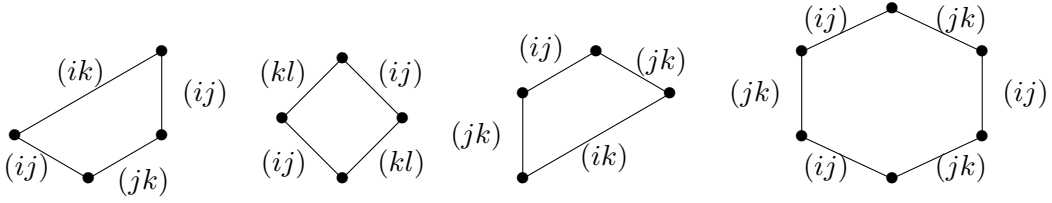

Indeed, in this case the two upward edges incident to $\bs{u}$, coming from $u \lessdot_w v$ and $c \xRightarrow{u} d$ span a 2-dimensional face of $Q_w$, by Theorem~\ref{thm:bip-is-directionally-simple}. Then, viewing $\bs{u}$ as the bottom vertex of one of the faces in Figure~\ref{fig:2-faces} and $\bs{v}$ as one of the vertices covering it, it is easy to verify that in each case Theorem~\ref{thm:injection} holds.

\subsubsection{Downward edges}
\label{sec:proof-downward}
In the previous section we established the well-definedness of the map in Theorem~\ref{thm:injection} when the edge $c \xLeftrightarrow{u} d$ was an upward edge, meaning it points from a smaller index to a larger. In this section, we cover the downward edges; this requires a more careful analysis because it is no longer the case that there is some 2-dimensional face of $Q_w$ containing the edges under consideration.

Throughout this section, let $w \in S_n$ and suppose that $u \lessdot_w v = (ab)u$, with $a<b$. 


\begin{lemma}
\label{lem:square-downward-edge}
Suppose that $d \xRightarrow{u} c$ where $c<d$ and $a,b,c,d$ are distinct. Then $d \xRightarrow{v} c$.
\end{lemma}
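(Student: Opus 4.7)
The plan is to verify the lemma in two stages: first confirming $(d, c) \in \tilde{E}_w(v)$, and then confirming this edge survives transitive reduction to yield $(d, c) \in E_w(v)$. For Stage 1, since $\{c, d\} \cap \{a, b\} = \emptyset$, the positions of $c$ and $d$ agree in $u$ and $v$, so $v^{-1}(d) < v^{-1}(c)$ is automatic, and it suffices to show $\ell((dc) v) = \ell(v) - 1$, from which $(dc) v \precdot v \preceq w$ and the length condition follow. The sequence of values at positions strictly between $u^{-1}(d)$ and $u^{-1}(c)$ in $v$ agrees with that in $u$ except that any occurrence of $a$ is replaced by $b$ and vice versa. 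The cover $(dc) u \lessdot u$ ensures that no value in the open interval $(c, d)$ occupies such a position in $u$, so the only possible obstructions are two ``bad'' scenarios: (B1) $u^{-1}(a)$ lies strictly between $u^{-1}(d)$ and $u^{-1}(c)$ while $b \in (c, d)$, or (B2) the symmetric situation with the roles of $a$ and $b$ swapped. I rule out (B1) as follows: the cover $(dc) u \lessdot u$ forces $a \notin (c, d)$, which combined with $a < b < d$ yields $a < c < b < d$; then $u \lessdot v$ forces $u^{-1}(c) \notin (u^{-1}(a), u^{-1}(b))$, which combined with $u^{-1}(a) < u^{-1}(c)$ places $u^{-1}(b)$ strictly between $u^{-1}(d)$ and $u^{-1}(c)$, and since $b \in (c,d)$ this contradicts $(dc)u \lessdot u$. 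Scenario (B2) is ruled out by a mirror argument.

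For Stage 2, I argue by contradiction: assume there is an $m \neq d, c$ with $d \xdashrightarrow{v} m \xdashrightarrow{v} c$, and produce a length-$\geq 2$ path from $d$ to $c$ in $\tilde{\Gamma}_w(u)$, contradicting $(d, c) \in E_w(u)$. If $m \notin \{a, b\}$, Proposition~\ref{prop:v-paths-to-u-paths} applies to both segments (the exclusion clauses hold since $c, d, m \notin \{a, b\}$), immediately giving $d \xdashrightarrow{u} m \xdashrightarrow{u} c$. If $m \in \{a, b\}$, I pick neighbors $p \xrightarrow{v} m \xrightarrow{v} q$ of $m$ on the path and use Lemma~\ref{lem:c-to-d-through-a} or Lemma~\ref{lem:c-to-d-through-b} to produce $p \xdashrightarrow{u} q$, then transfer the remaining segments $d \xdashrightarrow{v} p$ and $q \xdashrightarrow{v} c$ using Proposition~\ref{prop:v-paths-to-u-paths} (with Lemma~\ref{lem:notes-lemma-4} handling any boundary edges incident to $a$ or $b$). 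In each sub-case I verify that the resulting path in $\tilde{\Gamma}_w(u)$ passes through a vertex distinct from both $d$ and $c$, and hence has length at least $2$.

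The main obstacle is expected to be Stage 1, whose case analysis requires carefully interleaving the positional constraints from the two covers $u \lessdot_w v$ and $(dc) u \lessdot u$; Stage 2, although it branches into subcases, is essentially mechanical bookkeeping on top of the lemmas already established in Section~\ref{sec:gamma-graphs}.
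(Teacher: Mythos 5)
Your proposal is correct and takes essentially the same route as the paper: the heart of both arguments is to show that any intermediate vertex on a $d$--$c$ path in $\tilde{\Gamma}_w(v)$ must lie in $\{a,b\}$ (via Proposition~\ref{prop:v-paths-to-u-paths}), and then to rule out each such case by producing a $d$--$c$ path of length $\geq 2$ in $\tilde{\Gamma}_w(u)$, contradicting $d \xRightarrow{u} c$. A few comparative remarks. Your Stage~1 (ruling out (B1) and (B2)) is correct and is more careful than the paper, which simply asserts $\ell((cd)v)=\ell(v)-1$ without justification; note, however, that this stage can be bypassed entirely by observing $(cd)v \prec v \preceq w$ and invoking Proposition~\ref{prop:reflection-gives-gamma-path} to get $d \xdashrightarrow{v} c$, after which ruling out intermediate vertices simultaneously forces this path to have length one and to survive transitive reduction. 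In Stage~2, where you appeal to Lemmas~\ref{lem:c-to-d-through-a} and \ref{lem:c-to-d-through-b} plus a verification that the transferred $u$-path visits some vertex outside $\{c,d\}$, the paper instead redoes an explicit case analysis on the four-letter restriction of $v$; both are sound, and your verification does go through because every branch of those lemmas (when both endpoints avoid $\{a,b\}$, as they do here) routes the produced $u$-path through $a$ or $b$.
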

\begin{proof}
We have $\ell((cd)v)=\ell(v)-1$, since no value between $c$ and $d$ occurs between them in $v$, for otherwise the same would be true in $u$. Thus $d \xrightarrow{v} c$, so if we are not to have $d \xRightarrow{v} c$, then there must be an index $i$ with $d \xdashrightarrow{v} i \xrightarrow{v} c$. We must have $i \in \{a,b\}$, otherwise by Proposition~\ref{prop:v-paths-to-u-paths} we would have $d \xdashrightarrow{u} i \xdashrightarrow{u} c$, contradicting $d \xRightarrow{u} c$. Consider two cases: $i=a$ or $i=b$.

Suppose $i=a$, so $i=a \xdashrightarrow{u} c$ by Proposition~\ref{prop:v-paths-to-u-paths}. If $d<a$ then we have $d \xdashrightarrow{u} a \xdashrightarrow{u} c$ by Proposition~\ref{prop:v-paths-to-u-paths}, a contradiction, so assume $d>a$. There are two possibilities for $v$ (omitting ellipses): $v=bdac$ or $v=dbac$. If $v=dbac$ then $u=dabc$ so again $d \xdashrightarrow{u} a \xdashrightarrow{u} c$, so assume $v=bdac$ and $u=adbc$. Since multiplication by $(ab)$ and $(cd)$ both give lower Bruhat covers of $v$, and since $(cd)$ gives a lower cover of $u$, we must have $a<b<c<d$. But then we have $d \xdashrightarrow{u} b \xdashrightarrow{u} c$, both by Proposition~\ref{prop:reflection-gives-gamma-path}. This again contradicts $d \xRightarrow{u} c$.

Suppose now that $i=b$, so $d \xdashrightarrow{u} b=i$ by Proposition~\ref{prop:v-paths-to-u-paths}. If $b<c$ then we are done since $d \xdashrightarrow{u} b \xdashrightarrow{u} c$ again by Proposition~\ref{prop:v-paths-to-u-paths}, so assume $b>c$. There are two possibilities for $v$ (omitting ellipses): $v=dbac$ or $v=dbca$. If $v=dbac$ then $u=dabc$ so $b \xdashrightarrow{u} c$, again a contradiction. Thus we must have $v=dbca$ and $u=dacb$, and the known Bruhat covers of $u,v$ imply that $c<d<a<b$. But then by Proposition~\ref{prop:v-paths-to-u-paths} we have $d \xdashrightarrow{u} a$ (since $d \xdashrightarrow{v} b \xrightarrow{v} a$) and we have $a \xdashrightarrow{u} c$ by Proposition~\ref{prop:reflection-gives-gamma-path}, again contradicting $d \xRightarrow{u} c$.
\end{proof}

\begin{lemma}
\label{lem:downward-hex-face}
Suppose that $c \xRightarrow{u} a$, with $a<c$, then $c \xRightarrow{v} a$ or $c \xRightarrow{v} b$.
\end{lemma}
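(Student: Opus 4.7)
The plan is to exploit the single configuration forced by the hypotheses. Since $u \lessdot_w v = (ab)u$ with $a<b$, the values $a,b$ sit at positions $p_a < p_b$ in $u$ with $u(p_a)=a, u(p_b)=b$, and are swapped in $v$. Since $c \xRightarrow{u} a$ requires $c$ to precede $a$ in $u$, we place $c$ at some position $p_c < p_a$. Thus at the three positions $p_c < p_a < p_b$, the one-line notations read $u = \ldots c \ldots a \ldots b \ldots$ and $v = \ldots c \ldots b \ldots a \ldots$, and this is the only configuration to consider.

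My first step is to verify that the direct edge $c \xrightarrow{v} a$ always lies in $\tilde{E}_w(v)$. The length-one condition demands that no value in $(a,c)$ occur at any position in $(p_c, p_b)$ of $v$. On $(p_c, p_a)$ this follows from $c \xRightarrow{u} a$; at $p_a$, $v(p_a) = b > c$ is outside $(a,c)$; and on $(p_a, p_b)$, the hypothesis $a \xRightarrow{u} b$ (which holds because $u \lessdot_w v$ corresponds to $(a,b) \in E_w(u)$) forbids any value in $(a,b) \supseteq (a,c)$. The Bruhat condition $(ac)v \preceq w$ then follows from $v \preceq w$ via the flattening property (Proposition~\ref{prop:flattening-bruhat}) applied to $\{a,c,b\}$: relabelling $a<c<b$ as $1<2<3$, $(ac)v$ flattens to $132$ and $v$ to $231$, and $132 \preceq 231$ in $S_3$.

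Next, either $c \xrightarrow{v} a$ is already a cover in $E_w(v)$ and the first alternative of the lemma holds, or there is a length-$\geq 2$ path $c = i_0 \to \cdots \to i_r = a$ in $\tilde{\Gamma}_w(v)$, and by acyclicity (Proposition~\ref{prop:gamma-graph-basic-properties}) the penultimate vertex $i = i_{r-1}$ avoids $\{c,a\}$. I claim $i = b$. Indeed, if $i \neq b$, Proposition~\ref{prop:v-paths-to-u-paths} delivers $c \xdashrightarrow{u} i$ (using $c \neq b$ and $i \neq a$), and a case analysis on the position of $i$ in $v$ (before $p_a$ versus strictly between $p_a$ and $p_b$), combined with the length-one condition for the terminal edge $i \xrightarrow{v} a$ and the forbidden range $(a,b)$ at positions $(p_a, p_b)$ imposed by $a \xRightarrow{u} b$, pins down the value $i$ so tightly that a flattening argument on the triple $\{a,i,b\}$ produces a corresponding directed path from $i$ to $a$ in $\tilde{\Gamma}_w(u)$; this contradicts $c \xRightarrow{u} a$. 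So $i = b$, meaning every nontrivial path $c \xdashrightarrow{v} a$ in $\tilde{\Gamma}_w(v)$ factors through the edge $b \xRightarrow{v} a$, and in particular $c \xdashrightarrow{v} b$. A parallel analysis then promotes the direct edge $c \xrightarrow{v} b$ to a cover in $E_w(v)$ (any obstructing intermediate would again, via Proposition~\ref{prop:v-paths-to-u-paths} and flattening, contradict $c \xRightarrow{u} a$), yielding the second alternative $c \xRightarrow{v} b$.

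The main obstacle is the subcase $i > a$, $i \neq b$ in the argument above: Proposition~\ref{prop:v-paths-to-u-paths}(ii) does not literally apply to the terminal edge $i \xrightarrow{v} a$ because its endpoint is $a$. The resolution requires splitting on whether the position $p'$ of $i$ in $v$ lies before $p_a$ or between $p_a$ and $p_b$, and in each subcase combining the length-one condition at $p_a$ (or $p_b$) with the constraint $a \xRightarrow{u} b \in E_w(u)$ to force $i$ into a narrow range (namely $a < i < b$ when $p' < p_a$, and $i > b$ when $p_a < p' < p_b$) where Proposition~\ref{prop:flattening-bruhat} can directly place $(ia)u$ beneath $(ia)v$ in Bruhat order, delivering the needed directed path in $\tilde{\Gamma}_w(u)$.
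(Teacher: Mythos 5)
Your approach is genuinely different from the paper's. The paper passes through the auxiliary element $x=(ac)u \lessdot_w u$, shows $c\xRightarrow{x}b$, and then invokes directional simplicity (Theorem~\ref{thm:bip-is-directionally-simple}) to produce a $2$-face of $Q_w$ spanned by the two edges out of $\bs{x}$; the classification of $2$-faces (Theorem~\ref{thm:2-face-classification}) then identifies the remaining edge at $\bs{v}$ as one of the two alternatives. Your proposal instead works entirely inside $\Gamma_w(v)$ and tries to pull back obstructing paths via Proposition~\ref{prop:v-paths-to-u-paths}. Unfortunately the proposal contains a genuine gap in the subcase you yourself flag as the main obstacle.

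In the subcase where the penultimate vertex $i$ satisfies $i>a$, $i \neq b$, and $p_a < p' < p_b$ (where $p'=v^{-1}(i)$), you correctly conclude $i>b$ from $a\xRightarrow{u}b$, but you then assert a flattening inequality $(ia)u \preceq (ia)v$ and deduce a path $i\xdashrightarrow{u}a$. Both steps fail. Since $u=(ab)v$, in $u$ the value $a$ sits at position $p_a$ and $i$ sits at position $p'>p_a$; thus $a$ precedes $i$ in $u$, and because every directed edge of $\tilde{\Gamma}_w(u)$ runs from an earlier to a later position, there is no directed path $i\xdashrightarrow{u}a$ at all --- the target state you need cannot exist. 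Moreover the flattening inequality is reversed: on the positions $p_a,p',p_b$ (with $a<b<i$), $(ia)u$ reads $i,a,b$ (pattern $312$) and $(ia)v$ reads $b,a,i$ (pattern $213$), so by Proposition~\ref{prop:flattening-bruhat} one gets $(ia)v \prec (ia)u$, not the other way around. So this subcase is not eliminated by your argument, and the claim ``the penultimate vertex must be $b$'' is unproven.

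There is a secondary gap at the end. Even assuming every path from $c$ to $a$ in $\tilde{\Gamma}_w(v)$ passes through $b$, this only yields $c\xdashrightarrow{v}b$ (a directed path), not $c\xRightarrow{v}b$: if that path has length at least $2$, the pair $(c,b)$ is \emph{not} in the transitive reduction $E_w(v)$, so $c\xRightarrow{v}b$ would be false. You also never verify that the single edge $c\xrightarrow{v}b$ lies in $\tilde{E}_w(v)$; indeed $(cb)v \succ v$ and $(cb)v \succ (ac)u$, so $(cb)v \preceq w$ does not follow from the hypotheses and can fail. The paper's route through the $2$-face classification sidesteps both of these difficulties.
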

\begin{proof}

We have $c \xRightarrow{u} a \xRightarrow{u} b$, so (omitting ellipses) $u=cab$ and $v=cba$.

Suppose first that $c \xdashrightarrow{v} b$; if we are not to have $c \xRightarrow{v} b$, then it must be that $c \xdashrightarrow{v} i \xdashrightarrow{v} b$ for some $i$. Thus $v=ciba$ and $u=ciab$. By Proposition~\ref{prop:v-paths-to-u-paths} we have $c\xdashrightarrow{u} i$. If $i>a$, then by Proposition~\ref{prop:reflection-gives-gamma-path} we have $i\xdashrightarrow{u} a$, contradicting $c \xRightarrow{u} a$, thus $i<a$. But $i \xdashrightarrow{v} b \xrightarrow{v} a$, so by Proposition~\ref{prop:v-paths-to-u-paths} we again obtain $i \xdashrightarrow{u} a$.

Now assume $c \not \xdashrightarrow{v} b$; in particular, this means that $c<b$. We have $c \xdashrightarrow{v} a$ by Proposition~\ref{prop:reflection-gives-gamma-path}, so if we are not to have $c \xRightarrow{v} a$, it must be that $c \xrightarrow{v} i \xdashrightarrow{v} a$ for some $i \neq b$. By Proposition~\ref{prop:v-paths-to-u-paths} we have $c \xdashrightarrow{u} i$. If $i<a$, then $i \xdashrightarrow{u} a$, contradicting $c \xRightarrow{u} a$. Thus $i>a$. If $v=cbia$ and $u=caib$, we must in fact have $i>b$, since $(ab)$ is a Bruhat cover. Since $c \xrightarrow{v} i$, we have that $(ci)v=ibca \preceq w$. Thus $(ai)u=ciab \preceq w$, since $ciab \prec ibca$. But by Proposition~\ref{prop:reflection-gives-gamma-path} this implies that $a \xdashrightarrow{u} i \xdashrightarrow{u} b$, contradicting $a \xRightarrow{u} b$. Finally, if instead we have $v=ciba$ and $u=ciab$, then $c \xdashrightarrow{u} i \xdashrightarrow{u} a$ by Proposition~\ref{prop:reflection-gives-gamma-path}, contradicting $c \xRightarrow{u} a$. 
\end{proof}

\begin{lemma}
\label{lem:downward-trap-edge-1}
Suppose $c \xRightarrow{u} b$, with $a<b<c$, then $c \xRightarrow{v} a$.
\end{lemma}
\begin{proof}
There are two possibilities for $u$ (omitting ellipses): $u=acb$ or $u=cab$. The latter cannot occur, since by Proposition~\ref{prop:reflection-gives-gamma-path} we would have $c \xdashrightarrow{u} a \xRightarrow{u} b$, contradicting $c \xRightarrow{u} b$, thus $u=acb$ and $v=bca$. 

We have $c \xdashrightarrow{v} a$ by Proposition~\ref{prop:reflection-gives-gamma-path}, so if we are not to have $c \xRightarrow{v} a$, it must be that $c \xdashrightarrow{v} i \xdashrightarrow{v} a$ for some $i$. Thus $v=bcia$ and $u=acib$. By Proposition~\ref{prop:v-paths-to-u-paths}, we have $c \xdashrightarrow{u} i$; we must have $i<b$, otherwise we would have $i \xdashrightarrow{u} b$, contradicting $c \xRightarrow{u} b$. We cannot have $a<i<b$, since $(ab)$ is a Bruhat cover, so $i<a$. But now Proposition~\ref{prop:v-paths-to-u-paths} implies that $i \xdashrightarrow{u} a$, impossible since $u^{-1}(a)<u^{-1}(i)$.
\end{proof}


\begin{proof}[Proof of Theorem~\ref{thm:injection}]
Let $w \in S_n$ and suppose $u \lessdot_w v = (ab)u$ with $a<d$ and that $c \xRightarrow{u} d$. We first argue that at least one edge described by $c \xLeftrightarrow{v} d$ or $t(c) \xLeftrightarrow{v} t(d)$ exists in $E_w(v)$.

The case of an upward edge ($c<d$) was covered in Section~\ref{sec:proof-upward}, so suppose that $c>d$. There are five cases to consider:
\begin{itemize}
    \item[(i)] $a,b,c,d$ are distinct,
    \item[(ii)] $d=a$,
    \item[(iii)] $d=b$,
    \item[(iv)] $c=a$,
    \item[(v)] $c=b$.
\end{itemize}

Conjugation by $w_0$ is an automorphism of Bruhat order (see \cite[Prop.~2.3.4]{bjorner-brenti}), it follows from the definitions that $\tilde{\Gamma}_{w_0ww_0}(w_0uw_0)$ and $\Gamma_{w_0ww_0}(w_0uw_0)$ can be obtained from $\tilde{\Gamma}_{w}(u)$ and $\Gamma_{w}(u)$ respectively by relabelling the vertices according to $i \mapsto n+1-i$ and then reversing all edge directions. Cases (ii) and (iii) correspond under this symmetry to (v) and (iv), respectively, so we only need consider (i),(ii), and (iii). These cases are covered by Lemmas~\ref{lem:square-downward-edge}, \ref{lem:downward-hex-face}, and \ref{lem:downward-trap-edge-1} respectively.

We have shown that at least one edge described by $c \xLeftrightarrow{v} d$ or $t(c) \xLeftrightarrow{v} t(d)$ exists in $E_w(v)$. Lemma~\ref{lem:at-most-one-edge} implies that at most one exists. Together this means that the map $\ph:E_w(u) \to E_w(v)$ is well-defined, and it is injective by Lemma~\ref{lem:injective}.
\end{proof}

\section*{Acknowledgements}
I wish to thank Lauren Williams, Alexander Postnikov, Nathan Reading, Patricia Hersh, Chris Fraser, Grant Barkley, Allen Knutson, Vic Reiner, and Richard Stanley for helpful conversations. I am also grateful to the anonymous referees for their excellent suggestions, and in particular for the corrections and expository improvements to the proof of Theorem~\ref{thm:injection} that they provided.

\bibliographystyle{plain}
\bibliography{bip2}

\begin{thebibliography}{10}

\bibitem{arkani-hamed_bourjaily_cachazo_goncharov_postnikov_trnka_2016}
Nima Arkani-Hamed, Jacob Bourjaily, Freddy Cachazo, Alexander Goncharov,
  Alexander Postnikov, and Jaroslav Trnka.
\newblock {\em Grassmannian Geometry of Scattering Amplitudes}.
\newblock Cambridge University Press, 2016.

\bibitem{barkley2023combinatorial}
Grant~T. Barkley and Christian Gaetz.
\newblock Combinatorial invariance for elementary intervals, 2023.
\newblock arxiv:2303.15577 [math.CO].

\bibitem{billey-parabolic}
Sara~C. Billey, C.~Kenneth Fan, and Jozsef Losonczy.
\newblock The parabolic map.
\newblock {\em J. Algebra}, 214(1):1--7, 1999.

\bibitem{bjorner-lattice}
Anders Bj\"{o}rner.
\newblock Orderings of {C}oxeter groups.
\newblock In {\em Combinatorics and algebra ({B}oulder, {C}olo., 1983)},
  volume~34 of {\em Contemp. Math.}, pages 175--195. Amer. Math. Soc.,
  Providence, RI, 1984.

\bibitem{bjorner-brenti}
Anders Bj\"{o}rner and Francesco Brenti.
\newblock {\em Combinatorics of {C}oxeter groups}, volume 231 of {\em Graduate
  Texts in Mathematics}.
\newblock Springer, New York, 2005.

\bibitem{Bjorner-ekedahl}
Anders Bj\"{o}rner and Torsten Ekedahl.
\newblock On the shape of {B}ruhat intervals.
\newblock {\em Ann. of Math. (2)}, 170(2):799--817, 2009.

\bibitem{boretsky-eur-williams}
Jonathan Boretsky, Christopher Eur, and Lauren Williams.
\newblock Polyhedral and tropical geometry of flag positroids, 2022.
\newblock arxiv:2208.09131 [math.CO].

\bibitem{coxeter-matroids}
Alexandre~V. Borovik, I.~M. Gelfand, and Neil White.
\newblock {\em Coxeter matroids}, volume 216 of {\em Progress in Mathematics}.
\newblock Birkh\"{a}user Boston, Inc., Boston, MA, 2003.

\bibitem{bosquet-melou-forest}
Mireille Bousquet-M\'{e}lou and Steve Butler.
\newblock Forest-like permutations.
\newblock {\em Ann. Comb.}, 11(3-4):335--354, 2007.

\bibitem{bump2021matrix}
Daniel Bump and Béatrice Chetard.
\newblock Matrix coefficients of intertwining operators and the bruhat order,
  2021.
\newblock arxiv:2105.13075 [math.RT].

\bibitem{Carrell-normality}
James~B. Carrell and Alexandre Kurth.
\newblock Normality of torus orbit closures in {$G/P$}.
\newblock {\em J. Algebra}, 233(1):122--134, 2000.

\bibitem{Carrell-smooth-point}
James~B. Carrell and Jochen Kuttler.
\newblock Smooth points of {$T$}-stable varieties in {$G/B$} and the {P}eterson
  map.
\newblock {\em Invent. Math.}, 151(2):353--379, 2003.

\bibitem{fraser2020cyclic}
Chris Fraser.
\newblock Cyclic symmetry loci in {G}rasssmannians.
\newblock 2020.
\newblock arxiv:2010.05972 [math.CO].

\bibitem{fpsac-abstract}
Christian Gaetz.
\newblock {B}ruhat interval polytopes, 1-skeleton lattices, and smooth torus
  orbit closures.
\newblock {\em Séminaire Lotharingien de Combinatoire}, 89B, 2023.

\bibitem{Gelfand-serganova}
I.~M. Gel'fand and V.~V. Serganova.
\newblock Combinatorial geometries and the strata of a torus on homogeneous
  compact manifolds.
\newblock {\em Uspekhi Mat. Nauk}, 42(2(254)):107--134, 287, 1987.

\bibitem{Hersh}
Patricia {Hersh}.
\newblock {Posets arising as 1-skeleta of simple polytopes, the nonrevisiting
  path conjecture, and poset topology}.
\newblock February 2018.
\newblock arXiv:1802.04342 [math.CO].

\bibitem{kazhdan--lusztig}
David Kazhdan and George Lusztig.
\newblock Representations of {C}oxeter groups and {H}ecke algebras.
\newblock {\em Invent. Math.}, 53(2):165--184, 1979.

\bibitem{kazhdan--lusztig2}
David Kazhdan and George Lusztig.
\newblock Schubert varieties and {P}oincar\'{e} duality.
\newblock In {\em Geometry of the {L}aplace operator ({P}roc. {S}ympos. {P}ure
  {M}ath., {U}niv. {H}awaii, {H}onolulu, {H}awaii, 1979)}, Proc. Sympos. Pure
  Math., XXXVI, pages 185--203. Amer. Math. Soc., Providence, R.I., 1980.

\bibitem{Klyachko}
A.~A. Klyachko.
\newblock Orbits of a maximal torus on a flag space.
\newblock {\em Funktsional. Anal. i Prilozhen.}, 19(1):77--78, 1985.

\bibitem{kodama-williams}
Yuji Kodama and Lauren Williams.
\newblock The full {K}ostant-{T}oda hierarchy on the positive flag variety.
\newblock {\em Comm. Math. Phys.}, 335(1):247--283, 2015.

\bibitem{Lakshmibai-sandhya}
V.~Lakshmibai and B.~Sandhya.
\newblock Criterion for smoothness of {S}chubert varieties in {${\rm
  Sl}(n)/B$}.
\newblock {\em Proc. Indian Acad. Sci. Math. Sci.}, 100(1):45--52, 1990.

\bibitem{lee-conjecture}
Eunjeong Lee and Mikiya Masuda.
\newblock Generic torus orbit closures in {S}chubert varieties.
\newblock {\em J. Combin. Theory Ser. A}, 170:105143, 44, 2020.

\bibitem{lee-retractions}
Eunjeong Lee, Mikiya Masuda, and Seonjeong Park.
\newblock Torus orbit closures in flag varieties and retractions on {W}eyl
  groups, 2020.
\newblock arxiv:1908.08310 [math.CO].

\bibitem{lee-toric-bip}
Eunjeong Lee, Mikiya Masuda, and Seonjeong Park.
\newblock Toric {B}ruhat interval polytopes.
\newblock {\em J. Combin. Theory Ser. A}, 179:Paper No. 105387, 41, 2021.

\bibitem{lee-survey}
Eunjeong Lee, Mikiya Masuda, and Seonjeong Park.
\newblock Torus orbit closures in the flag variety.
\newblock {\em arXiv preprint arXiv:2203.16750}, 2022.

\bibitem{lee-poincare}
Eunjeong Lee, Mikiya Masuda, Seonjeong Park, and Jongbaek Song.
\newblock Poincar\'{e} polynomials of generic torus orbit closures in
  {S}chubert varieties. {V}. {A}. {R}okhlin-{M}emorial.
\newblock In {\em Topology, geometry, and dynamics}, volume 772 of {\em
  Contemp. Math.}, pages 189--208. Amer. Math. Soc., [Providence], RI, [2021]
  \copyright 2021.

\bibitem{postnikov-reiner-williams}
Alex Postnikov, Victor Reiner, and Lauren Williams.
\newblock Faces of generalized permutohedra.
\newblock {\em Doc. Math.}, 13:207--273, 2008.

\bibitem{postnikov2006total}
Alexander Postnikov.
\newblock Total positivity, grassmannians, and networks.
\newblock {\em arXiv preprint math/0609764}, 2006.

\bibitem{postnikov-beyond}
Alexander Postnikov.
\newblock Permutohedra, associahedra, and beyond.
\newblock {\em Int. Math. Res. Not. IMRN}, (6):1026--1106, 2009.

\bibitem{reading-cambrian-lattices}
Nathan Reading.
\newblock Cambrian lattices.
\newblock {\em Adv. Math.}, 205(2):313--353, 2006.

\bibitem{reading-noncrossing}
Nathan Reading.
\newblock Noncrossing arc diagrams and canonical join representations.
\newblock {\em SIAM J. Discrete Math.}, 29(2):736--750, 2015.

\bibitem{reading-homomorphisms}
Nathan Reading.
\newblock Lattice homomorphisms between weak orders.
\newblock {\em Electron. J. Combin.}, 26(2):Paper No. 2.23, 50, 2019.

\bibitem{reading-speyer}
Nathan Reading and David~E. Speyer.
\newblock Cambrian fans.
\newblock {\em J. Eur. Math. Soc. (JEMS)}, 11(2):407--447, 2009.

\bibitem{Richmond-fiber-bundle}
Edward Richmond and William Slofstra.
\newblock Billey-{P}ostnikov decompositions and the fibre bundle structure of
  {S}chubert varieties.
\newblock {\em Math. Ann.}, 366(1-2):31--55, 2016.

\bibitem{Tsukerman-Williams}
E.~Tsukerman and L.~Williams.
\newblock Bruhat interval polytopes.
\newblock {\em Adv. Math.}, 285:766--810, 2015.

\bibitem{Williams}
Lauren~K. Williams.
\newblock A positive {G}rassmannian analogue of the permutohedron.
\newblock {\em Proc. Amer. Math. Soc.}, 144(6):2419--2436, 2016.

\bibitem{woo-yong-gorenstein}
Alexander Woo and Alexander Yong.
\newblock When is a {S}chubert variety {G}orenstein?
\newblock {\em Adv. Math.}, 207(1):205--220, 2006.

\bibitem{ziegler-lectures}
G\"{u}nter~M. Ziegler.
\newblock {\em Lectures on polytopes}, volume 152 of {\em Graduate Texts in
  Mathematics}.
\newblock Springer-Verlag, New York, 1995.

\end{thebibliography}
\end{document}